\theoremstyle{cupplain}
\newtheorem{theorem}{Theorem}[section]
\newtheorem*{theorem*}{Theorem}
\newtheorem{lemma}[theorem]{Lemma}
\newtheorem{proposition}{Proposition}
\theoremstyle{cupdefinition}
\newtheorem{definition}{Definition}[section]
\theoremstyle{cupremark}
\newtheorem{remark}[theorem]{Remark}
\theoremstyle{cupproof}
\newtheorem{proof}{Proof}
\numberwithin{equation}{section}
\newcommand{\intersection}{\mathop{\bigcap}}
\newcommand{\union}{\mathop{\bigcup}}
\newcommand{\re}{\mathrm{Re}}
\newcommand{\im}{\mathrm{Im}}
\newcommand{\diam}{\mathrm{diam}}
\newcommand{\del}{\partial}
\newcommand{\norm}[1]{\left\lVert#1\right\rVert}
\newcommand{\htop}{h_{\mathrm{top}}}
\newcommand{\SL}{\mathrm{SL}}
\newcommand{\cl}{\mathrm{cl}}
\newcommand\blank{\hphantom{==}}
\newcommand\numberthis{\addtocounter{equation}{1}\tag{\theequation}}
\newcommand{\N}{\mathbb N}
\newcommand{\Z}{\mathbb Z}
\newcommand{\R}{\mathbb R}
\newcommand{\C}{\mathbb C}
\newcommand{\T}{\mathbb T}
\newcommand{\Ss}{\mathcal S}
\newcommand{\Ps}{\mathcal P}
\newcommand{\Ds}{\mathcal D}
\newcommand{\Fs}{\mathcal F}
\newcommand{\Us}{\mathcal U}
\newcommand{\Is}{\mathcal I}
\let\oldphi\phi
\let\phi\varphi
\let\epsilon\varepsilon
\let\tilde\widetilde
\let\hat\widehat
\begin{document}
	
	\begin{Frontmatter}
		
		\title{Thermodynamics of Smooth Models of Pseudo-Anosov Homeomorphisms}
		
		\author{\gname{Dominic} \sname{Veconi}}
		
		\address{\orgdiv{Department of Mathematics}, \orgname{Penn State University}, \orgaddress{\city{University Park}, \state{PA}, \postcode{16802}, \country{USA}}\\ (\email{dveconi@gmail.com})}
	
		\Received{\textup{29} December \textup{2019}}
		\Accepted[ and accepted in revised form]{\textup{7} November \textup{2020}}
		
		\maketitle
		
		\authormark{D. Veconi}
		\titlemark{Thermodynamics of smooth models of pseudo-Anosov homeomorphisms}
		
		\begin{abstract}
			We develop a thermodynamic formalism for a smooth realization of pseudo-Anosov surface homeomorphisms. In this realization, the singularities of the pseudo-Anosov map are assumed to be fixed, and the trajectories are slowed down so the differential is the identity at these points. Using Young towers, we prove existence and uniqueness of equilibrium states for geometric $t$-potentials. This family of equilibrium states includes a unique SRB measure and a measure of maximal entropy, the latter of which has exponential decay of correlations and the Central Limit Theorem.
		\end{abstract}
		
		\keywords{Nonuniform hyperbolicity, pseudo-Anosov diffeomorphisms, thermodynamic formalism, smooth ergodic theory}
		
		\keywords[2020 Mathematics Subject Classification]{\codes[Primary]{37C05, 37C40, 37D25, 37D35}\codes[Secondary]{37A50, 37C86, 37E30}}

	\end{Frontmatter}

	\section[Introduction]{Introduction}
	In \cite{BThursty}, W. Thurston classified linear automomorphisms of the torus into three classes, according to the eigenvalues of the automorphism $A \in \SL(2,\Z)$: 
	\begin{itemize}
		\item Diagonalizable automorphisms with eigenvalues of modulus 1 (rotations);
		\item Nondiagonalizable automorphisms (Dehn twists); 
		\item Automorphisms with eigenvalues of modulus $\neq 1$ (Anosov diffeomorphisms). 
	\end{itemize}
	In this same work, Thurston went on to classify homeomorphisms of any surface up to isotopy class. The principle was quite similar, and is now known as the Nielson-Thurston classification of elements of mapping class groups. This is summarized in the following theorem: 
	\begin{theorem*}
		Let $M$ be a compact orientable surface, and let $f : M \to M$ be a homeomorphism. Then $f$ is isotopic to a homeomorphism $F$ satisfying exactly one of the following three conditions:
		\begin{itemize}
			\item $F$ is a rotation: There is an integer $n$ for which $F^n \equiv \mathrm{Id}$. 
			\item $F$ is reducible: There is a closed curve in $M$ which $F$ leaves invariant. 
			\item $F$ is pseudo-Anosov. 
		\end{itemize}
	\end{theorem*}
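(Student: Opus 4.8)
The plan is to prove this, the Nielsen--Thurston classification, along Thurston's original route through Teichm\"uller theory and the compactification that now bears his name (as presented by Fathi--Laudenbach--Po\'enaru); an alternative is Bers' argument minimizing the Teichm\"uller translation length of the mapping class. First I would dispose of the low-complexity surfaces directly: the sphere, where the mapping class group is trivial so every homeomorphism is a rotation, and the torus, where the statement is exactly the $\SL(2,\Z)$ trichotomy recalled above. So assume $M$ is closed of genus $g\ge 2$ with a hyperbolic metric, let $\Ts(M)$ be its Teichm\"uller space --- a cell of real dimension $6g-6$ with the Teichm\"uller metric $d$ --- and let $f_*$ be the isometry of $\Ts(M)$ induced by $f$. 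The geometric engine will be Thurston's compactification: using the set $\mathcal{S}$ of isotopy classes of essential simple closed curves, one embeds $\Ts(M)$ into $\mathbb{P}\!\left(\R_{\ge 0}^{\mathcal{S}}\right)$ via $X\mapsto \big[(\ell_X(\gamma))_{\gamma\in\mathcal{S}}\big]$ (hyperbolic lengths), and the closure is a closed ball $\overline{\Ts(M)}\cong \overline{B}^{6g-6}$ whose boundary sphere is the space $\mathcal{PMF}(M)$ of projective measured foliations, embedded by geometric intersection numbers $[\mathcal{F}]\mapsto\big[(i(\mathcal{F},\gamma))_{\gamma\in\mathcal{S}}\big]$. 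Since length functions and intersection numbers are natural under homeomorphisms, $f_*$ extends to a homeomorphism of this closed ball.

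Next I would apply the Brouwer fixed point theorem to get a fixed point $p\in\overline{\Ts(M)}$. If $p$ lies in the interior $\Ts(M)$, then $f$ is isotopic to an isometry $F$ of the hyperbolic surface represented by $p$, and since the isometry group of a closed hyperbolic surface is finite, $F^n\equiv\mathrm{Id}$ for some $n$ --- $F$ is a rotation (this is the cyclic case of the Nielsen realization theorem). Otherwise $p=[\mathcal{F}]\in\mathcal{PMF}(M)$, so $F(\mathcal{F})=\lambda\,\mathcal{F}$ as measured foliations for some $\lambda>0$, and I may henceforth assume $f$ is neither periodic nor reducible, with the goal of producing a pseudo-Anosov structure.

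The last step, which I expect to be the crux, is the structural analysis of this invariant foliation. If $\mathcal{F}$ is \emph{not} filling --- it has a closed leaf, or a complementary component other than a disc or once-punctured disc --- then the curves carried by or bounding the non-filling part form an essential multicurve preserved by $F$ up to isotopy, contradicting irreducibility; so $\mathcal{F}$ is filling. Running the same dichotomy for $f_*^{-1}$ (a reducing curve for $f^{-1}$ is one for $f$, and an interior fixed point would make $f$ periodic) produces a filling $[\mathcal{G}]$ with $F(\mathcal{G})=\mu\,\mathcal{G}$; because $\mathcal{F}$ is filling, $i(\mathcal{F},\mathcal{G})>0$, and then $F$-invariance of the intersection pairing forces $\lambda\mu\cdot i(\mathcal{F},\mathcal{G})=i(\mathcal{F},\mathcal{G})$, so $\lambda\mu=1$; moreover $\lambda\ne 1$, since otherwise $F$ would preserve the transverse pair $(\mathcal{F},\mathcal{G})$ on the nose, hence the singular flat metric $|\mathcal{F}|\times|\mathcal{G}|$ they determine, and would be a finite-order isometry of it. Taking $\lambda>1$, the jointly filling transverse measured foliations $(\mathcal{F},\mathcal{G})$ with $F(\mathcal{F})=\lambda^{-1}\mathcal{F}$ and $F(\mathcal{G})=\lambda\,\mathcal{G}$ are by definition a pseudo-Anosov structure for $F$, and the flat metric above exhibits the geometric picture used in the rest of this paper. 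Mutual exclusivity is then easy: a pseudo-Anosov map has no periodic power (the $n$-th has dilatation $\lambda^n\ne 1$) and no invariant curve (iterates of any simple closed curve converge in $\mathcal{PMF}(M)$ to the filling $[\mathcal{G}]$), with the usual convention that a finite-order map is classed as a rotation even when it incidentally fixes a curve. The genuine obstacles are the two facts I would invoke as black boxes: that the length-function map really does compactify $\Ts(M)$ to a ball with boundary $\mathcal{PMF}(M)$ (needing the piecewise-integral-linear structure on measured foliations and a dimension count), and the leaf-by-leaf surgery on an invariant measured foliation that extracts either the reducing multicurve or the transverse partner foliation.
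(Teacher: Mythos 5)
The paper does not prove this theorem; it is quoted in the Introduction as standard background, attributed to Thurston's announcement \cite{BThursty} and to the Nielsen--Thurston classification literature, with the actual proofs appearing in references such as \cite{FLP79}. So there is no internal proof in the paper against which to check you. Your outline follows the standard Thurston/FLP route (compactify $\Ts(M)$ by $\mathcal{PMF}$, apply Brouwer, then case split on where the fixed point lands), and most of the architecture is right.

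There is, however, a genuine gap at the step you flag as the crux, and it is worth naming precisely. Applying Brouwer to $f_*^{-1}$ only produces \emph{some} fixed point in $\overline{\Ts(M)}$; nothing forces it to be a point of $\mathcal{PMF}$ distinct from $[\mathcal{F}]$. In fact the projective class $[\mathcal{F}]$ itself is automatically a fixed point of $f_*^{-1}$, so Brouwer gives you nothing new for free. Your subsequent inference ``$\mathcal{F}$ filling $\Rightarrow i(\mathcal{F},\mathcal{G})>0$'' is also not valid as stated: an arational $\mathcal{F}$ can have $i(\mathcal{F},\mathcal{G})=0$ against a $\mathcal{G}$ that is topologically equivalent to it (same underlying foliation, different transverse measure), which is exactly what happens if $[\mathcal{G}]=[\mathcal{F}]$. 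The standard repairs are either the FLP argument --- take any $\mathcal{H}$ with $i(\mathcal{F},\mathcal{H})>0$, extract a convergent subsequence of $f_*^{-n}[\mathcal{H}]$ in $\mathcal{PMF}$, and show the limit is a fixed class $[\mathcal{G}]\ne[\mathcal{F}]$ with positive intersection --- or the train-track route, where Perron--Frobenius applied to the action of $f$ on a recurrent track carrying $\mathcal{F}$ manufactures both the eigenvalue $\lambda>1$ and the transverse partner $\mathcal{G}$ simultaneously. Either way, the existence and transversality of $\mathcal{G}$ cannot be waved in by a second application of Brouwer; this is precisely where the real work of the classification lives, and your sketch needs to be supplemented there. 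The rest (low-complexity surfaces, Nielsen realization in the interior case, extraction of a reducing multicurve when $\mathcal{F}$ is not filling, and the mutual-exclusivity discussion) is in order, modulo the black boxes you already acknowledge.
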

	Of these three isotopy classes, from a dynamical systems perspective, the pseudo-Anosov maps are the most interesting. The most familiar example of a pseudo-Anosov map is the Arnold ``cat map'' of the two-dimensional torus $\T^2$, which is in fact an Anosov diffeomorphism. No other surface admits an Anosov diffeomorphism, but pseudo-Anosov homeomorphisms of surfaces besides $\T^2$ form an analogy of Anosov maps to other surfaces. Like their Anosov cousins, pseudo-Anosov maps admit a pair of transverse foliations of the state space, and the map uniformly contracts points along the leaves of one foliation and uniformly dilates points along the leaves of the other. In the traditional definition of a pseudo-Anosov homeomorphism (see Section 2), the contraction and dilation factors are constant and inverses of each other, similarly to a hyperbolic toral automorphism such as the cat map. (Accordingly, these maps are often referred to as ``linear pseudo-Anosov maps'', e.g. \cite{GouLinearPA}.) The primary difference between Anosov and pseudo-Anosov maps is the presence of finitely many singularities in the foliations. These are points where three or more leaves of one of the foliations meet at a single point. These leaves are known as ``prongs'' of the singularity. The constant rate of contraction and expansion along the transverse foliations mean the map is globally smooth except at the singularities. Pseudo-Anosov homeomorphisms have found their way into almost every field of geometry, such as Teichm\"uller theory and algebraic geometry. However, the ergodic properties of globally smooth realizations of pseudo-Anosov maps remains a relatively undeveloped area of study. 
	
	In \cite{GerbKatPA}, M. Gerber and A. Katok produced a $C^\infty$ realization of pseudo-Anosov homeomorphisms by slowing down the trajectories near the isolated singularities. The result is a surface diffeomorphism that is uniformly hyperbolic away from a finite set of fixed-point singularities, but whose differential slows down to the identity at these fixed points, thus admitting Lyapunov exponents of zero. These smooth pseudo-Anosov models also admit continuous foliations whose leaves are smooth except at the fixed singular points. Pseudo-Anosov diffeomorphisms constructed in this way are analogues of the one-dimensional Manneville-Pomeau map of the unit interval to compact surfaces of arbitrary genus (see \cite{ManPom}), in that they admit finitely many fixed-point singularities where the differential slows down to the identity, but the map exhibits uniform hyperbolicity away from these singularities. 
	
	To discuss the ergodic properties of these pseudo-Anosov diffeomorphisms, we use techniques and results from thermodynamic formalism. Thermodynamic formalism has been used to study ergodic and geometric properties of several classes of nonuniformly hyperbolic and nonuniformly expanding maps. One objective of thermodynamic formalism is to determine the existence and uniqueness of probability measures known as \emph{Sinai-Ruelle-Bowen (SRB) measures}. These are invariant measures that admit positive Lyapunov exponents almost everywhere, and have absolutely continuous conditional measures on unstable submanifolds (see Section 4). They are also known as ``physical measures'', in the sense that the set of points $x \in M$ for which we have
	\[
	\lim_{n \to \infty} \frac 1 n \sum_{k=0}^{n-1} \phi\big(f^n(x)\big) = \int \phi \, d\mu \quad \textrm{for any } \phi \in C^0(M)
	\]
	has positive measure. More generally, one also may consider \emph{equilibrium measures} for a given potential $\phi \in C^0(M)$. Equilibrium measures are mathematical generalizations of Gibbs distrubtions in statistical physics, which minimize the Helmholtz free energy of a physical system. Within thermodynamic formalism, Helmholtz free energy is replaced with the topological pressure $P_f(\phi) = \sup \left\{ h_\mu(f) + \int\phi\,d\mu : \mu \in \mathcal M_f \right\}$, where $h_\mu(f)$ is the metric entropy of $f$ with respect to $\mu$, and $\mathcal M_f$ is the space of $f$-invariant Borel probability measures on the manifold $M$. Equilibrium measures, in other words, are invariant probability measures that maximize the sum of the metric entropy of $f$ and the space average of $\phi$ with respect to $\mu$. The most important two equilibrium measures are SRB measures (for which the potential is the negative log of the unstable Jacobian, or $\phi_1(x) = -\log\det\left| Df_x|_{E^u(x)}\right|$), and measures of maximal entropy (for which the potential is $\phi_0 \equiv 0$). 
	
	One of the earliest applications of thermodynamic formalism was in studying the ergodic theory of uniformly hyperbolic and Axiom A diffeomorphisms (e.g. \cite{Bowen}). Since then, the theory of thermodynamic formalism has proven useful in other contexts. For example, the one-dimensional Manneville-Pomeau maps $f : [0,1] \to [0,1]$, defined by $f(x) = x(1+ax^{\alpha}) \mod 1$ for $a>0$, $\alpha>0$, have been extensively studied as classic examples of one-dimensional nonuniformly expanding maps (see, e.g., \cite{PolWei99}, as well as \cite{Ven18} for some recent work on the infinite ergodic theory of Manneville-Pomeau maps). Additionally, in \cite{CPZ19}, V. Climenhaga, Y. Pesin, and A. Zelerowicz proved existence of equilibrium measures for a broad class of potential functions in the partially hyperbolic setting. These equilibrium measures include, in particular, a unique measure of maximal entropy and a unique SRB measure. Finally, in \cite{BCS19}, J. Buzzi, S. Crovisier, and O. Sarig showed that any surface diffeomorphism admits at most finitely many ergodic measures of maximal entropy, and that there is a unique such measure in the topologically transitive case. Our results are a special instance of this setting, and develop further statistical and ergodic properties of the measure of maximal entropy and other equilibrium states. 
	
	
	
	
	In this paper, we effect a thermodynamic formalism for these pseudo-Anosov diffeomorphisms. Specifically, given a pseudo-Anosov diffeomorphism $g$ of a compact surface $M$, we consider the family of geometric $t$-potentials $\phi_t(x) = -t\log\left|Dg|_{E^u(x)}\right|$ parametrized by $t \in \R$, where $E^u(x)$ is the stable subspace at the point $x \in M$. Our main result, Theorem \ref{main-theorem}, claims that there is a number $t_0 < 0$ such that for every $t \in (t_0, 1)$, there is a unique equilibrium measure $\mu_t$ for $\phi_t$ that is Bernoulli, has exponential decay of correlations, and satisfies the Central Limit Theorem with respect to a class of functions containing all H\"older continuous functions on $M$. We also show that the pressure function $t \mapsto P_g(\phi_t)$ is real analytic in the open interval $(t_0, 1)$. Since the pseudo-Anosov diffeomorphism $g$ is topologically conjugate to a pseudo-Anosov homeomorphism $f$, their topological entropies agree, and since $f$ has a unique measure of maximal entropy, so does $g$. We denote this measure $\mu_0$, for the potential $\phi_0 \equiv 0$. As a corollary to Theorem \ref{main-theorem}, we obtain a thorough description of the statistical properties of $\mu_0$. Furthermore, we prove that the map $g$ has a unique SRB measure, and we describe its ergodic properties. We emphasize that a phase transition occurs at $t=1$: in addition to the SRB measure, there is a family of ergodic equilibrium measures for $\phi_1$ composed of convex combinations of Dirac measures at the singularities.  
	
	The techniques we employ to establish our results are similar to those used by Y. Pesin, S. Senti, and K. Zhang in \cite{PSZ17} to effect thermodynamic formalism of the Katok map. The latter is an area preserving diffeomorphism of the torus with non-zero Lyapunov exponents. Similarly to the smooth pseudo-Anosov models, the Katok map is obtained by slowing down trajectories near the origin to produce an indifferent fixed point (i.e. a fixed point of the map whose differential is equal to the identity). However, there are substantial differences between the Katok map of the torus and the Gerber-Katok smooth pseudo-Anosov models. These include:
	\begin{itemize}
		\item The Katok map acts on the torus, and thus can be lifted to $\R^2$, while pseudo-Anosov maps do not in general admit a lift to $\R^2$. The lift of the Katok map to $\R^2$ plays an essential role in simplifying the analysis in \cite{PSZ17}, and some adjustments to this argument are required to carry out similar analysis of globally smooth pseudo-Anosov diffeomorphisms. 
		\item The foliations of pseudo-Anosov diffeomorphisms are singular. In particular, the singularities do not admit a locally stable or unstable subspace forming a curve, but rather forming the prongs that meet at the singularity. Furthermore, one cannot use coordinate charts whose interiors contain the singularities if the coordinates correspond to the stable and unstable foliations. Instead, the analysis must be performed in stable and unstable sectors whose vertices are the singularities (see Section 3).
		\item Whereas the slow-down function used to construct the Katok map depends only on the radius of the slowed-down neighborhood, the choice of slow-down function of the pseudo-Anosov homeomorphism depends on the number of prongs of the singularity. This affects the analysis of the behavior of the trajectories near the singularities.
	\end{itemize}

	The development of thermodynamics of the Katok map in \cite{PSZ17} uses the technology of Young diffeomorphisms, which are generalizations of hyperbolic maps. The definition of Young diffeomorphisms relies on hyperbolicity of an induced map on a small subset of the state space with local hyperbolic product structure. This induced map can be carried over to a derived dynamical system on the corresponding Rokhlin tower. The thermodynamics of Young diffeomorphisms have been thoroughly investigated in \cite{PSZTowers} and in \cite{FZTowers}. Young towers have been used to study thermodynamic and ergodic properties of a variety of nonuniformly hyperbolic dynamical systems (see \cite{CP17}), including almost Anosov toral diffeomorphisms (see \cite{me}). 
	
	
	This paper is structured as follows. In Section 2, we define pseudo-Anosov homeomorphisms and discuss some of their dynamical properties, including measure invariance and Markov partitions. In Section 3, we describe the smooth models of pseudo-Anosov homeomorphisms and state some important dynamical and topological properties of these maps. We state our main results in Section 4. Section 5 is devoted to the study of dynamics near the singularities and include some technical calculations needed to prove our main result. Some of these calculations are similar to the ones performed in Section 5 of \cite{PSZ17} but require some modifications and adjustments. Section 6 gives a brief survey of the thermodynamic properties of Young diffeomorphisms and inducing schemes we will be using. Section 7 proves that our smooth models of pseudo-Anosov homeomorphisms are Young diffeomorphisms, and finally Section 8 uses this fact to prove our main results.
	
	\section{Preliminaries}
	We begin with a discussion on measured foliations of a compact two-dimensional $C^\infty$ Riemannian manifold $M$, where we assume $M$ is without boundary. Our exposition is adapted from the presentation in \cite{BaPeNUH}, Section 6.4. For the reader's convenience, we have restated their exposition here and have included additional details and remarks on the notation concerning pseudo-Anosov maps and their behavior.
	

	\begin{definition}
		A \emph{measured foliation with singularities} is a triple $(\Fs, S, \nu)$, where: 
		\begin{itemize}
			\item $S = \{x_1, \ldots, x_m\}$ is a finite set of points in $M$, called \emph{singularities}; 
			\item $\Fs = \tilde \Fs \uplus \Ss$ is a partition of $M$, where $\Ss$ is a partition of $S$ into points and $\tilde \Fs$ is a smooth foliation of $M \setminus S$;
			\item $\nu$ is a \emph{transverse measure}; in other words, $\nu$ is a measure defined on each curve on $M$ transverse to the leaves of $\tilde \Fs$;
		\end{itemize}
		and the triple satisfies the following properties:
		\begin{enumerate} 
			\item There is a finite atlas of $C^\infty$ charts $\oldphi_k : U_k \to \C$ for $k = 1, \ldots, \ell$, $\ell \geq m$. 
			\item For each $k = 1, \ldots, m$, there is a number $p = p(k) \geq 3$ of elements of $\tilde\Fs$ meeting at $x_k \in S$ (these elements are called \emph{prongs} of $x_k$) such that: 
			\begin{enumerate}[label=(\alph*)]
				\item $\oldphi_k(x_k) = 0$ and $\oldphi_k(U_k) = D_{a_k} := \{z \in \C : |z| \leq a_k\}$ for some $a_k > 0$; 
				\item if $C \in \tilde\Fs$, then the components of $C \cap U_k$ are mapped by $\oldphi_k$ to sets of the form 
				\[
				\left\{z \in \C: \im\left(z^{p/2}\right) = \mathrm{constant} \right\} \cap \oldphi_k(U_k);
				\]
				\item the measure $\nu|U_k$ is the pullback under $\oldphi_k$ of $$\left| \im\left(dz^{p/2}\right)\right| = \left| \im\left(z^{(p-2)/2} dz\right)\right|.$$
			\end{enumerate}
			\item For each $k > m$, we have: 
			\begin{enumerate}[label=(\alph*)]
				\item $\oldphi_k(U_k) = (0, b_k) \times (0,c_k) \subset \R^2 \approx \C$ for some $b_k, c_k > 0$; 
				\item If $C \in \tilde\Fs$, then components of $C \cap U_k$ are mapped by $\oldphi_k$ to lines of the form
				\[
				\{z \in \C : \im \,z = \mathrm{constant}\} \cap \oldphi_k(U_k).
				\] 
				\item The measure $\nu|U_k$ is given by the pullback of $|\im \,dz|$ under $\oldphi_k$. 
			\end{enumerate}
		\end{enumerate}
	\end{definition}
	
	An archetypal singularity with $p=3$ prongs is shown in Figure 1. 
	
	
	\begin{remark}
		Henceforth, we refer to the $C^\infty$ curves that are elements of $\Fs$ as ``leaves (of the foliation)''; in particular, despite the technical fact that the singleton sets of singularities $\{x_1\}, \ldots, \{x_k\}$ are elements of $\Fs$, we do not refer to these points when we refer to ``leaves of the foliation''. 
	\end{remark}
	
	\begin{remark} The transverse measure $\nu$ is not a measure on $M$ itself, in the measure-theoretic sense of the word. What $\nu$ is measuring is the ``distance traveled'' transverse to the leaves of the foliation, similarly to how the 1-form $dx$ measures distance traveled transverse to the leaves $\{x=x_0\}$. To make this more explicit, properties (2) and (3) in the above definition ensure that $\nu$ is holonomy-invariant. In particular, if $\gamma$ and $\gamma'$ are isotopic curves in $M \setminus S$ transverse to the leaves of $\Fs$, and the initial points of $\gamma$ and $\gamma'$ lie in the same leaf $\Fs_0$ and the terminal points lie in the same leaf $\Fs_1$, then $\nu(\gamma) = \nu(\gamma')$. 
	\end{remark}
	
	\begin{figure}
		\centering
		\includegraphics[width=0.6\textwidth]{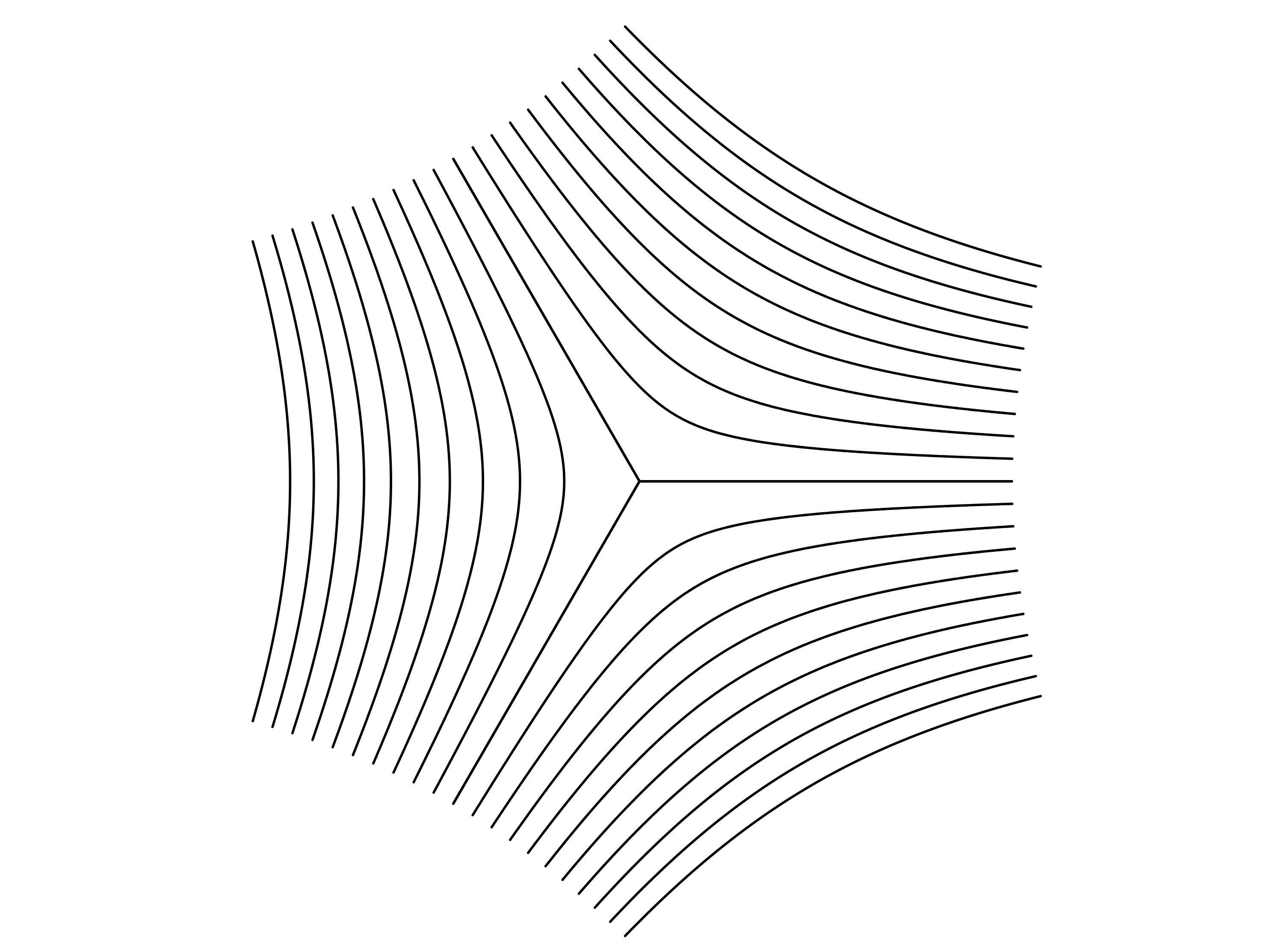}
		\caption{A 3-pronged singularity of a measured foliation with singularities.}
	\end{figure}
	
	
	\begin{definition}\label{PAH-def}
		A surface homeomorphism $f$ of a manifold $M$ is \emph{pseudo-Anosov} if there are measured foliations with singularities $(\Fs^s, S, \nu^s)$ and $(\Fs^u, S, \nu^u)$ (with the same finite set of singularities $S = \{x_1, \ldots, x_m\}$) and an atlas of $C^\infty$ charts $\oldphi_k : U_k \to \C$ for $k = 1, \ldots, \ell$, $\ell > m$, satisfying the following properties: 
		\begin{enumerate}
			\item $f$ is differentiable, except on $S$.
			\item For each $x_k \in S$, $\Fs^s$ and $\Fs^u$ have the same number $p(k)$ of prongs at $x_k$. 
			\item The leaves of $\Fs^s$ and $\Fs^u$ intersect transversally at nonsingular points.
			\item Both measured foliations $\Fs^s$ and $\Fs^u$ are $f$-invariant.
			\item There is a constant $\lambda > 1$ such that 
			\[
			f(\Fs^s, \nu^s) = (\Fs^s, \nu^s/\lambda) \quad \textrm{and} \quad f(\Fs^u, \nu^u) = (\Fs^u, \lambda \nu^u).
			\]
			\item For each $k=1, \ldots, m$, we have $x_k \in U_k$, and $\oldphi_k : U_k \to \C$ satisfies: 
			\begin{enumerate}[label=(\alph*)]
				\item $\oldphi_k(x_k) = 0$ and $\oldphi_k(U_k) = D_{a_k}$ for some $a_k > 0$; 
				\item if $C$ is a curve leaf in $\Fs^s$, then the components of $C \cap U_k$ are mapped by $\oldphi_k$ to sets of the form 
				\[
				\left\{z \in \C: \re\left(z^{p/2}\right) = \mathrm{constant}\right\}\cap D_{a_k};
				\]
				\item if $C$ is a curve leaf in $\Fs^u$, then the components of $C \cap U_k$ are mapped by $\oldphi_k$ to sets of the form 
				\[
				\left\{z \in \C : \im\left(z^{p/2}\right) = \mathrm{constant} \right\} \cap D_{a_k};  
				\]
				\item the measures $\nu^s|U_k$ and $\nu^u|U_k$ are given by the pullbacks of $$\left|\re\left(dz^{p/2}\right)\right| = \left|\re\left(z^{(p-2)/2} dx \right)\right|$$
				and $$ \left|\im\left(dz^{p/2}\right)\right| = \left|\im\left(z^{(p-2)/2} dx \right)\right|$$
				under $\oldphi_k$, respectively. 
			\end{enumerate}
			\item For each $k > m$, we have: 
			\begin{enumerate}[label=(\alph*)]
				\item $\oldphi_k(U_k) = (0, b_k) \times (0,c_k) \subset \R^2 \approx \C$ for some $b_k, c_k > 0$; 
				\item If $C$ is a curve leaf in $\Fs^s$, then components of $C \cap U_k$ are mapped by $\oldphi_k$ to lines of the form
				\[
				\{z \in \C : \re \,z = \mathrm{constant}\} \cap \oldphi_k(U_k);
				\] 
				\item If $C$ is a curve leaf in $\Fs^u$, then components of $C \cap U_k$ are mapped by $\oldphi_k$ to lines of the form
				\[
				\{z \in \C : \im \,z = \mathrm{constant}\} \cap \oldphi_k(U_k);
				\] 
				\item the measures $\nu^s|U_k$ and $\nu^u|U_k$ are given by the pullbacks of $|\re\,dz|$ and $|\im \,dz|$ under $\oldphi_k$, respectively. 
			\end{enumerate}
		\end{enumerate}
		For $k = 1, \ldots, m$, we call the neighborhood $U_k \subset M$ described in part (6) of this definition a \emph{singular neighborhood}, and for $k > m$, we call $U_k$ a \emph{regular neighborhood}. (See Figure 2.)
	\end{definition} 
	
	\begin{figure}
		\centering
		\includegraphics[width=0.6\textwidth]{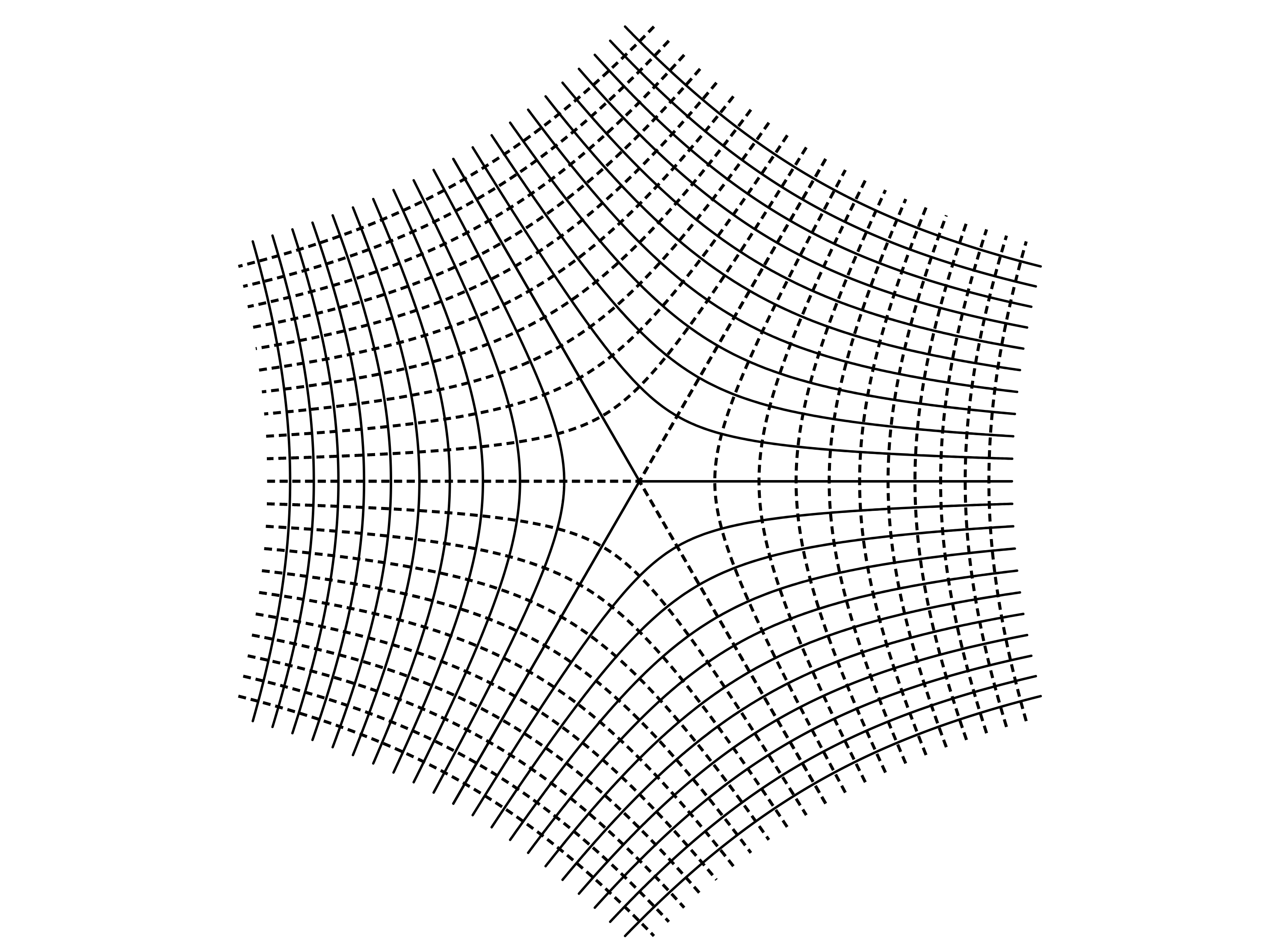}
		\caption{A singular neighborhood with a 3-pronged singularity. The solid lines and broken lines respectively represent the stable and unstable foliations $\Fs^s$ and $\Fs^u$, for example.}
	\end{figure}
	
	\begin{remark}\label{PAH uniformly hyperbolic}
		The notation $f(\Fs^u, \nu^u) = (\Fs^u, \lambda\nu^u)$ means two things. First, it means that if $\gamma$ is a subset of a leaf of $\Fs^u$, then so is $f(\gamma)$, and in particular, so is $f^{-1}(\gamma)$. Second, it means if $\gamma$ is an open interval in $\Fs^s$, or more generally any arc in $M$ transverse to the foliation $\Fs^u$, then $\nu^u\left(f^{-1}(\gamma)\right) = \lambda \nu^u(\gamma)$. That is, $f_* \nu^u = \lambda\nu^u$, with $f_* \nu^u$ the pushforward transverse measure. Likewise for the notation $f(\Fs^s, \nu^s) = (\Fs^s, \nu^s/\lambda)$. So points on the same $\Fs^s$-leaf contract in the $\nu^u$-measure by a factor of $\lambda$, and points on the same $\Fs^u$-leaf dilate in the $\nu^s$-measure by a factor of $\lambda$. 
	\end{remark}
	
	\begin{remark}
		Since $f$ is a homeomorphism, $f$ permutes the singularities; that is, the singular set $S$ is $f$-invariant. However, our arguments assume the singularities are fixed under the pseudo-Anosov homeomorphism. If the singularities are not fixed points, one could consider an appropriate iterate of $f$ and study the dynamics of this iterate, arriving at the same results. 
	\end{remark}
	
	We state a few important properties of pseudo-Anosov homeomorphisms we will use over the course of our arguments. 
	
	\begin{proposition}\label{PAH-differential}
		Let $f : M \to M$ be a pseudo-Anosov homeomorphism. For $x \in M \setminus S$, $T_x M = T_x \Fs^s(x) \oplus T_x \Fs^u(x)$, and in these coordinates, $Df_x(\xi^s, \xi^u) = \left( \xi^s/\lambda, \lambda \xi^u\right)$, where $\xi^s$ and $\xi^u$ are nonzero vectors in $T_x \Fs^s(x)$ and $T_x \Fs^u(x)$, $\Fs^s(x)$ and $\Fs^u(x)$ represent the curve containing $x$ in the respective foliation, and $\lambda$ is the dilation factor for $f$. 
	\end{proposition}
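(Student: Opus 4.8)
The plan is to read off all three assertions directly from Definition~\ref{PAH-def}, using only transversality of the two foliations (item~(3)), $f$-invariance of each foliation (item~(4)), and the scaling law for the transverse measures (item~(5), as unpacked in Remark~\ref{PAH uniformly hyperbolic}). For the splitting: fix $x\in M\setminus S$; by item~(1), $f$ is differentiable at $x$, and since $f(S)=S$ we also have $f(x)\in M\setminus S$. The leaves $\Fs^s(x)$ and $\Fs^u(x)$ are $C^\infty$ curves through $x$, and by item~(3) they meet transversally there, so $T_x\Fs^s(x)$ and $T_x\Fs^u(x)$ are distinct lines in the plane $T_xM$; hence $T_xM=T_x\Fs^s(x)\oplus T_x\Fs^u(x)$. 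Next, item~(4) gives $f(\Fs^s(x))=\Fs^s(f(x))$ and $f(\Fs^u(x))=\Fs^u(f(x))$ as leaves, so differentiating along these curves yields $Df_x\big(T_x\Fs^s(x)\big)=T_{f(x)}\Fs^s(f(x))$ and $Df_x\big(T_x\Fs^u(x)\big)=T_{f(x)}\Fs^u(f(x))$; thus $Df_x$ is ``diagonal'', carrying each summand at $x$ onto the corresponding summand at $f(x)$.

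It then remains to compute the two scaling factors, and here I would use that $\nu^s$, being a measure on curves transverse to $\Fs^s$, restricts to a leafwise length element along the leaves of $\Fs^u$, and dually $\nu^u$ restricts to a leafwise length element along the leaves of $\Fs^s$. In a regular chart $\oldphi_k$ this is transparent, since there $\Fs^u$-leaves are the lines $\{\im z=\mathrm{const}\}$ and $\nu^s$ is the pullback of $|\re\,dz|$, i.e. arclength in the $\re z$ coordinate; in a singular chart the same holds away from the vertex because $z\mapsto z^{p/2}$ is a local diffeomorphism there. Writing $|\cdot|_u$ and $|\cdot|_s$ for the norms these length elements induce on $T_x\Fs^u(x)$ and $T_x\Fs^s(x)$, item~(5) together with Remark~\ref{PAH uniformly hyperbolic} says that any arc $\gamma$ in an $\Fs^u$-leaf satisfies $\nu^s(f(\gamma))=\lambda\,\nu^s(\gamma)$ and any arc $\eta$ in an $\Fs^s$-leaf satisfies $\nu^u(f(\eta))=\lambda^{-1}\nu^u(\eta)$; differentiating these identities gives $|Df_x\xi^u|_u=\lambda|\xi^u|_u$ and $|Df_x\xi^s|_s=\lambda^{-1}|\xi^s|_s$. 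Reading the splitting $T_x\Fs^s(x)\oplus T_x\Fs^u(x)$ in the coordinates normalized by these transverse measures, $Df_x$ becomes precisely $(\xi^s,\xi^u)\mapsto(\xi^s/\lambda,\lambda\xi^u)$.

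The main obstacle is this last step: making precise what ``in these coordinates'' means, and verifying that $\nu^s$ and $\nu^u$ genuinely restrict to nonvanishing smooth length elements on the leaves through $x$ --- immediate in a regular neighborhood, and valid in a singular neighborhood because the chart map is a diffeomorphism off the singularity --- so that the differentiation argument in the previous paragraph is legitimate. Everything else is a direct unpacking of the definitions of pseudo-Anosov homeomorphism and of the transverse measures.
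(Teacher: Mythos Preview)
Your proposal is correct and follows essentially the same approach as the paper: the paper's proof is a single sentence stating that the result ``follows immediately from the definition of pseudo-Anosov diffeomorphisms after a calculation in coordinates (see Remark~\ref{PAH uniformly hyperbolic}),'' and your argument is precisely that calculation spelled out in detail. You have supplied the unpacking the paper omits, but the route is the same.
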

	
	\begin{proof}
		This follows immediately from the definition of pseudo-Anosov diffeomorphisms after a calculation in coordinates (see Remark \ref{PAH uniformly hyperbolic}). 
	\end{proof}
	
	\begin{proposition}[see \cite{FLP79}, Expos\'e 10]\label{PAH measure}
		A pseudo-Anosov surface homeomorphism $f : M \to M$ preserves a smooth invariant probability measure $\nu$ defined locally as the product of $\nu^s$ on $\Fs^u$-leaves with $\nu^u$ on $\Fs^s$-leaves. In any coordinate chart of $M$, this probability measure $\nu$ has a density with respect to the measure induced by the Lebesgue measure on $\R^2$, and this density vanishes at singularities. 
	\end{proposition}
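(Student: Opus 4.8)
The plan is to construct $\nu$ chart by chart, glue the local pieces using holonomy invariance of the transverse measures, extract the density statement and finiteness from an explicit computation in the singular charts, and then derive $f$-invariance from the contraction/dilation relation of Definition~\ref{PAH-def}(5) together with the observation that the singular set $S$ is $\nu$-null and $f$-invariant. So first I would build the measure locally: on each regular neighborhood $U_k$ ($k>m$) set $\nu|_{U_k}$ to be the $\oldphi_k$-pullback of planar Lebesgue measure $|\re dz|\,|\im dz|$, which by Definition~\ref{PAH-def}(7)(d) is exactly the product of the transverse measure $\nu^s$ restricted to $\Fs^u$-leaves with $\nu^u$ restricted to $\Fs^s$-leaves, viewing the flow box $U_k$ as $(\Fs^u\text{-segment})\times(\Fs^s\text{-segment})$; on each singular neighborhood $U_k$ ($k\le m$; shrink it so that $U_k\cap S=\{x_k\}$, and note that a region foliated by parallel lines contains no singularity, so regular neighborhoods avoid $S$) set $\nu|_{U_k}$ to be the $\oldphi_k$-pullback of $\bigl|\re(dz^{p/2})\bigr|\,\bigl|\im(dz^{p/2})\bigr|$, $p=p(k)$ (Definition~\ref{PAH-def}(6)(d)). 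Wedging these two ``absolute $1$-forms'' yields $\tfrac{p^2}{4}|z|^{p-2}\,|dx\,dy|$ with $z=x+iy$; although $z^{p/2}$ is multivalued for odd $p$, taking absolute values removes the branch ambiguity, so the density $\tfrac{p^2}{4}|z|^{p-2}$ is unambiguous and depends only on $|z|$. Off $S$ both recipes present the same intrinsic measure $\nu^s\times\nu^u$ on a flow box — here one uses that $\nu^s,\nu^u$ are holonomy invariant, so the product is independent of the chart used to compute it — and since no overlap of two distinct charts meets $S$, the local definitions agree on overlaps and $\nu$ is a well-defined Borel measure on $M$, coinciding with the asserted local product away from $S$.

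Next I would read off the density and normalize. In a regular chart $\nu$ has density $1$ against Lebesgue measure, and in a singular chart density $\tfrac{p^2}{4}|z|^{p-2}$; since $p=p(k)\ge 3$ we have $p-2\ge 1$, so this density is continuous on $D_{a_k}$, positive off $0$, smooth on $D_{a_k}\setminus\{0\}$, and vanishes at $0=\oldphi_k(x_k)$. In an arbitrary smooth chart of $M$ it differs from these by a smooth nonvanishing Jacobian, so the same conclusions hold; in particular $\nu(S)=0$. As the finitely many charts have relatively compact domains and bounded densities, $0<\nu(M)<\infty$, and replacing $\nu$ by $\nu/\nu(M)$ gives the required probability measure with the stated (vanishing-at-$S$) density.

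For invariance it suffices to prove $f_*\nu=\nu$ on $M\setminus S$: since $S$ is finite it is $\nu$-null, and since $f$ is a homeomorphism fixing each $x_k$ we have $f(S)=S$, so agreement off $S$ forces agreement on $M$. Cover $M\setminus S$ by flow boxes — the regular neighborhoods and the punctured singular neighborhoods — in which $\nu=\nu^s\otimes\nu^u$ in the sense above. For a Borel set $A$ contained in a flow box with $f(A)$ also contained in a flow box, Definition~\ref{PAH-def}(4) gives that $f$ sends $\Fs^s$-leaves to $\Fs^s$-leaves and $\Fs^u$-leaves to $\Fs^u$-leaves, while Definition~\ref{PAH-def}(5) and Remark~\ref{PAH uniformly hyperbolic} give that $f$ multiplies $\nu^s$-transverse length by $1/\lambda$ and $\nu^u$-transverse length by $\lambda$; by Fubini the product measure of $f(A)$ is therefore $\tfrac{1}{\lambda}\cdot\lambda=1$ times that of $A$, so $\nu(f(A))=\nu(A)$. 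Such sets $A$ form a $\pi$-system generating the Borel $\sigma$-algebra of $M\setminus S$, so a monotone-class argument upgrades this to $f_*\nu=\nu$ on $M\setminus S$. (Alternatively one may invoke Proposition~\ref{PAH-differential}: off $S$, in coordinates adapted to the two foliations $Df$ acts by $(\xi^s,\xi^u)\mapsto(\xi^s/\lambda,\lambda\xi^u)$, of determinant $1$ for the area form determined by the pair of transverse measures, and $\nu$ is precisely that area form.)

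I expect the main obstacle to be the analysis in the singular charts: pinning down the sense in which $\nu$ equals $\nu^s\times\nu^u$ near a $p$-pronged singularity, where ordinary flow-box coordinates degenerate and one must work with the (multivalued) coordinate $z^{p/2}$. This is simultaneously the source of the vanishing factor $\tfrac{p^2}{4}|z|^{p-2}$ — hence of the degeneracy of the density at singularities — and the place where the hypothesis $p\ge 3$ enters; the remainder is bookkeeping, the cancellation $\lambda^{-1}\lambda=1$, and a standard null-set reduction.
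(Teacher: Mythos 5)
The paper does not prove Proposition~\ref{PAH measure}; it cites \cite{FLP79}, Expos\'e~10 directly, so there is no in-text argument to compare against. Your blind reconstruction is correct and is the standard construction from that reference: define $\nu$ on flow boxes off $S$ as the product $\nu^u\otimes\nu^s$ (well-defined on overlaps by holonomy invariance of the transverse measures), compute in the branched coordinate $z^{p/2}$ that the density near a $p$-pronged singularity is a constant times $|z|^{p-2}$ so that $\nu$ extends continuously by zero over $S$ and $\nu(S)=0$, normalize by the (finite, positive) total mass, and obtain $f$-invariance from the cancellation $\lambda\cdot\lambda^{-1}=1$ in Definition~\ref{PAH-def}(5) — equivalently, from $\det Df_x=1$ in the adapted frame of Proposition~\ref{PAH-differential}. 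You correctly identified and disposed of the one delicate step, namely interpreting the product measure where the flow-box structure degenerates; the stray constant $p^2/4$ (arising from using $z^{p/2}$ rather than $(2/p)z^{p/2}$) is harmless and disappears under normalization.
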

	
	\begin{proposition}[see \cite{FLP79}, Expos\'e 10]\label{pseudo-Anosov Markov}
		Every pseudo-Anosov homeomorphism of a surface $M$ admits a finite Markov partition of arbitrarily small diameter. Conjugated to the symbolic system induced by this Markov partition, with the measure $\nu$ as in the preceding proposition, $(M, f, \nu)$ is Bernoulli. 
	\end{proposition}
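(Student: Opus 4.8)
The plan is to assemble the statement from two classical ingredients: the construction of Markov partitions for pseudo-Anosov maps (see \cite{FLP79}), and the fact from Ornstein theory that Gibbs measures on mixing subshifts of finite type are Bernoulli. For the first part I would work with \emph{rectangles}: subsets $R \subset M$ carrying a local product structure $R \cong R^u(x) \times R^s(x)$, where $R^u(x) \subset \Fs^u(x)$ and $R^s(x) \subset \Fs^s(x)$ are subarcs of the leaves through a reference point $x \in R$, with the convention that near a $p$-pronged singularity $x_k$ a rectangle containing $x_k$ is a union of $2p$ such pieces glued at the vertex and modeled in the chart $\oldphi_k$ on a neighborhood of $0$ (this is made precise in Section 3). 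Since the foliation charts give product coordinates away from $S$ and the charts $\oldphi_k$ do so near each $x_k$, one can cover $M$ by finitely many rectangles of diameter $< \epsilon$ with pairwise disjoint interiors, and then refine this cover to a Markov family $\Rs = \{R_1, \dots, R_n\}$: one enlarges each stable boundary $\partial^s R_i$ to the smallest family of stable leaf segments closed under the operation ``cut $R_j$ along $f(\partial^s R_i) \cap R_j$'', and dually for the unstable boundaries using $f^{-1}$. The decisive point is that this refinement terminates after finitely many steps, and this is where expansiveness of $f$ enters: by Proposition \ref{PAH-differential} and Remark \ref{PAH uniformly hyperbolic}, distinct nearby orbits are eventually separated in the $\nu^s$- or $\nu^u$-transverse measure, so only finitely many leaf segments of transverse length bounded below can be produced. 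The resulting family then satisfies $f(\partial^s R_i) \subseteq \bigcup_j \partial^s R_j$ and $f^{-1}(\partial^u R_i) \subseteq \bigcup_j \partial^u R_j$ for all $i$, which is the Markov condition, and since $\epsilon$ was arbitrary the diameter is arbitrarily small.

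Given such a partition, I would set $A_{ij} = 1$ iff $\Int R_i \cap f^{-1} \Int R_j \neq \emptyset$ and define the coding $\pi : \Sigma_A \to M$ by $\pi\big((i_k)_{k \in \Z}\big) = \bigcap_k f^{-k} \overline{R_{i_k}}$; standard arguments show $\pi$ is a continuous surjection with $\pi \circ \sigma = f \circ \pi$, finite-to-one, and injective off the grand orbit of the boundary $\bigcup_i \partial R_i$. Because this boundary lies in a finite union of stable and unstable leaf segments while $\nu$ is locally the product of $\nu^s$ along $\Fs^u$-leaves with $\nu^u$ along $\Fs^s$-leaves (Proposition \ref{PAH measure}) --- so that a leaf segment is null for the transverse measure governing its own direction --- the boundary is $\nu$-null, and hence $\pi$ is a measure-theoretic isomorphism from $(M, f, \nu)$ onto $(\Sigma_A, \sigma, \pi^*\nu)$. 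The same product structure, together with the fact that $f$ dilates $\nu^u$ by $\lambda$ and contracts $\nu^s$ by $\lambda$ (Remark \ref{PAH uniformly hyperbolic}), shows that $\pi^*\nu$ assigns to each cylinder $[i_0, \dots, i_m]$ a mass comparable, up to a multiplicative constant depending only on $i_0$ and $i_m$, to $\lambda^{-m}$; that is, $\pi^*\nu$ is the Gibbs measure of the constant potential $-\log\lambda$, equivalently the measure of maximal entropy $\log\lambda = \htop(f)$ of $\Sigma_A$.

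To conclude, pseudo-Anosov homeomorphisms are topologically mixing (a classical consequence of the uniform dilation factor and the density of the foliation leaves), so $\Sigma_A$, restricted to the support of $\pi^*\nu$, is an irreducible aperiodic subshift of finite type; a Gibbs measure for a H\"older potential on such a subshift is Bernoulli, by the Friedman--Ornstein theorem together with Bowen's verification that such measures are very weak Bernoulli. Hence $(\Sigma_A, \sigma, \pi^*\nu)$ is a Bernoulli automorphism, and therefore so is $(M, f, \nu)$. I expect the genuinely hard step to be the Markov partition construction of the first paragraph: arranging simultaneously that the refined family stays finite, that it has the Markov property, and that the degenerate rectangles at the $p$-pronged singularities behave correctly is precisely the delicate content of \cite{FLP79}, which I would invoke rather than reproduce; the coding and the Bernoulli argument are then routine given the local product structure of $\nu$ and the topological mixing of $f$.
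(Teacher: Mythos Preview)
The paper does not give its own proof of this proposition; it is stated with a citation to \cite{FLP79}, Expos\'e 10, and no proof environment follows. So there is nothing in the paper to compare your argument against beyond the bare reference.

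Your sketch is a faithful outline of the classical route: build Markov rectangles adapted to the singular foliations, refine to enforce the Markov boundary condition, code to a subshift of finite type, identify $\pi^*\nu$ as the Parry measure (Gibbs for the constant potential $-\log\lambda$), and invoke Bowen and Friedman--Ornstein to conclude Bernoulli. This is exactly the content of the cited expos\'e together with standard symbolic-dynamics machinery, and you correctly flag the Markov partition construction near the prongs as the only genuinely delicate step, which you would defer to \cite{FLP79} rather than reprove. One small caution: your description of a rectangle containing a singularity as ``a union of $2p$ such pieces glued at the vertex'' is not quite how \cite{FLP79} handles it --- there the singularities sit on the boundary of the partition elements rather than in their interiors --- but this is a matter of bookkeeping and does not affect the argument. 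Otherwise the proposal is sound and matches what the paper is implicitly invoking.
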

	
	\section{Pseudo-Anosov diffeomorphisms}

Generally speaking, pseudo-Anosov homeomorphisms as defined in Definition \ref{PAH-def} are differentiable everywhere except at the singularities $x_k$ with $p(k) \geq 3$. This is a consequence of the fact that $f$ contracts (resp. expands) points in the stable (resp. unstable) leaves of the foliation, so the differential of $f$ cannot possibly be linear at the singularities.

In this section, we construct a surface diffeomorphism $g : M \to M$ that is topologically conjugate to the pseudo-Anosov homeomorphism $f$, and whose differential at the singularity is the identity. (Since we assume the singularities are fixed, this is a reasonable statement.) 

Before proceeding with the construction, we point out that some literature refers to the maps defined in Definition \ref{PAH-def} as ``pseudo-Anosov diffeomorphisms'', despite the fact that these maps are not differentiable at the singularities. To avoid any confusion, we reserve the word ``diffeomorphism'' only for those maps that are differentiable on all of $M$, and use the phrase ``pseudo-Anosov homeomorphism'' for the maps described in Definition \ref{PAH-def}. 

Let $x_k \in S$, let $p = p(x_k)$, and let $\oldphi_k : U_k \to \C$ be the chart described in part (6) of Definition (\ref{PAH-def}). The \emph{stable} and \emph{unstable prongs} at $x_k$ are the leaves $P^s_{kj}$ and $P^u_{kj}$, $j = 0, \ldots, p-1$ of $\Fs^s$ and $\Fs^u$, respectively, whose endpoints meet at $x_k$. Locally, they are given by:
\begin{align*}
P^s_{kj} &= \oldphi_k^{-1} \left\{\rho e^{i\tau} : 0 \leq \rho < a_k, \: \tau = \frac{2j+1}p \pi \right\}, \\
\textrm{and}\quad P^u_{kj} &= \oldphi_k^{-1} \left\{\rho e^{i\tau} : 0 \leq \rho < a_k, \: \tau = \frac{2j}p \pi \right\}.
\end{align*}
For simplicity, assume $f(P^s_{kj}) \subseteq P^s_{kj}$ for all $j = 1, \ldots, p$. Furthermore, we define the \emph{stable} and \emph{unstable sectors} at $x_k$ to be the regions in $U_k$ bounded by the stable (resp. unstable) prongs: 
\begin{align*}
S^s_{kj} &= \oldphi_k^{-1} \left\{\rho e^{i\tau} : 0 \leq \rho < a_k, \: \frac{2j-1}p \pi \leq \tau \leq \frac{2j+1}p \pi \right\}, \\
\textrm{and}\quad S^u_{kj} &= \oldphi_k^{-1} \left\{\rho e^{i\tau} : 0 \leq \rho < a_k, \: \frac{2j}p \pi \leq \tau \leq \frac{2j+2}p \pi \right\}.
\end{align*}
The strategy for creating our diffeomorphism $g$ is adapted from section 6.4.2 of \cite{BaPeNUH}. In each stable sector, we apply a ``slow-down'' of the trajectories, followed by a change of coordinates ensuring the resulting diffeomorphism $g$ preserves the measure induced by a convenient Riemannian metric. 

Let $F : \C \to \C$ be the map $s_1 + is_2 \mapsto \lambda s_1 + is_2/\lambda$. Note $F$ is the time-1 map of the vector field $V$ given by 
\[
\dot{s}_1 = (\log\lambda)s_1, \quad \dot{s}_2 = -(\log\lambda) s_2.
\]
Let $0 < r_1 < r_0 < \min\{a_1, \ldots, a_\ell\}$, and define $\tilde r_0$ and $\tilde r_1$ by $\tilde r_j = (2/p)r^{p/2}_j$ for $j=0,1$ and for each $p = p(k)$. Define a ``slow-down'' function $\Psi_p$ for the $p$-pronged singularity on the interval $[0, \infty)$ so that: 
\begin{enumerate}[label=(\alph*)]
	\item $\Psi_p(u) = (p/2)^{(2p-4)/p} u^{(p-2)/p}$	for $u \leq \tilde r_1^2$; 
	\item $\Psi_p$ is $C^\infty$ except at $0$; 
	\item $\dot\Psi_p(u) \geq 0$ for $u > 0$; 
	\item $\Psi_p(u) = 1$ for $u \geq \tilde r_0^2$.
\end{enumerate}
Consider the vector field $V_{\Psi_p}$ on $D_{\tilde r_0} \subset \C$ defined by 
\begin{equation}\label{slowdown vector field}
\dot s_1 = (\log \lambda) s_1 \Psi_p \left(s_1^2 + s_2^2\right) \quad \textrm{and} \quad \dot s_2 = -(\log \lambda) s_2 \Psi_p\left(s_1^2 + s_2^2\right).
\end{equation}
Let $G_p$ be the time-1 map of the vector field $V_{\Psi_p}$. Assume $r_1$ is chosen to be small enough so that $G_p = F$ on a neighborhood of the boundary of $D_{\tilde r_0}$, and assume $r_0$ is chosen to be small enough so that the open neighborhood $\mathcal U_0 := \union_{k=1}^m \oldphi_k^{-1}\left(D_{r_0}\right)$ of $S$ is disjoint from the open set $\union_{k={m+1}}^\ell \oldphi_k^{-1}\left(D_{a_k}\right)$. We also define the open neighborhood $\tilde\Us_0 := \union_{k=1}^m \oldphi_k^{-1}\left(D_{\tilde r_0}\right) \subset \Us_0$, as well as $\Us_1$ and $\tilde \Us_1$ defined analogously with $D_{r_1}$ and $D_{\tilde r_1}$ respectively. 

Let $\tilde a_k = (2/p) a_k^{p/2}$, and define the coordinate change $\Phi_{kj} : \oldphi_k S^s_{kj} \to \left\{z : \re z \geq 0 \right\} \cap D_{\tilde a_k}$ by 
\[
\Phi_{kj}(z) = (2/p)z^{p/2} = w = s_1 + is_2.
\]
Define $g : M \to M$ by $g(x) = f(x)$ for $x \not\in \Us_0$ and meanwhile for $1 \leq k \leq m$, $1 \leq j \leq p(k)$, define $g$ on each sector $S^s_{kj}\cap\oldphi_k^{-1}\left(D_{r_0}\right)$ by 
\[
g(x) = \oldphi_k^{-1} \Phi_{kj}^{-1} G_p \Phi_{kj} \oldphi_k(x). 
\]

\begin{proposition}[see \cite{BaPeNUH}]\label{f-g conjugacy}
	The map $g$ defined above is well-defined on the unstable prongs and singularity. It is in fact a diffeomorphism topologically conjugate to $f$, and for any $\epsilon > 0$, $r_0$ and $r_1$ can be chosen so that $\norm{f - g}_{C^0} < \epsilon$. In particular, $g$ admits a Markov partition of arbitrarily small diameter. 
\end{proposition}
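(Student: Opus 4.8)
The plan is to verify the three assertions in order, relying on the explicit formula $g = \oldphi_k^{-1}\Phi_{kj}^{-1}G_p\Phi_{kj}\oldphi_k$ on stable sectors and $g = f$ elsewhere. First I would check that $g$ is well-defined and continuous on the overlaps. The two potential sources of ambiguity are (i) the boundary $\{|z| = r_0\}$ between the slowed-down region and the region where $g = f$, and (ii) the common prongs shared by adjacent stable sectors, together with the singular point $x_k$ itself. For (i), the choice of $r_1$ guarantees $G_p = F$ near $\partial D_{\tilde r_0}$, and in the coordinates $\Phi_{kj}\oldphi_k$ the map $f$ is exactly $F$ (this is what part (6) of Definition \ref{PAH-def} says: the chart $\Phi_{kj}\oldphi_k$ straightens $\Fs^s$ and $\Fs^u$ to horizontal and vertical lines and the transverse measures to $|ds_2|$ and $|ds_1|$, and on those measures $f$ acts by $\lambda$ and $1/\lambda$), so the two definitions of $g$ agree on a neighborhood of $\{|z|=r_0\}$ and patch to a single $C^\infty$ map there. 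For (ii), on a stable prong $P^s_{kj}$ we have $s_2 = 0$, and the vector field $V_{\Psi_p}$ preserves $\{s_2 = 0\}$, so $G_p$ maps the prong into itself; one checks that the value of $g$ on a prong computed from either adjacent sector is the same (both reduce to the time-1 flow of $\dot s_1 = (\log\lambda)s_1\Psi_p(s_1^2)$ pulled back to $M$). At the vertex $s_1 = s_2 = 0$ all sectors give $g(x_k) = x_k$. This also shows $g$ is well-defined on the unstable prongs, which lie in the interior of the stable sectors.

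Next I would argue $g$ is a diffeomorphism. Away from $\Us_0$ it coincides with $f$, which is already $C^\infty$ there, and we have just seen the definitions patch smoothly across $\{|z| = r_0\}$. Inside each sector, $G_p$ is the time-1 map of a $C^\infty$ (away from $0$) vector field, hence a $C^\infty$ diffeomorphism of $D_{\tilde r_0}\setminus\{0\}$ onto its image; conjugating by the diffeomorphisms $\Phi_{kj}$ and $\oldphi_k$ keeps this true on $S^s_{kj}\setminus\{x_k\}$. The one delicate point is smoothness at the singularity $x_k$: here one invokes property (1) of $\Psi_p$, namely $\Psi_p(u) = (p/2)^{(2p-4)/p}u^{(p-2)/p}$ for small $u$, which is precisely the exponent that makes the conjugated vector field $\oldphi_k^{-1}\Phi_{kj}^{-1}V_{\Psi_p}$, expressed in the chart $\oldphi_k$, extend $C^\infty$-smoothly across $0$ with vanishing linear part — this is the computation that will be carried out in Section 5, and I would cite it (or \cite{BaPeNUH}, Section 6.4.2) rather than reproduce it. Bijectivity of $g$ is clear since it is built from bijections on each piece that agree on overlaps, and the inverse is smooth by the same reasoning applied to $G_p^{-1}$.

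For the topological conjugacy, the point is that in the straightening coordinates $\Phi_{kj}\oldphi_k$ both $F$ (the local model for $f$) and $G_p$ (the local model for $g$) are time-1 maps of vector fields of the form $\dot s_1 = c\, s_1,\ \dot s_2 = -c\, s_2$ with $c > 0$ — constant $\log\lambda$ for $F$, and the nonnegative function $(\log\lambda)\Psi_p(s_1^2+s_2^2)$ for $G_p$ — and these fields have the same orbits (the hyperbolas $s_1 s_2 = \mathrm{const}$ and the axes), only traversed at different speeds; a conjugacy can therefore be built leaf by leaf by a time reparametrization, fixing $x_k$, the identity near $\partial D_{\tilde r_0}$, and glued to the identity outside $\Us_0$. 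Finally, the $C^0$-estimate: outside $\Us_0$, $g = f$; inside, $g(x)$ and $f(x)$ lie in the same small sector neighborhood $\oldphi_k^{-1}(D_{r_0})$, whose diameter tends to $0$ as $r_0 \to 0$, so $\norm{f-g}_{C^0}$ is bounded by a quantity going to $0$ with $r_0$; choosing $r_0$ (and then $r_1 < r_0$) small makes it $< \epsilon$. The existence of a Markov partition of arbitrarily small diameter for $g$ then follows from Proposition \ref{pseudo-Anosov Markov} applied to $f$, transported through the conjugacy, since the conjugacy is a homeomorphism and carries the partition of $f$ to one for $g$ with diameter controlled by the modulus of continuity of the conjugacy (which, being the identity outside $\Us_0$ and a bounded reparametrization inside, is uniformly close to the identity). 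The main obstacle is the $C^\infty$-smoothness of $g$ at the singularities: it is the only place where the precise form of the slow-down exponent matters, and it requires the coordinate computation deferred to Section 5.
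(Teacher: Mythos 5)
The paper gives no proof of this proposition; it is cited directly to \cite{BaPeNUH} (Section 6.4.2, as the surrounding text notes, contains the construction and its verification), so there is no in-paper argument to compare against. Your sketch reproduces the right ideas in the right order: patching on $\{\lvert w\rvert = \tilde r_0\}$ and on the stable prongs, $C^\infty$-smoothness at $x_k$ hinging on the exponent $(p-2)/p$ in $\Psi_p$, the conjugacy built by matching flow times of $V$ and $V_{\Psi_p}$ from a cross-section, the $C^0$-estimate from the observation that for $x\in\Us_0$ both $f(x)$ and $g(x)$ remain in an $O(r_0)$-neighborhood of the fixed singularity, and the Markov partition obtained by transporting that of $f$ through $H$ and invoking uniform continuity (the paper does exactly this again at the start of Section 7).

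One step deserves the same explicit flag you correctly raise for smoothness of $g$ at $x_k$: \emph{continuity of the conjugacy at $x_k$}. Matching points of equal flow time from a transversal manifestly produces a bijection intertwining the time-1 maps and equal to the identity where $V = V_{\Psi_p}$, but both vector fields vanish at the origin, so trajectories entering a neighborhood of $x_k$ have unbounded passage time; that $H(x)\to x_k$ as $x\to x_k$ (so that $H$ is in fact a homeomorphism) is not a consequence of ``same orbits, different speeds'' alone and requires a quantitative comparison of the passage times of the two flows through $D_{\tilde r_0}$. This is handled in \cite{BaPeNUH} and in \cite{GerbKatPA} by explicit estimates, and is worth naming as a second deferred technical point alongside the smoothness computation, rather than leaving it implicit in the phrase ``built leaf by leaf by a time reparametrization.''

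Minor wording point: in the $C^0$-estimate you say $g(x)$ and $f(x)$ lie in $\oldphi_k^{-1}(D_{r_0})$, but $f$ need not map $\oldphi_k^{-1}(D_{r_0})$ into itself (points on unstable prongs move outward); what is true and suffices is that both land in $\oldphi_k^{-1}(D_{Cr_0})$ for a constant $C = C(\lambda, p)$, so the diameter bound still tends to zero with $r_0$.
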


Next we define a Riemannian metric $\zeta = \langle \cdot, \cdot \rangle$ on $M \setminus S$ with respect to which the map $g$ is invariant. In the stable sector $S^s_{kj} \cap \oldphi_k^{-1}(D_{\tilde a_k})$, we consider the coordinates $w = s_1 + is_2$ given by $\Phi_{kj} \circ \oldphi_k$ defined above. Outside of this neighborhood, we use the coordinates $z = s_1 + is_2$. In both sets of coordinates, the stable and unstable transversal measures are $\nu^s = |ds_1|$ and $\nu^u = |ds_2|$. On stable sectors in $M \setminus S$, we define the Riemannian metric $\zeta$ to be the pullback of $\left(ds_1^2 + ds_2^2\right)/\Psi_p\left(s_1^2 + s_2^2\right)$ under $\Phi_{kj} \circ \oldphi_k$. In regular neighborhoods $(U_k, \oldphi_k)$, we define $\zeta = \oldphi_k^* \left(ds_1^2 + ds_2^2\right)$. Since $\tilde r_0$ is chosen so that $\oldphi_k^{-1}\left(D_{\tilde r_0}\right)$ is disjoint from regular neighborhoods, and $\Psi_p(u) \equiv 1$ for $u \geq \tilde r_0^2$, $\zeta$ is consistently defined on chart overlaps. One can further show that $\zeta$ agrees with the Euclidean metric in $\oldphi_k^{-1}\left(D_{\tilde r_0}\right)$. So $\zeta$ can be extended to a Riemannian metric on all of $M$. 

\begin{proposition}[see \cite{BaPeNUH}]\label{PAD measure}
	Letting $z = t_1 + i t_2$ be the coordinates given by $(\oldphi_k, U_k)$, $1 \leq k \leq m$, the Riemannian metric $\zeta$ is actually the Euclidean metric $dt_1^2 + dt_2^2$. In particular, the diffeomorphism $g : M \to M$ is $\mu_1$-area preserving, where $\mu_1$ is the volume determined by $\zeta$. 
\end{proposition}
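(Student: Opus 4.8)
The plan is to reduce both assertions to a computation in the coordinates $w = s_1 + is_2$ on a single stable sector, where by construction $\zeta$ is conformal to the Euclidean metric with conformal factor $1/\Psi_p(s_1^2+s_2^2)$. For the first assertion I would fix a singularity $x_k$ with $p=p(k)$ prongs and a stable sector $S^s_{kj}$, and work on the innermost region $\oldphi_k^{-1}(D_{r_1})\cap S^s_{kj}$, where $\Psi_p$ has the explicit form $\Psi_p(u)=(p/2)^{(2p-4)/p}u^{(p-2)/p}$. Writing $z=t_1+it_2$ for the $(\oldphi_k,U_k)$-coordinates and $w=\Phi_{kj}(z)=(2/p)z^{p/2}$, one has $dw = z^{p/2-1}\,dz$, hence $ds_1^2+ds_2^2 = |dw|^2 = |z|^{p-2}(dt_1^2+dt_2^2)$, while $s_1^2+s_2^2=|w|^2=(2/p)^2|z|^p$ fed into the power law gives $\Psi_p(s_1^2+s_2^2)=|z|^{p-2}$, the constant $(p/2)^{(2p-4)/p}$ being cancelled exactly by $(2/p)^{(2p-4)/p}$ and the powers of $|z|$ matching. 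Dividing, the pullback under $\Phi_{kj}\circ\oldphi_k$ of $(ds_1^2+ds_2^2)/\Psi_p(s_1^2+s_2^2)$ equals $dt_1^2+dt_2^2$, which is the first claim; in particular $\zeta$ extends across the singular point $x_k$ as the smooth flat metric $dt_1^2+dt_2^2$, consistently over the sectors meeting at $x_k$.

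For the ``in particular'' I would verify that $g$ has unit Jacobian with respect to the Riemannian volume $\mu_1$ of $\zeta$ at (almost) every point, treating $\Us_0$ and its complement separately. On a stable sector inside $\oldphi_k^{-1}(D_{r_0})$ the $\zeta$-volume form reads, in the $w$-coordinates, $\omega := \Psi_p(s_1^2+s_2^2)^{-1}\,ds_1\wedge ds_2$ (the area form of a conformal metric $\rho^2(ds_1^2+ds_2^2)$ being $\rho^2\,ds_1\wedge ds_2$), and $g$ is conjugate there, via $\Phi_{kj}\circ\oldphi_k$, to the time-$1$ map $G_p$ of the vector field $V_{\Psi_p}$ of \eqref{slowdown vector field}. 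The key observation is that $\Psi_p(s_1^2+s_2^2)^{-1}\,V_{\Psi_p} = (\log\lambda)\,(s_1,-s_2)$ is divergence-free, so $\mathcal L_{V_{\Psi_p}}\omega = \diver\big(\Psi_p(s_1^2+s_2^2)^{-1}V_{\Psi_p}\big)\,ds_1\wedge ds_2 = 0$; hence the flow of $V_{\Psi_p}$, and in particular $G_p$, preserves $\omega$, and pulling back $g$ preserves $\mu_1$ throughout $\Us_0$. Outside $\Us_0$ we have $g=f$, there $\zeta$ is the Euclidean metric in the working coordinates so $\mu_1$ coincides with $\nu^s\wedge\nu^u=\nu$, and $f$ preserves $\nu$ by Proposition \ref{PAH measure}; the two descriptions agree on the overlap, where $\Psi_p\equiv1$ and $g=f$ (recall $g$ is a globally defined diffeomorphism by Proposition \ref{f-g conjugacy}). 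Therefore $g$ preserves $\mu_1$ on all of $M$.

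I expect the only genuine obstacle to be the bookkeeping in the first step: one must carry the exponents and multiplicative constants through the substitution $w=(2/p)z^{p/2}$ into the explicit form of $\Psi_p$ and check the cancellation is exact --- essentially a verification that the slow-down function was normalized correctly for each number of prongs. The remaining ingredients are routine: the area form of a conformal metric, the one-line divergence computation, and matching the two prescriptions for $\mu_1$ across $\{|w|=\tilde r_0\}$, where $\Psi_p$ stops being a power law and $g$ switches from the $G_p$-model to $f$. One point worth recording is that the first assertion genuinely concerns the innermost region $\oldphi_k^{-1}(D_{r_1})$; on the collar $\tilde r_1\le|w|\le\tilde r_0$ the metric $\zeta$ interpolates and need not be Euclidean, but this does not affect the area-preservation conclusion, which uses only the definition of $\zeta$ as a conformal rescaling of the flat metric together with the explicit form of $V_{\Psi_p}$.
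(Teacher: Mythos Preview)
Your argument is correct, and in fact the paper does not supply a proof of this proposition at all --- it simply cites \cite{BaPeNUH}. Your computation is precisely the natural one: the substitution $w=(2/p)z^{p/2}$ gives $|dw|^2=|z|^{p-2}|dz|^2$ and $|w|^2=(2/p)^2|z|^p$, and plugging the latter into the power-law formula for $\Psi_p$ yields $\Psi_p(|w|^2)=|z|^{p-2}$, so the conformal factor cancels exactly and $\zeta=dt_1^2+dt_2^2$ on the inner disc. For the area-preservation, your divergence computation $\diver\big(\Psi_p^{-1}V_{\Psi_p}\big)=\diver\big((\log\lambda)(s_1,-s_2)\big)=0$ is the clean way to see that the flow of $V_{\Psi_p}$ preserves the weighted area form, and the matching with $f$ and $\nu$ outside $\Us_0$ is handled correctly.

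Your closing caveat is also well taken and worth keeping: the equality $\zeta=dt_1^2+dt_2^2$ is established only where $\Psi_p$ has its explicit power-law form, i.e.\ on $\oldphi_k^{-1}(D_{r_1})$; on the collar the metric genuinely interpolates and need not be flat in the $z$-coordinates. The paper's phrasing just before the proposition (``$\zeta$ agrees with the Euclidean metric in $\oldphi_k^{-1}(D_{\tilde r_0})$'') is at best imprecise on this point, and your reading is the correct one. This does not affect the area-preservation conclusion, as you note, since the Lie-derivative argument uses only the conformal form of $\zeta$ and the structure of $V_{\Psi_p}$, not the specific profile of $\Psi_p$.
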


For stable sectors $S^s_{kj}$, we use the coordinates $w = \Phi_{kj}^s(z) = s_1 + is_2$, and in regular neighborhoods $U_k$, $k \geq m$, we use the coordinates $z = s_1 + is_2$. Then $s_1$ represents the coordinate in the unstable foliation, and $s_2$ is the coordinate in the stable foliation. Define the coordinates $(\xi_1, \xi_2)$ in each tangent space $T_xM$, $x \in M \setminus S$, to be the coordinates with respect to 
\begin{equation}\label{sector-coords}
\left(\Phi_{kj} \circ \oldphi_k\right)^{-1}_* \left( \Psi_p\left(s_1^2 + s_2^2\right) \frac{\del}{\del s_i}\right), \quad i=1,2
\end{equation}
in each stable sector, and with respect to $\left(\oldphi_k\right)^{-1}_*\left(\del/\del s_i\right)$, $i=1,2$, in each regular neighborhood.  For $x \in M \setminus S$, let $C_x^+$ be the cone in $T_x M$ bounded by the lines $\xi_1 = \pm \xi_2$, respectively, and contains the tangent line to the $\Fs^u$ leaf through $x$. Respectively define $C_x^-$ to be the cone containing the $\Fs^s$ leaf. 
\begin{proposition}[see \cite{BaPeNUH}]\label{regular-cones}
	For $x \in M \setminus S$, the cones $C_x^+, C_x^-$ satisfy the following: 
	\begin{enumerate}[label=(\alph*)]
		\item $C_x^+$ and $C_x^-$ depend continuously on $x \in M \setminus S$; 
		\item $C_x^+$ (resp. $C_x^-$) is strictly invariant under $Dg$ (resp. $Dg^{-1}$) on $x \in M \setminus S$; 
		\item For each $x \in M \setminus S$, the intersections 
		\[
		E^u(x) := \intersection_{n = 0}^\infty Dg^n C^+_{g^{-n}(x)} \quad \textrm{and} \quad E^s(x) := \intersection_{n=0}^\infty Dg^{-n} C^-_{g^{n}(x)}
		\]
		are one-dimensional subspaces of $T_x M$; moreover, if $x \in M \setminus S$ is on an unstable leaf, then $E^u(x)$ is tangent to the unstable leaf (and similarly for $E^s(x)$ on a stable leaf). 
		\item $E^u(x)$ and $E^s(x)$ depend continuously on $x \in M \setminus S$. 
	\end{enumerate}
\end{proposition}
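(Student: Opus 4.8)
The plan is to reduce everything to an explicit computation in the coordinates $(\xi_1,\xi_2)$ defined in \eqref{sector-coords}, exploiting that in these coordinates the differential $Dg$ acts (away from the slow-down region) exactly like the hyperbolic linear map $(\xi_1,\xi_2)\mapsto(\lambda\xi_1,\xi_2/\lambda)$ from Proposition \ref{PAH-differential}, and inside the slow-down region like the time-$1$ map of the rescaled vector field \eqref{slowdown vector field}, which is still of the form $\dot\xi_1 = (\log\lambda)\,\xi_1\,\Psi_p$, $\dot\xi_2 = -(\log\lambda)\,\xi_2\,\Psi_p$ with $\Psi_p>0$ on $M\setminus S$. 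First I would record that $Dg$ in the $(\xi_1,\xi_2)$-coordinates is diagonal with positive diagonal entries $e^{\int_0^1 (\log\lambda)\Psi_p\,dt}\ge 1$ and $e^{-\int_0^1(\log\lambda)\Psi_p\,dt}\le 1$ along an orbit segment (and exactly $\lambda,\lambda^{-1}$ outside $\tilde\Us_0$); this is the key structural fact from which all four items follow. Continuity in (1) then holds because the coordinate frame \eqref{sector-coords} is built from $C^\infty$ charts $\oldphi_k$, the $C^\infty$ coordinate changes $\Phi_{kj}$, and the function $\Psi_p$ which is $C^\infty$ on $M\setminus S$, and the prongs along which sectors are glued carry consistent transversal coordinates $s_1,s_2$; one checks the cone field $C^\pm_x$ (the set of vectors with $|\xi_1|\ge|\xi_2|$, resp. $\le$) is the same whether computed from the stable-sector frame or the regular-neighborhood frame on the overlaps, since $\Psi_p\equiv 1$ there.

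For (2), strict invariance: if $v=(\xi_1,\xi_2)\in C^+_x\setminus\{0\}$, i.e. $|\xi_1|\ge|\xi_2|$, then $Dg_x v = (a\xi_1, a^{-1}\xi_2)$ with $a\ge 1$ (and $a>1$ wherever $\Psi_p>0$, which is everywhere on $M\setminus S$ since $\dot\Psi_p\ge0$ and $\Psi_p=(p/2)^{(2p-4)/p}u^{(p-2)/p}>0$ for $u>0$); hence $|a\xi_1| > |a^{-1}\xi_2|$ unless $\xi_1=\xi_2=0$, so $Dg_x(C^+_x)\subset \Int C^+_{g(x)}\cup\{0\}$, which is the strict invariance statement. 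The argument for $C^-$ under $Dg^{-1}$ is symmetric, interchanging the roles of $\xi_1$ and $\xi_2$. For (3), I would show the nested intersection defining $E^u(x)$ is a single line by a standard graph-transform/contraction estimate: a vector in $Dg^n C^+_{g^{-n}(x)}$ has slope $|\xi_2/\xi_1|$ bounded by $\prod_{j=1}^{n} a_j^{-2}$ where $a_j\ge 1$ is the expansion factor along the $j$-th step of the backward orbit; since each $a_j\ge 1$ and $a_j=\lambda>1$ infinitely often (the forward orbit of any point spends a definite fraction of time outside $\tilde\Us_0$, or more simply, cannot stay in $\tilde\Us_0$ forever since the only point fixed by $g$ there is the singularity itself), this product tends to $0$, so the intersection is exactly the line $\{\xi_2=0\}$ in the appropriate limiting frame; the tangency claim for points on an unstable leaf is immediate because such leaves are $Dg$-invariant and lie in $\{\xi_2=0\}$. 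Item (4) follows from (3) together with (1): $E^u(x)$ and $E^s(x)$ are the limits of the continuously-varying cone fields under the continuously-varying $Dg^{\pm n}$, and the convergence is uniform on compact subsets of $M\setminus S$ by the same product estimate, so the limits are continuous.

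The main obstacle I anticipate is item (3), specifically verifying that backward (resp.\ forward) orbits cannot be trapped in the slow-down neighborhood $\tilde\Us_0$ in a way that makes $\sum_j \log a_j$ converge — i.e.\ controlling the hyperbolicity uniformly near the singularities, where $\Psi_p\to 0$ and the Lyapunov exponents degenerate to zero. The resolution is that the singularities are \emph{fixed} and \emph{isolated}, so for $x\in M\setminus S$ the backward orbit $g^{-n}(x)$ either leaves $\tilde\Us_0$ infinitely often (giving factors of $\lambda$) or converges to a singularity; in the latter case one needs the quantitative estimates on the behavior of trajectories near the indifferent fixed points — precisely the calculations promised for Section 5, analogous to Section 5 of \cite{PSZ17} — to show $\sum_j\log a_j$ still diverges because $\Psi_p\big(|s(g^{-j}x)|^2\big)$ decays only polynomially while the orbit's distance to the singularity also decays polynomially, so the series of exponents is a divergent $p$-series-type sum. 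Modulo invoking those Section 5 estimates, the rest of the proof is the routine cone-field computation sketched above, and indeed the proposition is attributed to \cite{BaPeNUH}, so I would present it as a verification that the Gerber–Katok construction meets the hypotheses rather than a new argument.
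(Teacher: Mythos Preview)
Your argument rests on a claim that is false: you assert that in the $(\xi_1,\xi_2)$-coordinates of \eqref{sector-coords}, the differential $Dg$ is diagonal, with entries $e^{\pm\int_0^1(\log\lambda)\Psi_p\,dt}$. This would be true only if the slow-down function $\Psi_p$ were constant along trajectories, but $\Psi_p$ depends on $u=s_1^2+s_2^2$, so the variational equations of \eqref{slowdown vector field} acquire off-diagonal terms. Explicitly (as written out in the proof of Lemma~\ref{strong-cones}),
\[
\dot\zeta_1=\log\lambda\Big(\big(\Psi_p(u)+2s_1^2\dot\Psi_p(u)\big)\zeta_1+2s_1s_2\dot\Psi_p(u)\,\zeta_2\Big),
\]
\[
\dot\zeta_2=-\log\lambda\Big(2s_1s_2\dot\Psi_p(u)\,\zeta_1+\big(\Psi_p(u)+2s_2^2\dot\Psi_p(u)\big)\zeta_2\Big),
\]
and the cross term $2s_1s_2\dot\Psi_p(u)$ does not vanish in the slow-down region. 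Consequently your one-line proof of (2), ``$Dg_xv=(a\xi_1,a^{-1}\xi_2)$ with $a>1$'', breaks down precisely where the question is nontrivial, and with it the slope-product estimate you use for (3) and the uniform-convergence argument for (4).

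The paper does not give its own proof of this proposition; it cites \cite{BaPeNUH}. However, the correct mechanism for cone invariance in the slow-down region is visible in the proof of the stronger Lemma~\ref{strong-cones}: one writes the evolution of the slope $\eta=\xi_2/\xi_1$ as
\[
\frac{d\eta}{dt}=-2\log\lambda\Big((1+\eta^2)s_1s_2\dot\Psi_p(u)+\big(\Psi_p(u)+u\,\dot\Psi_p(u)\big)\eta\Big),
\]
and checks the sign of $d\eta/dt$ at $\eta=\pm1$ (or $\pm\alpha$) using $\Psi_p>0$, $\dot\Psi_p\ge0$. This is genuinely a computation about the full, non-diagonal linearization; the invariance of $C^\pm$ is not automatic from a product structure. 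Likewise, the contraction of cone angles needed for (3) is handled in the paper by Lemma~\ref{angle-product}, again via the slope ODE and not via a diagonal factorization. Your instinct that the near-singularity estimates of Section~5 are what control the degeneration is correct, but they must be applied to the actual variational system rather than to a diagonal caricature of it.
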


We will need a stronger condition on cone invariance. For $x \in M \setminus S$ and for $0 < \alpha < 1$, define the families of cones $K^+(x)$ and $K^-(x)$ by: 
\begin{align*}
K^+(x) &= \left\{v = (\xi_1, \xi_2) \in T_x M: |\xi_2| < \alpha |\xi_1|\right\}, \\
K^-(x) &= \left\{v = (\xi_1, \xi_2) \in T_x M : |\xi_1| < \alpha |\xi_2|\right\}.
\end{align*}
In the original construction of pseudo-Anosov diffeomorphisms yielding Proposition \ref{regular-cones}, we have $\alpha = 1$. But for certain later arguments, we will require $\alpha < 1$. 

\begin{lemma}\label{strong-cones}
	There exists a $0 < \alpha_0 < 1$ such that for all $\alpha_0 < \alpha < 1$, and for all $x \in M$, 
	\begin{align*}
	Dg_x K^+(x) \subseteq K^+(g(x)) \quad \textrm{and} \quad Dg^{-1}_{g(x)} K^-(g(x)) \subseteq K^-(x).
	\end{align*}
\end{lemma}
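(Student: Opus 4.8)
The plan is to reduce the statement to two regimes --- the uniformly hyperbolic region away from the singular neighborhoods, and the slowed-down singular sectors --- and to verify cone invariance in each separately, then take $\alpha_0$ to be the larger of the two thresholds produced. First I would treat the region $M \setminus \Us_0$, where $g = f$ and hence $Dg = Df$. By Proposition \ref{PAH-differential}, in the $(s_1, s_2)$ coordinates adapted to the unstable/stable foliations we have $Df_x(\xi_1, \xi_2) = (\lambda \xi_1, \xi_2/\lambda)$. Thus a vector $v = (\xi_1, \xi_2)$ with $|\xi_2| < \alpha|\xi_1|$ maps to $(\lambda\xi_1, \xi_2/\lambda)$, and $|\xi_2/\lambda| < (\alpha/\lambda^2)|\lambda\xi_1| < \alpha|\lambda\xi_1|$ since $\lambda > 1$. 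So on $M \setminus \Us_0$ the cone $K^+$ is strictly contracted for \emph{every} $0 < \alpha < 1$ --- no lower bound on $\alpha$ is needed there --- and symmetrically for $K^-$ under $Dg^{-1}$. The only subtlety is that the cone field must be expressed in the $(\xi_1,\xi_2)$ coordinates defined in \eqref{sector-coords}, and I would note that these coincide (up to the smooth, bounded conformal factor $\Psi_p$) with the foliation coordinates, so the displayed inclusions are coordinate-independent in the relevant sense.

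Next I would handle the slowed-down sectors $S^s_{kj} \cap \oldphi_k^{-1}(D_{r_0})$. Here, in the coordinates $w = s_1 + is_2 = \Phi_{kj}\circ\oldphi_k(x)$, the map $g$ is the time-1 map $G_p$ of the vector field \eqref{slowdown vector field}, and the adapted tangent coordinates $(\xi_1,\xi_2)$ are exactly the $w$-coordinates rescaled by $\Psi_p(s_1^2+s_2^2)$. Because the vector field \eqref{slowdown vector field} is just $(\log\lambda)\,\Psi_p(|s|^2)$ times the linear hyperbolic field $(\dot s_1, \dot s_2) = ((\log\lambda)s_1, -(\log\lambda)s_2)$, i.e. a \emph{time reparametrization} of the linear field along each orbit, its time-1 map has the form $G_p(s) = (e^{\sigma(s)} s_1,\, e^{-\sigma(s)} s_2)$ for a positive function $\sigma$ (the integrated reparametrized time) plus the contribution of the non-autonomy. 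I would linearize: $DG_p$ sends $(\xi_1,\xi_2)$ to $(e^{\sigma}\xi_1 + (\partial\sigma)\cdot(\ldots)\,s_1,\ e^{-\sigma}\xi_2 + (\partial\sigma)\cdot(\ldots)\,s_2)$, where the correction terms involve the gradient of $\sigma$. The key point is that $\sigma \geq 0$ with $\sigma > 0$ off the singularity, so the diagonal part already maps $K^+$ strictly inside itself; the off-diagonal correction terms are small (they vanish at the singularity and are controlled by $\dot\Psi_p$), so choosing $\alpha$ close enough to $1$ --- equivalently, choosing $\alpha_0$ close enough to $1$ --- absorbs them. I would also need to handle the transitional annulus $r_1 \le |z| \le r_0$ where $g$ interpolates between $F$ and $f$; there $G_p$ is still the time-1 map of \eqref{slowdown vector field} with $\Psi_p$ between its two constant values, so the same estimate applies, and on a neighborhood of $\partial D_{\tilde r_0}$ we have $G_p = F$ exactly, matching the outside region.

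The main obstacle, and the place I would spend the most care, is the slowed-down sector estimate near the singularity: one must show that the off-diagonal (non-autonomous) terms in $DG_p$ do not destroy cone invariance even though the diagonal expansion/contraction rates $e^{\pm\sigma}$ degenerate to $1$ at $x_k$. The resolution is that these correction terms degenerate \emph{at least as fast}: differentiating $G_p$ and using that $\Psi_p(u) = c_p u^{(p-2)/p}$ near $0$ (so $u\dot\Psi_p(u)/\Psi_p(u)$ is bounded), one gets that the ratio of the off-diagonal term to the surviving part of the diagonal term is bounded by a constant depending only on $p$ and on $\sup u\dot\Psi_p/\Psi_p$, uniformly as $x \to x_k$. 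This is precisely the kind of computation carried out in Section 5 of \cite{PSZ17} for the Katok map, and I would cite or adapt those estimates; the adjustment here is the dependence on the number of prongs $p(k)$, so the uniform constant is taken over the finitely many singularities. Setting $\alpha_0 := \max_k(1 - c/(1+c))$ with $c$ the worst such ratio, every $\alpha \in (\alpha_0,1)$ works, and combining with the (threshold-free) estimate on $M\setminus\Us_0$ and the exact equality $g=f$ near $\partial\tilde\Us_0$ gives the claim for all $x \in M$.
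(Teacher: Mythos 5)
Your proposal has the right skeleton (trivial outside $\tilde\Us_0$; work in the $w = \Phi_{kj}\circ\oldphi_k$ coordinates; reduce everything to a property of the slow-down $\Psi_p$), and the paper does indeed exploit the same key fact you identify, namely that $u\dot\Psi_p(u)/\Psi_p(u)$ is bounded (equivalently, $\Psi_p(u)/\dot\Psi_p(u) \geq c\,u$). But the step where the two arguments actually have to \emph{prove something} is left asserted rather than carried out, and the heuristics you use to justify it are misleading.

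The specific gap: after writing $G_p(s)=(e^{\sigma(s)}s_1,e^{-\sigma(s)}s_2)$, you say the diagonal part "already maps $K^+$ strictly inside itself" and that the off-diagonal correction terms are "small (they vanish at the singularity and are controlled by $\dot\Psi_p$)", so that taking $\alpha_0$ close to $1$ absorbs them. Neither half of this is safe. The diagonal contraction of the slope is by a factor $e^{-2\sigma}$, and $\sigma\to 0$ as $s\to 0$, so the margin by which the diagonal part is "strictly inside" degenerates at the singularity. At the same time, the off-diagonal entries of $DG_p$ contain factors $s_i\,\partial_{s_j}\sigma$, and since $\dot\Psi_p(u)\sim u^{-2/p}$ blows up as $u\to 0$, these terms are of the \emph{same order} as the degenerating margin, not negligibly smaller. (Your own formula for $\alpha_0$, $\alpha_0=\max_k(1-c/(1+c))=1/(1+c)$, runs the wrong way: larger $c$ would force $\alpha_0\to 0$, whereas absorbing a larger correction should push $\alpha_0\to 1$.) So the claim that the off-diagonal terms are absorbed requires a genuine quantitative comparison, and that comparison is precisely what is missing. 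The time-reparametrization observation, while true for the \emph{orbits} of the vector field, does not transfer to the variational dynamics, because differentiating the base-point–dependent reparametrization time is exactly what creates the dangerous cross terms; so it does not give a shortcut.

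The paper's actual proof avoids this issue by arguing at the level of the flow rather than the time-1 map. It writes the ODE (\ref{tangent-change}) for the slope $\eta(t)=\xi_2(t)/\xi_1(t)$ along a trajectory of (\ref{slowdown vector field}), uses the two-sided bound $\Psi_p(u)/\dot\Psi_p(u)\geq \frac{p}{p-2}(\tilde r_1/\tilde r_0)^2\,u$ to bound the "bad" term $s_1s_2\dot\Psi_p(1+\eta^2)$ against the "good" term $\Psi_p\,\eta$, and concludes that $\dot\eta<0$ whenever $\eta\in(\alpha_0,1)$, with $\alpha_0$ determined by the explicit quadratic $\psi(\eta)=\frac{p}{p-2}(\tilde r_1/\tilde r_0)^2-\frac12(\eta-1)^2>0$. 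Cone invariance of $K^+_0$ under the time-1 map is then a barrier argument: a trajectory with $|\eta(0)|<\alpha$ cannot cross the lines $\eta=\pm\alpha$ for any $\alpha\in(\alpha_0,1)$ because $\dot\eta$ points inward there. This is the computation your sketch would need to supply (either for the flow-slope ODE as in the paper, or in the time-1-map guise you chose); without it, the proof is incomplete.
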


\begin{proof}
	We prove invariance only for $K^+(x)$; the invariance of the stable cones is proven similarly by considering $g^{-1}$. Assume $x \in \tilde\Us_0$, as the result is clearly true outside of $\tilde\Us_0$. Consider the vector field (\ref{slowdown vector field}) defined on $\C$. The variational equations for (\ref{slowdown vector field}) give us
	\[
	\frac{d\zeta_1}{dt} = \log\lambda\left(\left(\Psi_p\left(u\right) + 2s_1^2 \dot{\Psi}_p\left(u\right)\right)\xi_1 + 2s_1 s_2 \dot{\Psi}_p\left(u\right)\xi_2\right)
	\]
	and
	\[
	\frac{d\zeta_2}{dt} = -\log\lambda\left(2s_1 s_2 \dot{\Psi}_p\left(u\right)\xi_1 + \left(\Psi_p\left(u\right) + 2s_2^2 \dot{\Psi}_p\left(u\right)\right)\xi_2  \right).
	\]
	where $u := s_1^2 + s_2^2$. The ``slope'' $\eta := \xi_2/\xi_1$ of a tangent vector in $\C$ changes under the flow of (\ref{slowdown vector field}) as:
	\begin{equation}\label{tangent-change}
	\frac{d\eta}{dt} = -2\log\lambda\left(\left(1+\eta^2\right)s_1s_2\dot{\Psi}_p(u) + \left(\Psi_p(u) + \left(s_1^2 + s_2^2\right)\dot\Psi_p\right)\eta\right)
	\end{equation}
	Suppose $\tilde r_1^2 \leq u \leq \tilde r_0^2$. Since $\Psi_p > 0$, and $\dot{\Psi}_p > 0$ is decreasing, we have: 
	\[
	\frac{\Psi_p(u)}{\dot{\Psi}_p(u)} \geq \frac{\Psi_p(\tilde r_1^2)}{\dot{\Psi}_p(\tilde r_1^2)} = \frac{p}{p-2} \tilde r_1^2 \geq \frac{p}{p-2}\left(\frac{\tilde r_1}{\tilde r_0}\right)^2 u.
	\]
	Meanwhile, if $0 < u < \tilde r_1^2$, we have
	\[
	\frac{\Psi_p(u)}{\dot{\Psi}_p(u)} = \frac{p}{p-2}u \geq \frac{p}{p-2}\left(\frac{\tilde r_1}{\tilde r_0}\right)^2 u.
	\]
	If $\eta > 0$, this gives us
	\begin{align*}
	\frac{d\eta}{dt} &\leq -2\log\lambda\dot{\Psi}_p(u)\left(\left(1+\eta^2\right)s_1s_2 + \left(1+\frac{p}{p-2}\left(\frac{\tilde r_1}{\tilde r_2}\right)^2\right) \left(s_1^2 + s_2^2\right)\eta\right) \\
	&= -2\log\lambda\dot\Psi_p(u)\left(\left(\left(1+\frac{p}{p-2}\left(\frac{\tilde r_1}{\tilde r_0}\right)^2\right)\eta - \frac 1 2 \left(1+\eta^2\right)\right)\left(s_1^2 + s_2^2\right) \right. \\
	&\qquad \qquad \qquad \qquad \qquad \qquad \qquad \qquad \qquad \qquad \qquad +\frac 1 2 \left(1+\eta^2\right)\left(s_1 + s_2\right)^2\Bigg) \\
	&\leq -2\log\lambda\dot{\Psi}_p(u)\psi(\eta)\left(s_1^2+s_2^2\right),
	\end{align*}
	where $\psi(\eta) := \frac{p}{p-2}\left(\frac{\tilde r_1}{\tilde r_2}\right)^2 - \frac 1 2 (\eta-1)^2$. Since $\psi(1) > 0$, there is a $\alpha_0 \in (0,1)$ with $\psi(\eta) > 0$ for $\alpha_0 < \eta < 1$. Therefore $\frac{d\eta}{dt} < 0$ for $\alpha_0 < \eta < 1$. For $\eta < 0$, we have
	\begin{align*}
	\frac{d\eta}{dt} &= 2\log\lambda\left(\left(\Psi_p(u) + \left(s_1^2 + s_2^2\right)\dot\Psi_p(u)\right)|\eta| - s_1 s_2 \left(1+\eta^2\right) \dot{\Psi}_p(u\right) \\
	&\geq 2\log\lambda \dot{\Psi}_p(u)\left(\left(1+\frac{p}{p-2}\left(\frac{\tilde r_1}{\tilde r_0}\right)^2\right)\left(s_1^2 + s_2^2\right)|\eta| - s_1 s_2 \left(1+\eta^2\right)\right).
	\end{align*}
	A similar argument will show $\frac{d\eta}{dt} > 0$ for $-1 < \eta < -\alpha_0$. Letting $\alpha = \eta$, for $z \in \C$, we have $D(G_p)_z K^+_0(z) \subseteq K^+_0(G_p(z))$ and $D(G_p)^{-1}_{G_p(z)} K^-_0(G_p(z)) \subseteq K^-_0(z)$, where 
	\[
	K_0^+(z) = \left\{(\zeta_1, \zeta_2) \in T_z\C : |\zeta_2| < \alpha|\zeta_1|\right\},
	\]
	\[
	K^-_0(z) = \left\{(\zeta_1, \zeta_2) \in T_z\C : |\zeta_1| < \alpha|\zeta_2|\right\}.
	\]
	Note $\alpha_0$ does not depend on the distance of $z \in \C$ from 0. Applying the coordinate map $\oldphi_k^{-1} \circ \Phi_{kj}^{-1} : \{z : \mathrm{Re}(z) \geq 0\} \cap D_{\tilde a_k} \to M$, the cones $K^+(x)$ and $K^-(x)$ defined using the coordinates in (\ref{sector-coords}) for $T_xM$  satisfy the same invariance property as $K^+_0$ and $K^-_0$. This proves the lemma. 
\end{proof}
	
	\section{Main results}
	
	We begin by defining the relevant ergodic properties under consideration. Given a continuous potential function $\phi : M \to \R$, a probability measure $\mu_\phi$ on $M$ is an \emph{equilibrium measure} for $\phi$ if 
	\[
	P_g(\phi) = h_{\mu_\phi}(g) + \int_M \phi \, d\mu_\phi,
	\]
	where $h_{\mu_\phi}(g)$ is the metric entropy of $g$ with respect to $\mu_\phi$, and $P_g(\phi)$ is the topological pressure of $\phi$; that is, $P_g(\phi)$ is the supremum of $h_\mu(g) + \int_M \phi \, d\mu$ over all $g$-invariant probability measures $\mu$ on $M$.
	
	A special instance of equilibrium measures are known as SRB measures. Given a (uniformly, nonuniformly, or partially) hyperbolic function $f : M \to M$ on a Riemannian manifold $M$, an $f$-invariant Borel probability measure $\mu$ on $M$ is called an \emph{SRB measure} if $f$ admits positive Lyapunov exponents $\mu$-almost everywhere, and if the conditional measures of $\mu$ on the unstable submanifolds are absolutely continuous with respect to the Riemannian leaf volume. 
	
	Additionally, we say that $g$ has \emph{exponential decay of correlations} with respect to a measure $\mu \in \mathcal{M}(g, M)$ and a class of functions $\mathcal H$ on $M$ if there exists $\kappa \in (0,1)$ such that for any $h_1, h_2 \in \mathcal H$, 
	\[
	\left| \int h_1\left(g^n(x)\right) h_2(x) \, d\mu(x) - \int h_1(x) \, d\mu(x) \int h_2(x)\,d\mu(x)\right| \leq C\kappa^n
	\]
	for some $C = C(h_1, h_2) > 0$. Furthermore, $g$ is said to satisfy the \emph{Central Limit Theorem} (CLT) for a class $\mathcal H$ of functions if for any $h \in \mathcal H$ that is not a coboundary (ie. $h \neq h' \circ g - h'$ for any $h' \in \mathcal H$), there exists $\sigma > 0$ such that 
	\[
	\lim_{n \to \infty} \mu\bigg\{\frac{1}{\sqrt n} \sum_{i=0}^{n-1} \Big(h(g^i(x)) - \int h \, d\mu \Big) < t \bigg\} = \frac 1{\sigma \sqrt{2\pi}} \int_{-\infty}^{t} e^{-\tau^2/2\sigma^2} \, d\tau.
	\]
	The family of potential functions we consider are the \emph{geometric $t$-potentials} defined by $\phi_t(x) = -t\log\left|Dg_x\big|_{E^u(x)}\right|$. Although the unstable distribution $E^u$ does not continuously extend to the singularities, the differential $Dg_{x_0}$ is the identity at each singularity $x_0$, so $\phi_t$ continuously extends to the singularities; in particular, $\phi_t(x_0) = 0$ for each singularity $x_0$. So the geometric $t$-potential is well-defined in this setting. 
	
	Our result shows there is a $t_0 < 0$ for which every $t \in (t_0, 1)$ admits a unique equilibrium state $\mu_{\phi_t} =: \mu_t$ for the potential $\phi_t : M \to \R$. When $t=0$, $\phi_0 \equiv 0$, so the equilibrium measure $\mu_0$ satisfies $P_g(0) = h_{\mu_0}(g)$, and so $\mu_0$ is the unique measure of maximal entropy for $g$. 
	
	We now state our main result. 
	
	\begin{theorem}\label{main-theorem}
		Consider a pseudo-Anosov diffeomoprhism $g : M \to M$ on a compact Riemannian manifold $M$. The following statements hold: 
		\begin{enumerate}
			\item Given any $t_0 < 0$, we may take $r_0 > 0$ in the construction of $g$ so that for any $t \in (t_0, 1)$, there is a unique equilibrium measure $\mu_t$ associated to $\phi_t$. This equilibrium measure has exponential decay of correlations and satisfies the Central Limit Theorem with respect to a class of functions containing all H\"older continuous functions on $M$, and is Bernoulli. Additionally, the pressure function $t \mapsto P_g(\phi_t)$ is real analytic in the open interval $(t_0, 1)$.
			\item For $t=1$, there are two classes of equilibrium measures associated to $\phi_1$: convex combinations of Dirac measures concentrated at the singularities, and a unique invariant SRB measure $\mu$.
			\item For $t > 1$, the equilibrium measures associated to $\phi_t$ are precisely the convex combinations of Dirac measures concentrated at the singularities.  
		\end{enumerate}
	\end{theorem}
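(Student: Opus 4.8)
I would prove the three parts separately, exploiting the division they suggest: parts (2) and (3) are comparatively soft and rest on Ruelle's inequality, the Ledrappier--Young description of equilibrium states for the geometric potential, and uniqueness of the SRB measure, while part (1) is the substance and is obtained by realizing $g$ as a Young diffeomorphism and applying the thermodynamic formalism for such maps recalled in Section 6.

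\emph{Plan for (1).} First, in Section 7, I would construct a set $\Lambda\subset M\setminus S$ carrying a hyperbolic product structure, lying in the region where $g$ agrees with the genuine uniformly hyperbolic pseudo-Anosov dynamics (outside the slowed-down neighborhood $\tilde\Us_0$), together with an inducing time $R\colon\Lambda\to\N$ for which $g^{R}\colon\Lambda\to\Lambda$ is uniformly hyperbolic with bounded distortion; the product structure, hyperbolicity, and distortion control come from Propositions \ref{regular-cones} and \ref{PAD measure} and Lemma \ref{strong-cones}. The crucial quantitative input, supplied by the local analysis near the singularities of Section 5, is a tail estimate --- $\mu_1(\{x\in\Lambda:R(x)>n\})$ decays polynomially in $n$ at a rate $q>1$ governed by the prong numbers (one finds $q=\min_k p(k)/(p(k)-2)$) --- together with the matching control that on $\{R=n\}$ the induced potential $\phi_t^{\Lambda}=\sum_{j=0}^{R-1}\phi_t\circ g^{j}$ is comparable to $-t\,c\log n$ for a constant $c>0$ (an orbit lingering $n$ steps near a $p$-pronged singularity expands unstable length polynomially in $n$). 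Given these, I would check that $\phi_t^{\Lambda}$ is locally H\"older on the associated Young tower and that the summability/pressure-gap hypotheses of the abstract theorems of Section 6 hold. This is precisely where $t<1$ enters: one has $P_g(\phi_1)=0$ (see below) and the pressure is strictly decreasing in $t$, so $P_g(\phi_t)>0=\phi_t(x_k)$ for $t<1$, and this strict gap between the pressure and the value of $\phi_t$ at the singularities is what makes the relevant weighted series converge; for $t\ge 1$ it degenerates and the Dirac masses take over. Given $t_0<0$, one shrinks $r_0$ (hence the slowed-down region) so that the constants arising in controlling $\phi_t^{\Lambda}$ are small enough for these hypotheses to hold uniformly over $t\in(t_0,1)$. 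The theorems of Section 6 then yield a unique equilibrium state $\mu_t$ for $\phi_t$ lifting to the tower; since $P_g(\phi_t)>0$, the height function of the tower has an \emph{exponential} tail with respect to $\mu_t$ although it has only polynomial tail with respect to $\mu_1$, so exponential decay of correlations and the Central Limit Theorem for H\"older observables follow from the standard theory of exponential Young towers, as does the Bernoulli property once one notes the return times are not all divisible by a common integer. Finally, since $\phi_t=t\cdot(-\log|Dg|_{E^u}|)$ is affine in $t$, the associated transfer operators on the tower depend analytically on $t$ and keep their spectral gap throughout $(t_0,1)$, so $t\mapsto P_g(\phi_t)$ is real analytic there.

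\emph{Plan for (2) and (3).} Since $E^u$ is one-dimensional and $Dg$-invariant, $\log|Dg^{n}_x|_{E^u}|=S_n(\log|Dg|_{E^u}|)(x)$, so for an ergodic invariant $\mu$ the unstable Lyapunov exponent is $\int\log|Dg|_{E^u}|\,d\mu$, which is nonnegative and vanishes iff $\mu$ is carried by $S$ (as $|Dg_x|_{E^u}|>1$ off $S$ while $Dg_{x_k}=\mathrm{Id}$). Ruelle's inequality gives $h_\mu(g)\le\int\log|Dg|_{E^u}|\,d\mu$ for every invariant $\mu$, hence $h_\mu(g)+\int\phi_t\,d\mu\le(1-t)\int\log|Dg|_{E^u}|\,d\mu\le 0$ for $t\ge 1$, with equality for every $\mu$ supported on $S$; thus $P_g(\phi_t)=0$ for all $t\ge 1$. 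For $t>1$, equality forces $\int\log|Dg|_{E^u}|\,d\mu=0$, i.e. $\mu$ is a convex combination of the Dirac masses $\delta_{x_k}$, which is (3). For $t=1$, equality becomes $h_\mu(g)=\int\log|Dg|_{E^u}|\,d\mu$, i.e. equality in Ruelle's inequality; passing to ergodic components (entropy and the integral are both affine, so a.e. component again realizes equality) each component is either a $\delta_{x_k}$, with zero exponent, or has positive exponent and hence, by the Ledrappier--Young theorem for surface diffeomorphisms, is an SRB measure. It then remains to see the SRB measure is unique: the invariant volume $\mu_1$ of Proposition \ref{PAD measure} has nonzero exponents almost everywhere (as $\mu_1(S)=0$ and $g$ is uniformly hyperbolic away from a neighborhood of $S$) and absolutely continuous unstable conditionals, is ergodic by Pesin theory and topological transitivity, and is equivalent to Lebesgue, so its basin has full volume; any other ergodic SRB measure would have a positive-volume basin disjoint from it, a contradiction. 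Therefore the equilibrium states for $\phi_1$ are precisely the convex combinations of $\mu_1$ and the $\delta_{x_k}$, which is (2).

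\emph{Main obstacle.} The real work is in part (1), and within it the two features genuinely new relative to the toral Katok map of \cite{PSZ17}: building the hyperbolic product set $\Lambda$ and establishing the bounded-distortion and local-H\"older estimates for $g^{R}$ and $\phi_t^{\Lambda}$ \emph{without} a global lift to $\R^2$ and in the presence of singular foliations --- forcing the analysis into the stable and unstable sectors of Section 3 rather than honest coordinate charts --- and extracting from the slow-down analysis of Section 5 the sharp, prong-number-dependent tail and induced-potential estimates, in particular confirming that the summability conditions underlying the Young-tower formalism hold throughout $(t_0,1)$ and degenerate exactly at $t=1$, which is what produces the phase transition.
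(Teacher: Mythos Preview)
Your treatment of parts (2) and (3) is essentially the paper's: Ruelle's inequality gives $P_g(\phi_t)\le 0$ for $t\ge 1$ with equality realized by the Dirac masses, and equality at $t=1$ forces either zero exponent or the entropy formula, the latter characterizing SRB measures by Ledrappier--Young. The paper cites \cite{RH2TU} for uniqueness of the SRB measure rather than your basin argument, but both are standard.

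For part (1) your route diverges from the paper's in a way that matters. The paper does \emph{not} argue via polynomial tails of $R$ with respect to $\mu_1$ and a pressure gap; instead it invokes Proposition~\ref{Young tower nuke} (from \cite{PSZ17}), whose key hypothesis is the purely combinatorial bound $S_n:=\#\{\Lambda_i^s:\tau_i=n\}\le e^{hn}$ for some $h<h_{\mu_1}(g)$. This bound comes from the Markov partition for $f$ (Lemma~\ref{number-of-inducing-sets}) and is transported to $g$ by the conjugacy. The number $t_0$ is then given explicitly by \eqref{t_0 def}, and $t_0\to-\infty$ as $r_0\to 0$ because $\log\lambda_1-h_{\mu_1}(g)\to 0$: the paper shows $\log\lambda_1\le\log\lambda+2\epsilon$ and $h_{\mu_1}(g)\ge\log\lambda-\epsilon$ via \eqref{entropy-difference}. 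Exponential decay and the CLT come directly from Proposition~\ref{Young tower nuke}(3) after verifying the arithmetic condition and \eqref{4.2 bound}, not from an exponential tail of $R$ with respect to $\mu_t$.

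There is also a genuine gap in your mechanism. Your claim that on $\{R=n\}$ the induced potential satisfies $\phi_t^\Lambda\sim -tc\log n$ is not correct: the first return time to $P$ (the base, chosen well away from the singularities) is not the same as the time spent in a single excursion near a singularity. An orbit with $R=n$ may spend almost all of those $n$ iterates in the uniformly hyperbolic region, giving $J^uF\approx\lambda^n$ and $\phi_t^\Lambda\approx -tn\log\lambda$, or it may make one long excursion near a singularity, giving the polynomial behavior you describe; and everything in between occurs. So $\phi_t^\Lambda$ on $\{R=n\}$ ranges from order $\log n$ to order $n$, and your summability/pressure-gap heuristic as stated does not close. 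The paper sidesteps this entirely by working with the count $S_n$ and the uniform bound $\frac{1}{\tau_i}\log|J^uF|\le\log\lambda_1$, which is exactly what the PSZ machinery needs.
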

	
	\begin{remark}
		Uniqueness of the measure $\mu_t$ for $t \in (t_0, 1)$ implies this measure is ergodic, but in fact, Theorem \ref{main-theorem} gives us that this measure is Bernoulli. 
	\end{remark}
	
	\begin{remark}
		Taking $t=0$, this theorem shows that the dynamical system $(M, g)$ admits a unique measure of maximal entropy that is Bernoulli, has exponential decay of correlations, and satisfies the Central Limit Theorem. 
	\end{remark}
	
	\begin{remark}
		Although we know $t \mapsto P_g(\phi_t)$ is real analytic in $(t_0, 1)$, we do not know about the behavior of $P_g(\phi_t)$ for $t \leq t_0$. In particular, it is not known whether $(M, g, \phi_t)$ admits a phase transition at $t=t_0$.\footnote{For the Katok map, it is shown in \cite{Wang} that for sufficiently small values of the parameters $\alpha>0$ and $r>0$, the Katok map has a unique equilibrium measure $\mu_t$ corresponding to the geometric potential $\varphi_t$ for all values of $t<1$.} 
	\end{remark}
	
	\section{Dynamics near singularities}
	
	In this section, we discuss the dynamical properties of pseudo-Anosov diffeomorphisms, considering both their global behavior as well as their behavior near singularities. The thermodynamic constructions we will develop in Sections 6 and 7 require bounds on how quickly nearby orbits diverge from each other. For this reason, the estimates and inequalities collected in this section will become important tools to examine how nearby orbits behave in neighborhoods of the singularities.
	
	Several of the technical calculations made here are similar to the calculations performed for the Katok map in \cite{PSZ17}. However, they are carried out here for the reader's convenience, as well as the fact that the slowdown function in the Katok map uses different constants depending on the radius of the slowed-down neighborhood (by contrast, our slowdown function depends not on the radius of the slowdown, but on the number of prongs of the singularity). 
	
	Our first two technical estimates concern how long an orbit remains in a neighborhood of a singularity. Recall our definitions $\tilde r_j = (2/p)r^{p/2}_j$ for $j=0,1$. In particular, $\tilde r_0$ and $\tilde r_1$ depend on $p$, and thus depend on $k$ for $k=1,\ldots,m$. 
	
	\begin{lemma}\label{annular bound in C}
		There exists a $T_p > 0$, depending on $p$, $\lambda$, $r_0$, and $r_1$, so that for any solution $s(t)$ of \emph{(\ref{slowdown vector field})} with $s(0) \in D_{\tilde r_0}$, 
		\[
		\max\left\{t > 0 : s(t) \in D_{\tilde r_0} \setminus D_{\tilde r_1} \right\} < T_p.
		\]
	\end{lemma}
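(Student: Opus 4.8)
The plan is to work in the $(s_1,s_2)$-coordinates on $D_{\tilde r_0}\subset\C$ and track how the quantity $u(t)=s_1(t)^2+s_2(t)^2$ evolves under the flow of the vector field \eqref{slowdown vector field}. The key observation is that, although $u$ itself is not monotone (the flow contracts in $s_2$ and expands in $s_1$), the situation is governed by a single scalar ODE once we understand the sign structure: the $s_1$-component grows and the $s_2$-component shrinks, so an orbit that is in the annulus $D_{\tilde r_0}\setminus D_{\tilde r_1}$ must eventually leave it by having $s_1$ grow large enough to push $|z|$ above $\tilde r_0$, unless it is trapped asymptotically near the $s_1=0$ axis. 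The point of the lemma is exactly to rule out an infinite sojourn.

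First I would record the evolution of $u$: differentiating, $\dot u = 2(\log\lambda)(s_1^2-s_2^2)\,\Psi_p(u)$. This alone does not give monotonicity, so instead I would look at $v(t):=s_1(t)^2$, for which $\dot v = 2(\log\lambda)\,v\,\Psi_p(u)\ge 0$, i.e. $v$ is nondecreasing, and strictly increasing wherever $v>0$ and $u>0$. On the annulus $\tilde r_1^2\le u\le \tilde r_0^2$ we have a uniform lower bound $\Psi_p(u)\ge \Psi_p(\tilde r_1^2)=:c_p>0$ by monotonicity of $\Psi_p$ (property (3)) together with its explicit form (property (1)). So as long as the orbit stays in the annulus, $\dot v\ge 2(\log\lambda)c_p\,v$, which forces $v(t)\ge v(0)e^{2(\log\lambda)c_p t}$ whenever $v(0)>0$; hence $v$, and therefore $u\ge v$, exceeds $\tilde r_0^2$ after a time bounded by $T_p:=\frac{1}{2(\log\lambda)c_p}\log\!\big(\tilde r_0^2/v(0)\big)$, provided $v(0)$ is bounded below. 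The remaining issue is orbits with $v(0)=s_1(0)^2$ very small (or zero) but still in the annulus, i.e. starting near the stable axis; for these the multiplicative estimate degenerates.

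To handle the small-$s_1$ case I would argue that such an orbit cannot have been in the annulus for long \emph{in the past} either: run the flow backwards. Under $G_p^{-1}$ the roles reverse, $s_2$ grows and $s_1$ shrinks, so $w(t):=s_2(t)^2$ satisfies $\frac{d}{dt}w\ge 2(\log\lambda)c_p\,w$ while in the annulus, giving the same exponential lower bound for $w$ as time runs forward. Since in the annulus we always have $\max\{v,w\}\ge u/2\ge \tilde r_1^2/2$, at any instant $t_*$ in the sojourn either $v(t_*)$ or $w(t_*)$ is bounded below by $\tilde r_1^2/2$; if it is $v$, the forward estimate exits the annulus within a bounded time, and if it is $w$, then going backward $w$ only grew, so it stays $\ge\tilde r_1^2/2$, meaning the past exit already occurred within bounded time — and a total sojourn in a bounded annulus that is bounded both forward and backward from every interior instant is bounded overall. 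Packaging this gives the uniform bound $T_p$ depending only on $p,\lambda,r_0,r_1$ (through $c_p$, $\tilde r_0$, $\tilde r_1$).

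The main obstacle I anticipate is precisely the degeneracy near the stable prong $\{s_1=0\}$: the naive Gr\"onwall estimate on $v$ blows up the bound as $v(0)\to 0$, so the argument must exploit that $s_1^2+s_2^2$ cannot be concentrated in $s_1$ \emph{and} remain in the annulus, i.e. one must play the forward estimate for $s_1$ against the backward estimate for $s_2$. Making the ``bounded forward and backward from each instant $\Rightarrow$ bounded total'' step clean — with a single constant $T_p$ — is the part that needs care; a tidy formulation is to let $t_*$ be any time with $s(t_*)$ in the annulus, note $\max\{s_1(t_*)^2,s_2(t_*)^2\}\ge \tilde r_1^2/2$, and conclude the entry and exit times relative to $t_*$ are each at most $\frac{1}{2(\log\lambda)c_p}\log(2\tilde r_0^2/\tilde r_1^2)$, so the full sojourn has length at most twice that. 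One should also double-check the edge cases $s(0)$ already outside $D_{\tilde r_1}$ versus inside, and that property (3) indeed gives $\Psi_p(\tilde r_1^2)>0$ (it does, since $\Psi_p(\tilde r_1^2)=(p/2)^{(2p-4)/p}(\tilde r_1^2)^{(p-2)/p}>0$).
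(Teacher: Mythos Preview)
Your approach is correct and genuinely different from the paper's. The paper exploits the flow invariant $s_1 s_2=\mathrm{const}$ to split into two cases: when $s_1 s_2\ge\tfrac12\tilde r_1^2$ the trajectory never enters $D_{\tilde r_1}$, and the resulting lower bound $s_1^2\ge\tilde r_1^4/4\tilde r_0^2$ gives a \emph{constant} lower bound on $\tfrac{d}{dt}(s_1^2)$, hence a linear-time exit estimate; when $s_1 s_2<\tfrac12\tilde r_1^2$, a symmetric argument (using $s_2^2\ge\tfrac12\tilde r_1^2$ while $s_1<s_2$) bounds the time to enter $D_{\tilde r_1}$, and by the flow's symmetry the time from re-emergence to exit of $D_{\tilde r_0}$. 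Your argument never touches the hyperbola invariant: it uses only the uniform bound $\Psi_p(u)\ge c_p$ on the annulus to get exponential Gr\"onwall growth of $v=s_1^2$ forward in time and of $w=s_2^2$ backward, together with the pigeonhole observation $\max\{v,w\}\ge u/2\ge\tilde r_1^2/2$. This is conceptually cleaner and would go through unchanged for any bounded positive reparametrization of the linear saddle, not just one preserving $s_1s_2$; the paper's route, by contrast, yields more explicit constants tied to the geometry.

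One wrinkle in your final packaging deserves attention: from an \emph{arbitrary} $t_*$ in the sojourn you only get \emph{one} of the two bounds (forward exit if $v(t_*)\ge\tilde r_1^2/2$, backward exit if $w(t_*)\ge\tilde r_1^2/2$), not both, so the sentence ``the entry and exit times relative to $t_*$ are each at most\ldots'' is not justified as written. The clean fix is to pick $t_*$ with care rather than arbitrarily: since $v$ is increasing and $w$ is decreasing, either the sojourn interval contains a time with $v=w$ (then both equal $u/2\ge\tilde r_1^2/2$ there, and both the forward and backward estimates apply from that instant), or one of $v>w$, $v<w$ holds throughout the sojourn, in which case a single Gr\"onwall estimate from the appropriate endpoint already bounds the full length.
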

	
	\begin{proof} The value $s_1 s_2$ is invariant under the flow. If $s_1 s_2 \geq \frac 1 2 \tilde r_1^2$, then when $s_1 = s_2$, the minimum value of $s_1^2 + s_2^2$ is $\geq \tilde r_1^2$, and the trajectory never enters $D_{\tilde r_1}$. If $s_1 s_2 < \frac 1 2 \tilde r_1^2$, the trajectory either will enter $D_{\tilde r_1}$ or has already entered $D_{\tilde r_1}$ and is on its way out of $D_{\tilde r_0}$. 
		
		\textbf{Case 1:} $s_1 s_2 \geq \frac 1 2 \tilde r_1^2$. Since $\tilde r_0^2 \geq s_1^2 + s_2^2 \geq s_2^2$, we have $\frac 1 4 \tilde r_1^4 \leq s_1^2 s_2^2 \leq s^2_1 \tilde r_0^2$, so $s_1^2 \geq \tilde r_1^4 / 4\tilde r_0^2$. So, since $\Psi_p$ is an increasing function, 
		\begin{align*}
		\frac d{dt} \left(s_1^2\right) &= 2s_1^2 \Psi_p\left(s_1^2 + s_2^2\right) \log\lambda \geq \frac{\tilde r_1^4}{2\tilde r_0^2} \Psi_p\left(\tilde r_1^2\right) \log\lambda.
		\end{align*}
		It follows that the time $T$ it takes for $s_1^2$ to reach $\tilde r_0^2$ from $s_1^2(0) \geq \tilde r_1^4/4\tilde r_0^2$ satisfies
		\[
		T \leq \frac{\tilde r_0^2 - \frac{\tilde r_1^4}{4\tilde r_0^2}}{\frac{\tilde r_1^4}{2\tilde r_0^2}\Psi_p\left(\tilde r_1^2\right)\log\lambda} = \frac{4r_0^{2p} - r_1^{2p}}{2r_1^{3p-2}\log\lambda}. 
		\]
		
		\textbf{Case 2:} $s_1 s_2 < \frac 1 2 \tilde r_1^2$. Assume that $s_1 < s_2$, ensuring that the trajectory will enter $D_{\tilde r_1}$. If we can prove there is a uniform time bound $T$ before which this happens, then by symmetry of the vector field, the same $T$ is an upper bound for the time it takes this trajectory to exit $D_{\tilde r_0}$ when $s_1 > s_2$. 
		
		We will in fact establish a bound on how long it takes $s_2^2$ to decrease from $s_2^2(0)$ to $\frac 1 2 \tilde r_1^2$ when $s_1 < s_2$. For then, because $s_1 s_2 < \frac 1 2 \tilde r_1^2$, by the time $s_2^2 = \frac 1 2 \tilde r_1^2$, the trajectory will already have entered $D_{\tilde r_1}$. So, $s_2^2 \geq \frac 1 2 \tilde r_1^2$, and since in this case $s_1^2 + s_2^2 \geq \frac 1 2 \tilde r_1^2$, we have
		\[
		\frac d{dt}\left(s_2^2\right) = -2s_2^2 \Psi_p\left(s_1^2 + s_2^2\right)\log\lambda \leq -\tilde r_1^2 \Psi_p \left(\textstyle\frac 1 2 \tilde r_1^2\right) \log\lambda.
		\]
		It follows that the time $T$ it takes for $s_2^2$ to reach $\frac 1 2 \tilde r_1^2$ from $s_2^2(0) \leq \tilde r_0^2$ satisfies
		\[
		T \leq \frac{\tilde r_0^2 - \frac 1 2 \tilde r_1^2}{\tilde r_1^2 \Psi_p\left(\frac 1 2 \tilde r_1^2\right) \log\lambda} = 2^{(p-2)/2} \frac{2r_0^p - r_1^p}{2r_1^{2p-2} \log\lambda}.
		\] 
	\end{proof} 
	
	\begin{lemma}\label{annular bound in M}
		There exists a $T \in \Z$, depending on $r_0$ and $\lambda$, so that for any $x \in \tilde \Us_0 := \union_{k=1}^m \oldphi_k^{-1}\left(D_{\tilde r_0}\right) \subset M$, we have 
		\[
		\max\left\{ N > 0 : g^n(x) \in \union_{k=1}^m \oldphi_k^{-1}\left(D_{\tilde r_0} \setminus D_{\tilde r_1}\right) \: \textrm{for all } n = 0, \ldots N\right\} \leq T.
		\]
	\end{lemma}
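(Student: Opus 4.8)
The plan is to deduce the statement from the continuous-time estimate of Lemma~\ref{annular bound in C}. Fix $x \in \tilde\Us_0$ and an integer $N > 0$ with $g^n(x) \in \union_{k=1}^m \oldphi_k^{-1}(D_{\tilde r_0}\setminus D_{\tilde r_1})$ for all $n = 0, 1, \ldots, N$, and bound $N$ independently of $x$. Since this annular region lies in $\tilde\Us_0 \subset \Us_0$ and, for $r_0$ small, the singular neighborhoods $\oldphi_k^{-1}(D_{r_0})$ with $1 \le k \le m$ are pairwise disjoint, $x$ lies in exactly one of them, say the one with chart $\oldphi_k$ and $p = p(k)$, and in a single stable sector $S^s_{kj}$ (if $x$ lies on a stable prong, either adjacent sector will do). In the rectifying coordinates $w = s_1 + i s_2 = \Phi_{kj}\circ\oldphi_k$ on that sector, the map $g$ is the time-one map $G_p$ of the vector field~(\ref{slowdown vector field}); as $\dot s_1 = (\log\lambda) s_1 \Psi_p(s_1^2+s_2^2)$ has the sign of $s_1$, the closed half-plane containing the $w$-image of the sector is forward invariant under $G_p$, and the imaginary axis, which contains the stable prongs bounding the sector, is invariant. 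Hence the whole orbit segment $x, g(x), \ldots, g^N(x)$ stays inside $S^s_{kj}$, and writing $w_n = \Phi_{kj}\oldphi_k(g^n(x))$ we get $w_n = s(n)$ for $0 \le n \le N$, where $s(\cdot)$ solves~(\ref{slowdown vector field}) with $s(0) = w_0$.

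Next I would record what ``$g^n(x)$ lies in the annulus'' means for $s(n)$. The map $z \mapsto (2/p) z^{p/2}$ defining $\Phi_{kj}$ is monotone in modulus, so in the $w$-plane this condition is simply that $s(n)$ lies in a fixed annulus bounded by two circles centred at the origin. The proof of Lemma~\ref{annular bound in C} uses only that $\Psi_p > 0$, that $\Psi_p$ is nondecreasing, and that $s_1 s_2$ is constant along the flow, and it applies to any pair of nested circles centred at $0$; it therefore yields a constant $T_p > 0$, depending only on $p, \lambda, r_0, r_1$, such that every trajectory of~(\ref{slowdown vector field}) spends total time less than $T_p$ in that annulus. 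Consequently the set of integers $n \ge 0$ with $s(n)$ in the annulus has at most $\lceil T_p \rceil$ elements; as $0, 1, \ldots, N$ all belong to it, $N + 1 \le \lceil T_p \rceil$.

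Finally, there are only finitely many singularities, hence finitely many prong numbers $p(1), \ldots, p(m)$, so $T := \max_{1 \le k \le m} \lceil T_{p(k)} \rceil$ is an integer depending only on the construction data $r_0, r_1, \lambda$ (the $p(k)$ being determined once $M$ and $f$ are fixed), and $N \le T$ in every case. The main obstacle here is not a hard estimate — the analytic content is already contained in Lemma~\ref{annular bound in C} — but the bookkeeping between the two coordinate systems on a singular neighborhood: the chart $\oldphi_k$ in which $\tilde\Us_0$ and the radii $\tilde r_0, \tilde r_1$ are defined, versus the rectifying chart $\Phi_{kj}\circ\oldphi_k$ in which~(\ref{slowdown vector field}) and Lemma~\ref{annular bound in C} live, together with the check that an orbit segment confined to $\tilde\Us_0$ cannot migrate between stable sectors or between the neighborhoods of distinct singularities.
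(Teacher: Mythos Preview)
Your argument is correct and follows the same route as the paper, which simply takes $T = \max\{T_{p(k)} : 1 \le k \le m\}$ and cites Lemma~\ref{annular bound in C}. In fact your treatment is more careful than the paper's one-line proof: you track the passage from the chart $\oldphi_k$ to the rectifying chart $\Phi_{kj}\circ\oldphi_k$, note that the annulus $D_{\tilde r_0}\setminus D_{\tilde r_1}$ in $z$-coordinates becomes a (possibly different) concentric annulus in $w$-coordinates, and observe that the proof of Lemma~\ref{annular bound in C} goes through for any such pair of radii---a point the paper leaves implicit.
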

	\begin{proof}
		This follows from Lemma \ref{annular bound in C} after taking $T = \max\{T_{p(k)} : k = 1, \ldots, m\}$. 
	\end{proof}
	
	Next, we will establish bounds on how quickly nearby points will diverge while remaining near the singularities. The main lemma that demonstrates this bound is Lemma \ref{spread-in-slowdown lemma}. 
	
	\begin{lemma}\label{dij-estimate}
		For $i,j = 1,2$ define the functions $d_{ij} : D_{\tilde r_1} \to \R$ by
		\[
		d_{ij}(s_1, s_2) = \frac{\del^2}{\del s_i \del s_j} \left( s_2 \Psi_p\left(s_1^2 + s_2^2\right)\right).
		\]
		Then,
		\[
		\max_{i,j = 1,2} |d_{ij}(s_1, s_2)| \leq \frac{6p-12}{p}\left(\frac p 2\right)^{(2p-4)/p} \left(s_1^2 + s_2^2\right)^{(p-4)/2p}.
		\]
	\end{lemma}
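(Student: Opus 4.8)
The plan is to reduce the statement to an explicit computation, taking advantage of the fact that on the disc $D_{\tilde r_1}$ the slow-down function has the elementary closed form $\Psi_p(u) = c\,u^{\alpha}$, where I abbreviate $c := (p/2)^{(2p-4)/p}$ and $\alpha := (p-2)/p$; note $\alpha \in (0,1)$ for every $p \geq 3$. Writing $h(s_1,s_2) := s_2\Psi_p(s_1^2+s_2^2) = c\,s_2 u^{\alpha}$ with $u := s_1^2 + s_2^2$, I would first compute the first-order partials, $\del_1 h = 2\alpha c\,s_1 s_2 u^{\alpha-1}$ and $\del_2 h = c\,u^{\alpha-1}\big(u + 2\alpha s_2^2\big)$, and then differentiate once more, using the chain and product rules, to obtain the three second partials
\begin{align*}
d_{11} &= 2\alpha c\,s_2 u^{\alpha-2}\big(u + 2(\alpha-1)s_1^2\big), \\
d_{12} = d_{21} &= 2\alpha c\,s_1 u^{\alpha-2}\big(u + 2(\alpha-1)s_2^2\big), \\
d_{22} &= 2\alpha c\,s_2 u^{\alpha-2}\big(3u + 2(\alpha-1)s_2^2\big).
\end{align*}

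Next I would estimate each factor on $D_{\tilde r_1}$. Since $|s_i| \leq u^{1/2}$ and $0 \leq s_i^2 \leq u$, and since $\alpha - 1 < 0$ (so the terms $2(\alpha-1)s_i^2$ are nonpositive), the bracket $u + 2(\alpha-1)s_i^2$ lies between $(2\alpha-1)u$ and $u$ and hence has modulus at most $u$ (as $|2\alpha - 1| = |p-4|/p < 1$), while $3u + 2(\alpha-1)s_2^2$ lies between $(1+2\alpha)u$ and $3u$ and so has modulus at most $3u$. Combining these with $|s_i|\,u^{\alpha-2}\cdot u \leq u^{\alpha-1/2}$ and the identity $\alpha - \tfrac12 = \tfrac{p-4}{2p}$ gives $|d_{11}|, |d_{12}| \leq 2\alpha c\,u^{(p-4)/(2p)}$ and $|d_{22}| \leq 6\alpha c\,u^{(p-4)/(2p)}$. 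Since $6\alpha = (6p-12)/p$ and $c = (p/2)^{(2p-4)/p}$, the largest of the three bounds is exactly the asserted one (and is attained, up to these estimates, by $d_{22}$).

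The argument is essentially routine differentiation and estimation; the only points that call for any care are the bookkeeping of the constants and keeping track of the sign of $\alpha - 1$, which is precisely what makes the bracket bounds hold uniformly over all $p \geq 3$ — in particular for $p = 3$, where $2\alpha - 1 = -\tfrac13$ is negative. I do not anticipate a genuine obstacle here.
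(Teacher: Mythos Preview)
Your proof is correct and follows essentially the same approach as the paper's: both compute the three second partials explicitly from the closed form $\Psi_p(u) = c\,u^{\alpha}$ on $D_{\tilde r_1}$, then bound each bracket using $|s_i| \leq u^{1/2}$ and the sign of $\alpha-1$. Your use of the abbreviations $c$ and $\alpha$ makes the bookkeeping cleaner than the paper's (which carries the explicit exponents throughout and phrases the bracket bound as $-2 \leq -4s_i^2/(pu) \leq 0$ rather than your equivalent $|2\alpha-1|<1$), but the underlying computation and estimates are identical.
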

	
	\begin{proof}
		Recall that for $u \leq \tilde r_1^2$, we have $\Psi_p(u) = (p/2)^{(2p-4)/p} u^{(p-2)/p}$. So, 
		\begin{align*}
		\frac{\del}{\del s_1}\left(s_2 \Psi_p\left(s_1^2 + s_2^2\right)\right) &= \frac{2p-4}{p}\left(\frac p 2\right)^{(2p-4)/p} s_1 s_2\left(s_1^2 + s_2^2\right)^{-2/p}, \quad \textrm{and} \\
		\frac{\del}{\del s_2}\left(s_2 \Psi_p\left(s_1^2 + s_2^2\right)\right) &= \frac{2p-4}{p}\left(\frac p 2\right)^{(2p-4)/p} s_2^2\left(s_1^2 + s_2^2\right)^{-2/p}\\
		&\blank\blank + \left(\frac p 2\right)^{(2p-4)/p} \left(s_1^2 + s_2^2\right)^{(p-2)/p}.
		\end{align*}
		Note $|s_1|^2 \leq \sqrt{s_1^2 + s_2^2}$, and since $p \geq 3$, 
		\[
		-2 \leq -\frac{4s_1^2}{p\left(s_1^2 + s_2^2\right)} \leq 0.
		\]
		Therefore, for all $(s_1, s_2) \in D_{\tilde r_1}$, 
		\begin{align*}
		\left|d_{11}\left(s_1, s_2\right)\right| &= \frac{2p-4}{p}\left(\frac p 2\right)^{(2p-4)/p} \left| \frac{\del}{\del s_1} s_1 s_2 \left(s_1^2 + s_2^2\right)^{-2/p}\right| \\
		&= \frac{2p-4}{p}\left(\frac p 2\right)^{(2p-4)/p} \left| s_2 \left(s_1^2 + s_2^2\right)^{-2/p} - \frac 4 p s_1^2 s_2 \left(s_1^2 + s_2^2\right)^{-(p+2)/p}\right| \\
		&= \frac{2p-4}{p}\left(\frac p 2\right)^{(2p-4)/p} |s_2| \left(s_1^2 + s_2^2\right)^{-2/p}\left| 1 - \frac{4s_1^2}{p\left(s_1^2 + s_2^2\right)}\right| \\
		&\leq \frac{2p-4}{p}\left(\frac p 2\right)^{(2p-4)/p}\left(s_1^2 + s_2^2\right)^{(p-4)/2p}.
		\end{align*}
		A similar argument applies for $d_{12} = d_{21}$ and for $d_{22}$, though in $d_{22}$ we use the estimate $-2 \leq 4s_1^2/3p\left(s_1^2+s_2^2\right)$ instead: 
		\begin{align*}
		\left|d_{12}\left(s_1, s_2\right)\right| &= \frac{2p-4}{p}\left(\frac p 2\right)^{(2p-4)/p} |s_1| \left(s_1^2 + s_2^2\right)^{-2/p}\left| 1 - \frac{4s_2^2}{p\left(s_1^2 + s_2^2\right)}\right| \\ 
		&\leq \frac{2p-4}{p}\left(\frac p 2\right)^{(2p-4)/p}\left(s_1^2 + s_2^2\right)^{(p-4)/2p}, 
		\end{align*}
		\begin{align*}
		\left|d_{22}\left(s_1,s_2\right)\right| &= \frac{6p-12}{p} \left(\frac p 2\right)^{(2p-4)/p}\left|s_2\right| \left(s_1^2 + s_2^2\right)^{-2/p} \left| 1 - \frac{4s_2^2}{3p\left(s_1^2 + s_2^2\right)}\right| \\
		&\leq \frac{6p-12}{p}\left(\frac p 2\right)^{(2p-4)/p}\left(s_1^2 + s_2^2\right)^{(p-4)/2p}.
		\end{align*}
	\end{proof}
	
	Let $s(t) = \big( s_1(t), \, s_2(t)\big)$ be a solution to (\ref{slowdown vector field}), and assume $s(t)$ is defined in the unique interval $[0,T]$ for which $G_p^{-1}(s(0)), G_p(s(T)) \not\in D_{\tilde r_1}$ and $s(t) \in \overline D_{\tilde r_1}$ for $0 \leq t \leq T$. In particular, this means $s(0), s(T) \in \partial D_{\tilde r_1}$. (Recall $G_p$ is the time-1 map of the vector field (\ref{slowdown vector field}).) Further denote $T_1 = T/2$, so that if $s_1(t) > 0$ and $s_2(t) > 0$ for $t \in [0,T]$, we have $s_1(t) \leq s_2(t)$ for $t \in [0,T_1]$ and $s_1(t) \geq s_1(t)$ for $t \in [T_1, T]$. 
	
	\begin{lemma}\label{s1 and s2 approx}
		Given a solution $s(t)$ to \emph{(\ref{slowdown vector field})}, and $T$ and $T_1$ defined above, we have the following inequalities: 
		\begin{enumerate}[label=\emph{(\alph*)}]
			\item $|s_1(t)| \leq |s_1(b)| \left(1+C_0 s_1(b)^{(2p-4)/p}(b-t)\right)^{-p/(2p-4)}$, $\quad 0 \leq t \leq b \leq T$;
			\item $|s_2(t)| \leq |s_2(a)| \left(1+C_0s_2(a)^{(2p-4)/p} (t-a)\right)^{-p/(2p-4)}$, $\quad 0 \leq a \leq t \leq T$;
			\item $|s_2(t)| \geq |s_2(a)|\left(1+2^{(p-2)/p} C_0 s_2(a)^{(2p-4)/p}(t-a)\right)^{-p/(2p-4)}$,\\ $0 \leq a \leq t \leq T_1$; 
			\item $|s_1(t)| \geq |s_1(b)| \left(1+2^{(p-2)/p}C_0 s_1(b)^{(2p-4)/p}(b-t)\right)^{-p/(2p-4)},$ \\ $T_1 \leq t \leq b \leq T$;
		\end{enumerate}
		where $C_0 = \frac{2p-4}{p}\left(\frac p 2\right)^{(2p-4)/p}\log\lambda$. 
	\end{lemma}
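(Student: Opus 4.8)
The plan is to reduce all four estimates to a single scalar differential inequality of the form $\dot y \lessgtr \mp c\, y^{(3p-4)/p}$ and integrate it by differentiating a negative power of $y$. First I would reduce to the case $s_1(t)\ge 0$ and $s_2(t)\ge 0$ on $[0,T]$: the vector field (\ref{slowdown vector field}) is unchanged under $s_1\mapsto -s_1$ and under $s_2\mapsto -s_2$ separately, so $|s_1|$ and $|s_2|$ solve the same system. Since $T<\infty$, neither coordinate can vanish on $[0,T]$ (a trajectory on a coordinate axis is trapped in $\bar D_{\tilde r_1}$ forever, as in Lemma \ref{annular bound in C}), so $s_1 s_2$ is a positive constant and both coordinates stay bounded away from $0$; this is what makes the negative powers below legitimate. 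Because $s(t)\in\bar D_{\tilde r_1}$ throughout, I may substitute the explicit form $\Psi_p(u)=(p/2)^{(2p-4)/p}u^{(p-2)/p}$ and write the system as $\dot s_1 = C_0' s_1(s_1^2+s_2^2)^{(p-2)/p}$, $\dot s_2 = -C_0' s_2(s_1^2+s_2^2)^{(p-2)/p}$ with $C_0' := (\log\lambda)(p/2)^{(2p-4)/p}$, so that $C_0 = \tfrac{2p-4}{p}C_0' = \alpha C_0'$ where $\alpha := (2p-4)/p$.

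For the upper bounds (a) and (b): since $p\ge 3$ the exponent $(p-2)/p$ is positive, so $s_1^2+s_2^2\ge s_1^2$ (resp. $\ge s_2^2$) gives $\dot s_1\ge C_0' s_1^{1+\alpha}$ and $\dot s_2\le -C_0' s_2^{1+\alpha}$. Differentiating $s_1^{-\alpha}$ and $s_2^{-\alpha}$, and tracking the sign flip when dividing by the negative factor $-\alpha s_i^{-\alpha-1}$, converts these into $\tfrac{d}{dt}(s_1^{-\alpha})\le -\alpha C_0'$ and $\tfrac{d}{dt}(s_2^{-\alpha})\ge \alpha C_0'$, which I would integrate from $t$ to $b$ and from $a$ to $t$ respectively. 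Rearranging and using $\alpha C_0'=C_0$ and $1/\alpha = p/(2p-4)$ produces exactly (a) and (b); these use only $s_1^2+s_2^2\ge\max(s_1^2,s_2^2)$, which holds everywhere, so they are valid on the full ranges stated.

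For the lower bounds (c) and (d) I would use the complementary estimate, which is available only on a half-interval. On $[0,T_1]$ one has $s_1\le s_2$, hence $s_1^2+s_2^2\le 2s_2^2$ and $\dot s_2\ge -2^{(p-2)/p}C_0' s_2^{1+\alpha}$; differentiating $s_2^{-\alpha}$ gives $\tfrac{d}{dt}(s_2^{-\alpha})\le 2^{(p-2)/p}\alpha C_0'$, and integrating from $a$ to $t$ yields (c) with the extra factor $2^{(p-2)/p}C_0$. Symmetrically, on $[T_1,T]$ one has $s_2\le s_1$, so $s_1^2+s_2^2\le 2s_1^2$, $\dot s_1\le 2^{(p-2)/p}C_0' s_1^{1+\alpha}$, and integrating the resulting inequality for $\tfrac{d}{dt}(s_1^{-\alpha})$ from $t$ to $b$ gives (d). This explains why (c) and (d) are stated only on $[0,T_1]$ and $[T_1,T]$: the bound $s_1^2+s_2^2\le 2\max(s_1,s_2)^2$ is useless until one knows which coordinate dominates, and the definition of $T_1$ is precisely the bookkeeping of that dominance.

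I do not expect a genuine obstacle here. The only routine points requiring care are the reduction to positive coordinates (so negative powers make sense), the sign flips when dividing the differential inequalities by $-\alpha s_i^{-\alpha-1}$, and checking that the constants line up exactly — that $\alpha C_0'=C_0$ and that $-p/(2p-4)$ is precisely $-1/\alpha$ — so that the four displayed formulas come out verbatim rather than merely up to a constant.
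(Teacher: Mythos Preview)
Your approach is correct and is essentially identical to the paper's: reduce by symmetry to positive coordinates, use $s_1^2+s_2^2\ge s_i^2$ (for the upper bounds on all of $[0,T]$) and $s_1^2+s_2^2\le 2\max(s_1,s_2)^2$ (for the lower bounds on the appropriate half-interval) to get scalar differential inequalities for $s_i$, then integrate $s_i^{-(3p-4)/p}\dot s_i$ --- your phrasing via $\tfrac{d}{dt}(s_i^{-\alpha})$ is the same computation. The paper does not pause over the nonvanishing of $s_1,s_2$ as you do, but that extra care is harmless and the constants and ranges match exactly.
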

	
	\begin{proof}
		By symmetry, we may assume $s_1(t) > 0$ and $s_2(t) > 0$ for $t \in [0,T]$. Then using the facts that $s_1^2 + s_2^2 \geq s_i^2$ for $i=1,2$, and that $\Psi_p(u) = (p/2)^{(2p-4)/p}u^{(p-2)/p}$ for $0 \leq u \leq \tilde r_1^2$, (\ref{slowdown vector field}) implies
		\begin{align*}
		\frac{d}{dt} s_1(t) &\geq \left( \frac p 2 \right)^{(2p-4)/p} s_1(t)^{(3p-4)/p}\log\lambda, \quad \textrm{and} \\
		\frac{d}{dt} s_2(t) &\leq -\left(\frac p 2 \right)^{(2p-4)/p} s_2(t)^{(3p-4)/p} \log\lambda. 
		\end{align*}
		In particular, this gives us 
		\begin{align*}
		s_1(t)^{-(3p-4)/p} \frac{d}{dt}s_1(t) &\geq \left(\frac p 2 \right)^{(2p-4)/p} \log\lambda, \quad \textrm{and} \\
		s_2(t)^{-(3p-4)/p} \frac{d}{dt}s_2(t) &\leq -\left(\frac p 2 \right)^{(2p-4)/p} \log\lambda.
		\end{align*}
		Integrating these expressions between $a$ and $b$, where $0 \leq a \leq b \leq T$, we get:
		\begin{align*}
		s_2(b)^{-(2p-4)/p} - s_2(a)^{-(2p-4)/p} &\geq C_0(b-a), \quad \textrm{and} \\
		s_1(b)^{-(2p-4)/p} - s_1(a)^{-(2p-4)/p} &\leq -C_0(b-a),
		\end{align*}
		where $C_0 = \frac{2p-4}{p}\left(\frac p 2\right)^{(2p-4)/p}\log\lambda$. From assuming that $s_i(t) > 0$, $i=1,2$, we get inequalities (a) and (b).
		
		Using the fact that $s_1(t) \leq s_2(t)$ for $0 \leq t \leq T_1 = \frac 1 2 T$ and $s_1(t) \geq s_2(t)$ for $T_1 \leq t \leq T$, we get:
		\begin{align*}
		s_1(t)^2 + s_2(t)^2 &\leq 2s_2(t)^2, \quad 0 \leq t \leq T_1; \\
		s_1(t)^2 + s_2(t)^2 &\leq 2s_1(t)^2, \quad T_1 \leq t \leq T.
		\end{align*}
		Once again, applying (\ref{slowdown vector field}) yields
		\begin{align*}
		\frac d{dt} s_1(t) &\leq 2^{(p-2)/p} \left( \frac p 2\right)^{(2p-4)/p}s_1(t)^{(3p-4)/p}\log\lambda, \quad T_1 \leq t \leq T, \\
		\frac d{dt} s_2(t) &\geq -2^{(p-2)/p} \left( \frac p 2\right)^{(2p-4)/p}s_2(t)^{(3p-4)/p}\log\lambda, \quad 0 \leq T_1 \leq T.
		\end{align*}
		Using the same integration strategy from $a$ to $b$ as before gives us 
		\begin{align*}
		s_1(b)^{-(2p-4)/p}-s_1(t)^{-(2p-4)/p} &\geq -2^{(p-2)/p}C_0(b-t), \quad T_1 \leq t \leq b \leq T; \\
		s_2(t)^{-(2p-4)/p}-s_2(a)^{-(2p-4)/p} &\leq 2^{(p-2)/p}C_0(t-a), \quad 0 \leq a \leq t \leq T_1.
		\end{align*}
		This gives us inequalities (c) and (d). 
	\end{proof}
	
	Now suppose $\tilde s(t) = \big( \tilde s_1(t), \, \tilde s_2(t)\big)$ is another solution of (\ref{slowdown vector field}) defined for $t \in [0,T]$. We will need an upper and lower bound for $\Delta s(t) := \tilde s(t) - s(t)$. Let $\Delta s_j(t) = \tilde s_j(t) - s_j(t)$, $j=1,2$. 
	
	\begin{lemma}\label{spread-in-slowdown lemma}
		Suppose $s_1(t) \neq 0 \neq s_2(t)$ for $t \in [0,T]$ and that $\Delta s_2(t) > 0$ for $t \in [0,T]$. Suppose further that $0 < \alpha < 1$ satisfies
		\begin{enumerate}[label=\emph{(\arabic*)}]
			\item $\left| \Delta s_1(t)\right| \leq \alpha \Delta s_2(t)$ for $t \in [0,T]$; 
			\item $\left| \frac{\Delta s_2(0)}{s_2(0)}\right| \leq \frac{1-\alpha}{72}$. 
		\end{enumerate}
		Then, 
		\begin{align*}
		\Delta s_2(t) &\leq \frac{\Delta s_2(0)}{s_2(0)} s_2(t) \left(1 + 2^{(p-2)/p}C_0s_2(0)^{(2p-4)/p}t\right)^{-\beta}, \qquad 0 \leq t \leq T_1, \\
		\Delta s_2(t) &\leq \frac{\Delta s_2(T_1)}{s_1(T_1)} s_1(t) \left( \frac{1+2^{(p-2)/p}C_0 s_1(b)^{(2p-4)/p}(b-t)}{1+2^{(p-2)/p}C_0 s_1(b)^{(2p-4)/p}(b-T_1)} \right)^{\beta}, \qquad T_1 \leq t \leq b \leq T, 
		\end{align*}
		where $\beta = 2^{-(3p-2)/p}(1-\alpha)$,
		and $C_0$ is the constant from Lemma \ref{s1 and s2 approx}. Furthermore, for $0 \leq a \leq T_1 \leq b \leq T$,
		\begin{equation}\label{deviation bound}
		\norm{\Delta s(b)} \leq \sqrt{1+\alpha^2} \frac{s_1(b)}{s_2(a)} \norm{\Delta s(a)}. 
		\end{equation}
	\end{lemma}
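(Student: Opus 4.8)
The plan is a Gronwall--type (barrier) argument for the single component $\Delta s_2$, carried out separately on $[0,T_1]$ and on $[T_1,T]$; the deviation bound \eqref{deviation bound} then follows by concatenating the two regimes. By the symmetries of \eqref{slowdown vector field} we may assume $s_1(t),s_2(t)>0$ for $t\in[0,T]$ (cf.\ the proof of Lemma \ref{s1 and s2 approx}), so that $s_1\le s_2$ on $[0,T_1]$ and $s_1\ge s_2$ on $[T_1,T]$. Write $\Pi(s_1,s_2):=s_2\Psi_p(s_1^2+s_2^2)$, so the second equation of \eqref{slowdown vector field} reads $\dot s_2=-(\log\lambda)\Pi(s)$ (similarly for $\tilde s$); note $\dot s_2/s_2=-(\log\lambda)\Psi_p(|s|^2)$ and $\dot s_1/s_1=(\log\lambda)\Psi_p(|s|^2)$. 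Subtracting and expanding $\Pi$ to second order about $s(t)$ — its second partials being precisely the functions $d_{ij}$ of Lemma \ref{dij-estimate} — gives, along the segment from $s(t)$ to $\tilde s(t)$ (which by hypothesis (2) lies in the power-law region $\{|z|\le\tilde r_1\}$ of $\Psi_p$),
\[
\dot{\Delta s}_2 = -(\log\lambda)\bigl[\partial_1\Pi(s)\,\Delta s_1+\partial_2\Pi(s)\,\Delta s_2+R\bigr],\qquad |R|\le 2\,\Bigl(\max_{i,j}|d_{ij}(\xi)|\Bigr)(\Delta s_2)^2,
\]
where hypothesis (1) was used to bound $(|\Delta s_1|+|\Delta s_2|)^2\le(1+\alpha)^2(\Delta s_2)^2\le4(\Delta s_2)^2$.

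First I would isolate the linear contraction. Using the explicit formulas for $\partial_1\Pi,\partial_2\Pi$ from the proof of Lemma \ref{dij-estimate} and $\Psi_p'\ge0$, one has $\partial_2\Pi(s)\ge\Psi_p(|s|^2)$ always; on $[0,T_1]$ also $\partial_2\Pi(s)\ge\tfrac{2p-2}{p}\Psi_p(|s|^2)$ and $|\partial_1\Pi(s)|\le\tfrac{p-2}{2(p-1)}\partial_2\Pi(s)$ (maximizing an elementary ratio over $|s_1|/|s_2|\in[0,1]$). Inserting $|\Delta s_1|\le\alpha\Delta s_2$ and writing $c_p:=\tfrac{p-2}{p}$, this yields
\[
\frac{d}{dt}\log\!\frac{\Delta s_2}{s_2}\le-(\log\lambda)\Psi_p(|s|^2)\,c_p(1-\alpha)+(\log\lambda)\frac{|R|}{\Delta s_2}\qquad\text{on }[0,T_1],
\]
and similarly, on $[T_1,T]$ (where now $s_1$ is the expanding coordinate and $|\partial_1\Pi(s)|\le\kappa_p\partial_2\Pi(s)$ with $\kappa_p=\tfrac{p-2}{\sqrt{p(3p-4)}}<1$), $\tfrac{d}{dt}\log(\Delta s_2/s_1)\le-(\log\lambda)\Psi_p(|s|^2)(2-\kappa_p\alpha)+(\log\lambda)|R|/\Delta s_2$. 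For the remainder, Lemma \ref{dij-estimate} gives $\max_{i,j}|d_{ij}(\xi)|\le C_p|s|^{(p-4)/p}$ (here $|\xi|$ is comparable to $|s|$ on the segment, by (2)), and since $\Psi_p(|s|^2)=(p/2)^{(2p-4)/p}|s|^{(2p-4)/p}$ one gets $(\log\lambda)|R|/\Delta s_2\le C_p'\,(\Delta s_2/m)\,(\log\lambda)\Psi_p(|s|^2)$, with $m=|s_2|$ (resp.\ $|s_1|$) and $C_p'$ controlled by the constant $\tfrac{6p-12}{p}(p/2)^{(2p-4)/p}$ of Lemma \ref{dij-estimate}. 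A continuous induction now closes: on the maximal subinterval where $\Delta s_2/m\le\tfrac{1-\alpha}{72}$, the factor $72$ forces $(\log\lambda)|R|/\Delta s_2$ to be at most a fixed small fraction (say $\tfrac14$) of the surplus contraction in the displays above, so $\tfrac{d}{dt}\log(\Delta s_2/m)<0$ there; hence $\Delta s_2/m\le\tfrac{1-\alpha}{72}$ holds on all of $[0,T_1]$ (resp.\ on $[T_1,T]$, using that at $t=T_1$ one already has $\Delta s_2/s_1=\Delta s_2/s_2\le\tfrac{1-\alpha}{72}$ by the $[0,T_1]$ bound, since $s_1(T_1)=s_2(T_1)$).

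With the bootstrap in force, the displayed inequalities hold on the whole intervals, and I would compare $q:=\Delta s_2/s_2$ with the explicit barrier $g(t):=\tfrac{\Delta s_2(0)}{s_2(0)}A(t)^{-\beta}$, $A(t):=1+2^{(p-2)/p}C_0 s_2(0)^{(2p-4)/p}t$, so $g(0)=q(0)$ and $\tfrac{d}{dt}\log g=-\beta\dot A/A$. Lemma \ref{s1 and s2 approx}(c) (with $a=0$) gives $A(t)\ge(s_2(0)/s_2(t))^{(2p-4)/p}$, hence $\tfrac{\dot A}{A}\le2^{(p-2)/p}C_0 s_2(t)^{(2p-4)/p}\le2^{(p-2)/p}\tfrac{2p-4}{p}(\log\lambda)\Psi_p(|s|^2)$; since the choice $\beta=2^{-(3p-2)/p}(1-\alpha)$ makes $\beta\cdot2^{(p-2)/p}\tfrac{2p-4}{p}=\tfrac12 c_p(1-\alpha)$, comparison with the previous paragraph gives $\tfrac{d}{dt}\log q<-\beta\dot A/A=\tfrac{d}{dt}\log g$ throughout $[0,T_1]$ (the surviving surplus, about $\tfrac34 c_p(1-\alpha)$, beats $\tfrac12 c_p(1-\alpha)$). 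Thus $\log(q/g)$ is decreasing from $0$, i.e.\ $q\le g$, which is the first asserted inequality. The second is obtained identically with $q=\Delta s_2/s_1$ and barrier $\tilde g(t)=\tfrac{\Delta s_2(T_1)}{s_1(T_1)}\bigl(B(t)/B(T_1)\bigr)^\beta$, $B(t)=1+2^{(p-2)/p}C_0 s_1(b)^{(2p-4)/p}(b-t)$, using Lemma \ref{s1 and s2 approx}(d) to bound $|\dot B/B|$ and the stronger $(2-\kappa_p\alpha)$ contraction there. Finally, for \eqref{deviation bound} with $0\le a\le T_1\le b\le T$: the first inequality (base $0$) gives $\Delta s_2(a)/s_2(a)\le\Delta s_2(0)/s_2(0)\le\tfrac{1-\alpha}{72}$, so it applies with base $a$ and yields $\Delta s_2(T_1)\le\tfrac{\Delta s_2(a)}{s_2(a)}s_2(T_1)$; the second gives $\Delta s_2(b)\le\tfrac{\Delta s_2(T_1)}{s_1(T_1)}s_1(b)$; as $s_1(T_1)=s_2(T_1)$, concatenation gives $\Delta s_2(b)\le\tfrac{\Delta s_2(a)}{s_2(a)}s_1(b)$, and then $\norm{\Delta s(b)}\le\sqrt{1+\alpha^2}\,\Delta s_2(b)$ (from (1)) together with $\Delta s_2(a)\le\norm{\Delta s(a)}$ give \eqref{deviation bound}.

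I expect the main obstacle to be the bookkeeping that makes the bootstrap and the barrier comparison close \emph{simultaneously and uniformly in $p\ge3$}: one must check that the surplus contraction generated by $1-\alpha$ dominates both the nonlinear remainder $R$ (this is where the constant $72$ in hypothesis (2) and the size of the bound in Lemma \ref{dij-estimate} enter) and the barrier term $-\beta\dot A/A$ (this is where the exact exponent $\beta=2^{-(3p-2)/p}(1-\alpha)$, together with the sharp decay rates of Lemma \ref{s1 and s2 approx}(c),(d), enter). A minor technical point to dispatch first is that $\tilde s(t)$ never leaves the region where $\Psi_p$ is a pure power, so that the explicit formulas for $\partial_i\Pi$ and Lemma \ref{dij-estimate} are valid along the entire segment $[s(t),\tilde s(t)]$; this follows from the smallness in hypothesis (2).
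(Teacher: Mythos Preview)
Your proposal is correct and follows essentially the same route as the paper's proof: both track the ratios $\kappa=\Delta s_2/s_2$ on $[0,T_1]$ and $\chi=\Delta s_2/s_1$ on $[T_1,T]$, expand $\Pi(s)=s_2\Psi_p(|s|^2)$ to second order with the remainder controlled by Lemma~\ref{dij-estimate}, run a continuity (bootstrap) argument to keep $\kappa,\chi\le(1-\alpha)/72$, and then integrate using the sharp decay rates of Lemma~\ref{s1 and s2 approx}(c),(d). Your barrier comparison for $q=\log\kappa$ against $\log g$ is exactly the integrated form of the Gr\"onwall step the paper carries out; your identity $\beta\cdot 2^{(p-2)/p}\cdot\frac{2p-4}{p}=\tfrac12 c_p(1-\alpha)$ is the same numerology that makes the exponent $\beta$ emerge in the paper, and your final concatenation for \eqref{deviation bound} is the paper's observation that $\kappa$ and $\chi$ are monotone (so $\kappa(T_1)\le\kappa(a)$ and $\chi(b)\le\chi(T_1)$, with $s_1(T_1)=s_2(T_1)$).
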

	
	\begin{proof}
		Assume $s_j(t) > 0$ for $j=1,2$; the other cases follow by symmetry. Further denote $u = s_1^2 + s_2^2$ and $\tilde u = \tilde s_1^2 + \tilde s_2^2$. Applying equation (\ref{slowdown vector field}) to the second Lagrange remainder of the function $(s_1, s_2) \mapsto s_2 \Psi_p\left(s_1^2 + s_2^2\right)$ centered at the point $(s_1, s_2)$, we get:
		\begin{align*}
		\frac{d}{dt} \Delta s_2 &= -\log\lambda\left( \tilde s_2 \Psi_p(\tilde u) - s_2 \Psi_p(u)\right) \\
		&= -\log\lambda \Bigg( \frac{\del}{\del s_1} \Big( s_2 \Psi_p(u)\Big) \Delta s_1 + \frac{\del}{\del s_2}\Big( s_2 \Psi_p(u)\Big) \Delta s_2 + \frac 1 2 \sum_{j,k = 1,2} d_{jk} \left(\xi_1, \xi_2\right) \Delta s_j \Delta s_k\Bigg) \\
		&= -\log\lambda \Bigg(2s_1 s_2 \dot\Psi_p(u)\Delta s_1 + \left( \Psi_p(u) + 2s_2^2 \dot\Psi_p(u)\right)\Delta s_2 \\
		&\qquad \qquad \qquad + \frac 1 2 \sum_{j,k = 1,2} d_{jk} \left(\xi_1, \xi_2\right) \Delta s_j \Delta s_k\Bigg),
		\end{align*}
		where $d_{jk}$ are as in Lemma (\ref{dij-estimate}) and $\xi = (\xi_1, \xi_2) \in D_{\tilde r_1}$ is such that $\xi_j$ lies between $s_j$ and $\tilde s_j$ for $j=1,2$. 
		It follows that 
		\begin{align*}
		\frac d{dt}\left(\frac{\Delta s_2}{s_2}\right) &= \frac 1{s_2} \frac d{dt} \Delta s_2 - \frac 1{s_2^2} \dot s_2 \Delta s_2 \\ 
		&= -\log\lambda \left(2s_1\dot\Psi_p(u) \Delta s_1 + \frac 1{s_2} \Psi_p(u) \Delta s_2 + 2s_2 \dot\Psi_p(u)\Delta s_2 \right) \\ 
		&\qquad -\frac{\log\lambda}{2} \sum_{j,k = 1,2} d_{jk} \left(\xi_1, \xi_2\right)\frac{ \Delta s_j \Delta s_k}{s_2}+\log\lambda \frac{1}{s_2} \Psi_p(u)\Delta s_2 \\
		&= -\frac{(2p-4)\log\lambda}{p}\left(\frac p 2\right)^{(2p-4)/p} u^{-2/p}\left(s_1\Delta s_1 + s_2 \Delta s_2\right) \\
		&\qquad -\frac{\log\lambda}{2} \sum_{j,k = 1,2} d_{jk} \left(\xi_1, \xi_2\right) \frac{\Delta s_j \Delta s_k}{s_2}. 
		\end{align*}
		Suppose $0 \leq t \leq T_1$, so that $0 < s_1(t) \leq s_2(t)$. Since $|\Delta s_1(t)| \leq \alpha\Delta s_2(t)$ by assumption, we get:
		\[
		s_1\Delta s_1 + s_2 \Delta s_2 \geq \left(-s_1\alpha + s_2\right) \Delta s_2 \geq (1-\alpha)s_2\Delta s_2.
		\]
		Lemma \ref{dij-estimate}  implies
		\begin{equation}\label{dij-sum-est}
		\sum_{j,k} d_{jk}\left(\xi_1, \xi_2\right) \Delta s_j \Delta s_k \geq -\frac{24p-48}{p}\left(\frac p 2\right)^{(2p-4)/p}\left(\xi_1^2 + \xi_2^2\right)^{(p-4)/2p}\left(\Delta s_2\right)^2.
		\end{equation}
		It follows from the above two inequalities that 
		\begin{align*}
		\frac d{dt}\left(\frac{\Delta s_2}{s_2}\right) &\leq -(1-\alpha)\frac{(2p-4)\log\lambda}{p}\left(\frac p 2\right)^{(2p-4)/p} \left(s_1^2 + s_2^2\right)^{-2/p} s_2\Delta s_2 \\
		&\qquad  +\frac{(12p-24)\log\lambda}{p} \left(\frac p 2\right)^{(2p-4)/p}\left(\xi_1^2 + \xi_2^2\right)^{(p-4)/2p}\frac{\left(\Delta s_2\right)^2}{s_2}.
		\end{align*}
		Since $s_1(t) \leq s_2(t)$ for $0 \leq t \leq T_1$, we have $s_2^2 \leq s_1^2 + s_2^2 \leq 2s_2^2$. Therefore, 
		\begin{align*}
		\frac d{dt}\left(\frac{\Delta s_2}{s_2}\right) &\leq -(1-\alpha)\frac{(2p-4)\log\lambda}{p}\left(\frac p 2\right)^{(2p-4)/p} \left(s_1^2 + s_2^2\right)^{(p-2)/p}\frac{s_2^2}{s_1^2 + s_2^2} \frac{\Delta s_2}{s_2} \\
		&\quad+\frac{(12p-24)\log\lambda}{p} \left(\frac p 2\right)^{(2p-4)/p}s_2^{(2p-4)/p}\left(\frac{\xi_1^2 + \xi_2^2}{s_2^2}\right)^{(p-4)/2p}\left(\frac{\Delta s_2}{s_2}\right)^2\\
		&\leq -(1-\alpha)\frac{(p-2)\log\lambda}{p}\left(\frac{ps_2} 2\right)^{(2p-4)/p}\frac{\Delta s_2}{s_2} \\
		&\quad+\frac{(12p-24)\log\lambda}{p} \left(\frac{ps_2} 2\right)^{(2p-4)/p}\left(\frac{\xi_1^2 + \xi_2^2}{s_2^2}\right)^{(p-4)/2p}\left(\frac{\Delta s_2}{s_2}\right)^2.
		\end{align*}
		Denoting $\kappa = \kappa(t) = \frac{\Delta s_2}{s_2}(t)$, we summarize:
		\begin{align*}
		\frac{d\kappa}{dt} &\leq  -(1-\alpha)\frac{(p-2)\log\lambda}{p}\left(\frac{ps_2} 2\right)^{(2p-4)/p} \kappa \\
		&\qquad+ \frac{(12p-24)\log\lambda}{p} \left(\frac{ps_2} 2\right)^{(2p-4)/p}\left(\frac{\xi_1^2 + \xi_2^2}{s_2^2}\right)^{(p-4)/2p} \kappa^2 \\
		&= -\frac{(p-2)\log\lambda}{p}\left(\frac{ps_2}{2}\right)^{(2p-4)/p}\kappa\left(1-\alpha-12\left(\frac{\xi_1^2+\xi_2^2}{s_2^2}\right)^{(p-4)/2p}\kappa\right) \numberthis \label{dkappa-approx} 
		\end{align*}
		Note $0 < s_2 \leq \xi_2 \leq \tilde s_2 = s_2 + \Delta s_2$, and $\xi_1 \leq s_1 + |\Delta s_1| \leq s_2 + \alpha\Delta s_2$. Therefore, 
		\begin{equation}\label{xi-est-preT}
		1 \leq \frac{\xi_2^2}{s_2^2} \leq \frac{\xi_1^2 + \xi_2^2}{s_2^2} \leq \frac{\left(s_2+\alpha\Delta s_2\right)^2 + \left(s_2 + \Delta s_2\right)^2}{s_2^2} = (1+\alpha\kappa)^2 + (1+\kappa)^2 < 2(1+\kappa)^2.
		\end{equation}
		It follows that 
		\[
		\left( \frac{\xi_1^2 + \xi_2^2}{s_2^2}\right)^{(p-4)/2p} \leq \begin{cases}
		1 & \textrm{if } p=3,4; \\
		\left(2(1+\kappa)^2\right)^{(p-4)/2p} & \textrm{if } p\geq 5.
		\end{cases}
		\]
		Using Assumption (2), we observe that
		\[
		1-\alpha-12\left(\frac{\xi_1^2+\xi_2^2}{s_2^2}\right)^{(p-4)/2p}\kappa(0) \geq \frac{1-\alpha}2.
		\]
		Equation (\ref{dkappa-approx}) now implies 
		\[
		\frac{d\kappa}{dt}\bigg|_{t=0} \leq -\frac{(1-\alpha)(p-2)\log\lambda}{2p}\left(\frac{ps_2(0)}{2}\right)^{(2p-4)/p}\kappa(0) < 0.
		\]
		So $\kappa(t)$ satisfies 
		\begin{equation}\label{small-kappa}
		0 < \kappa(t) < \frac{1-\alpha}{72}
		\end{equation}
		for $0 \leq t < \delta$ for a small number $\delta > 0$. The same arguments as before now imply 
		\begin{equation}\label{dkappa-approx-2}
		\frac{d\kappa}{dt} \leq -\frac{(1-\alpha)(p-2)\log\lambda}{2p}\left(\frac{ps_2(t)}{2}\right)^{(2p-4)/p}\kappa(t) < 0
		\end{equation}
		for $0 \leq t < \delta$. Since $\kappa$ and $s_2$ are continuous and positive on $[0,T_1]$, the estimates (\ref{small-kappa}) and (\ref{dkappa-approx-2}) apply for $0 \leq t \leq T_1$. Applying Gr\"onwall's inequality to (\ref{dkappa-approx-2}) gives us for $0 \leq t \leq T_1$:
		\begin{equation}\label{Gronwell-1}
		\kappa(t) \leq \kappa(0) \exp\left( -\frac{(1-\alpha)(p-2)\log\lambda}{2p}\left(\frac{p}{2}\right)^{(2p-4)/p} \int_0^t s_2(\tau)^{(2p-4)/p}\,d\tau\right).
		\end{equation}
		Applying the third inequality in Lemma \ref{s1 and s2 approx} to this integral gives us:
		\begin{align*}
		\int_0^t s_2(\tau)^{(2p-4)/p}\,d\tau &\geq \int_0^t s_2(0)^{(2p-4)/p}\left(1+2^{(p-2)/p} C_0 s_2(0)^{(2p-4)/p}\tau\right)^{-1}\,d\tau \\
		&= \frac{1}{2^{(p-2)/p}C_0}\log \left( 1+2^{(p-2)/p} C_0 s_2(0)^{(2p-4)/p}t\right).
		\end{align*}
		Recalling that $C_0 = \frac{2p-4}{p}\left(\frac p 2\right)^{(2p-4)/p}\log\lambda$, (\ref{Gronwell-1}) now becomes: 
		\begin{align*}
		\kappa(t) &\leq \kappa(0) \exp\left( -\frac{(1-\alpha)}{2^{(3p-2)/p}} \log\left( 1+2^{(p-2)/p} C_0 s_2(0)^{(2p-4)/p}t\right)\right) \\
		&= \kappa(0) \left( 1+2^{(p-2)/p} C_0 s_2(0)^{(2p-4)/p}t\right)^{-\beta}, \numberthis\label{kappa-upper}
		\end{align*}
		giving us the first inequality of the lemma. 
		
		To prove the second inequality, arguing as before for $T_1 \leq t \leq T$, we get:
		\begin{align*}
		\frac d{dt}\Delta s_2 &= -\log\lambda \Bigg( \frac{\del}{\del s_1} \Big( s_2 \Psi_p(u)\Big) \Delta s_1 + \frac{\del}{\del s_2}\Big( s_2 \Psi_p(u)\Big) \Delta s_2 \\
		&\qquad \qquad \qquad + \frac 1 2 \sum_{j,k = 1,2} d_{jk} \left(\xi_1, \xi_2\right) \Delta s_j \Delta s_k\Bigg) 
		\end{align*}
		for $\xi = (\xi_1, \xi_2)$ satisfying $\min\left\{ s_j, \tilde s_j \right\} \leq \xi_j \leq \max \left\{s_j, \tilde s_j \right\}$. Thus, using assumption (1) and positivity of $s_i$, $\dot \Psi_p$, and $\Delta s_2$, 
		\begin{align*}
		\frac{d}{dt} \left( \frac{\Delta s_2}{s_1}\right) &= \frac 1{s_1} \frac d{dt} \Delta s_2 - \frac{1}{s_1^2}\dot s_1 \Delta s_2 \\
		&= -\log\lambda\left( 2s_1s_2 \dot\Psi_p(u) \frac{\Delta s_1}{s_1} + \left(2s_2^2 \dot\Psi_p(u)+ \Psi_p(u)\right) \frac{\Delta s_2}{s_1}\right) \\
		&\qquad -\frac 1 2 \log\lambda \sum_{j,k = 1,2} d_{jk} \left(\xi_1, \xi_2\right) \frac{\Delta s_j \Delta s_k}{s_1} - \log\lambda\Psi_p(u)\frac{\Delta s_2}{s_1} \\
		&\leq -2\log\lambda \left( \Psi_p(u) - \alpha s_1 s_2 \dot \Psi_p(u) + s_2^2 \dot \Psi_p(u)\right) \frac{\Delta s_2}{s_1} \\ &\qquad - \frac 1 2 \log\sum_{j,k} d_{j,k} \left(\xi_1, \xi_2\right) \frac{\Delta s_j \Delta s_k}{s_1} \\
		&\leq -2\log\lambda \left( \Psi_p(u) - \alpha s_1 s_2 \dot \Psi_p(u)\right)\frac{\Delta s_2}{s_1} - \frac{\log\lambda}2\sum_{j,k} d_{j,k} \left(\xi_1, \xi_2\right) \frac{\Delta s_j \Delta s_k}{s_1}.
		\end{align*}
		Observe that
		\begin{align*}
		\frac{\Psi_p}{\dot{\Psi}_p} - \alpha s_1 s_2 &= \frac p{p-2}\left(s_1^2 + s_2^2\right) - \alpha s_1 s_2 \geq \left( \frac p{p-2} - \frac \alpha 2 \right) \left(s_1^2 + s_2^2 \right)\\ 
		&\geq \frac{p(2-\alpha)}{2(p-2)} \left(s_1^2 + s_2^2\right).
		\end{align*}
		It follows, in particular, that
		\[
		\Psi_p(u) - \alpha s_1 s_2 \dot{\Psi}_p(u) \geq \left(\frac p 2 \right)^{(2p-4)/p} \frac{2-\alpha}2\left(s_1^2 + s_2^2\right)^{(p-2)/p}.
		\]
		Furthermore, applying the inequality in (\ref{dij-sum-est}), we get: 
		\begin{align*}
		\frac d{dt} \left( \frac{\Delta s_2}{s_1}\right) &\leq -\log\lambda \left(\frac p 2\right)^{(2p-4)/p} (2-\alpha)s_1^{(2p-4)/p} \frac{\Delta s_2}{s_1} \\
		&\quad + \log\lambda \left( \frac p 2 \right)^{(2p-4)/p} s_1^{(2p-4)/p} \frac{12(p-2)}p \left( \frac{\xi_1^2 + \xi_2^2}{s_1^2}\right)^{(p-4)/2p} \left(\frac{\Delta s_2}{s_1}\right)^2.
		\end{align*}
		In particular, if we denote $\chi(t) = \frac{\Delta s_2}{s_1}(t)$, we find that
		\begin{equation}\label{chi-est}
		\frac{d\chi}{dt} \leq -\log\lambda \left(\frac p 2\right)^{(2p-4)/p} s_1^{(2p-4)/p} \chi \left( 2-\alpha-\frac{12(p-2)}{p}\left( \frac{\xi_1^2 + \xi_2^2}{s_1^2}\right)^{(p-4)/2p}\chi\right).
		\end{equation}
		Recall that $\min\left\{s_j, \tilde s_j \right\} \leq \xi_j \leq \max\left\{s_j, \tilde s_j \right\}$, and that $\Delta s_j = \tilde s_j - s_j$ for $j=1,2$. Therefore, 
		\[
		s_j - |\Delta s_j| \leq \xi_j \leq s_j + |\Delta s_j|.
		\]
		In particular, since $|\Delta s_1| \leq \alpha \Delta s_2$ by assumption (1), we get: 
		\[
		\xi_1^2 + \xi_2^2 \geq \xi_1^2 \geq \left(s_1 - |\Delta s_1|\right)^2 \geq \left(s_1 - \alpha \Delta s_2\right)^2 = s_1^2 \left( 1 - \frac{\alpha \Delta s_2}{s_1}\right)^2 \geq s_1^2(1-\chi)^2.
		\]
		Furthermore, since $s_2(t) \leq s_1(t)$ whenever $T_1 \leq t \leq T$, we get: 
		\[
		\frac{\xi_1^2 + \xi_2^2}{s_1^2} \leq  \left( 1 + \frac{|\Delta s_1|}{s_1}\right)^2 + \left(\frac{s_2}{s_1} + \frac{\Delta s_2}{s_1}\right)^2 \leq \left(1+\alpha\chi\right)^2 + (1+\chi)^2 < 2(1+\chi)^2.
		\]
		It follows that: 
		\[
		\left( \frac{\xi_1^2 + \xi_2^2}{s_1^2}\right)^{(p-4)/2p} \leq \begin{cases}
		(1-\chi)^{(p-4)/p}, & p = 3,4; \\
		2^{(p-4)/2p}(1+\chi)^{(p-4)/p}, & p \geq 5.
		\end{cases}
		\]
		Since $s_1(T_1) = s_2(T_1)$, by the first estimate in this lemma and assumption (2), we find that: 
		\[
		0 \leq \chi(T_1) = \frac{\Delta s_2(T_1)}{s_1(T_1)} = \frac{\Delta s_2(T_1)}{s_2(T_1)} \leq \frac{\Delta s_2(0)}{s_2(0)} \leq \frac{1-\alpha}{72}.
		\]
		Again, applying assumption (2) gives us: 
		\[
		2-\alpha-\frac{12(p-2)}{p}\left(\frac{\xi_1^2 + \xi_2^2}{s_1^2}\right)^{(p-4)/2p}\chi(T_1) \geq \frac{1-\alpha}2.
		\]
		So (\ref{chi-est}) now becomes
		\begin{equation}\label{chi-est-T1}
		\frac{d\chi}{dt}\bigg|_{t=T_1} < -\frac{(1-\alpha)\log\lambda}{2}\left(\frac p 2\right)^{(2p-4)/p} s_1(T_1)^{(2p-4)/p} \chi(T_1) < 0.
		\end{equation}
		Repeating the argument for the first estimate in this lemma, we find that the inequalities in (\ref{chi-est-T1}) hold for all $t \in [T_1, T]$. For $T_1 \leq t \leq b \leq T$, by Gr\"onwall's inequality and inequality (d) in Lemma \ref{s1 and s2 approx}, we get: 
		\begin{align*}
		\chi(t) &\leq \chi(T_1) \exp\left( -\frac{(1-\alpha)\log\lambda}{2}\left(\frac p 2\right)^{(2p-4)/p} \int_{T_1}^t s_1(\tau)^{(2p-4)/p}\,d\tau\right) \\
		&\leq \chi(T_1) \exp\left( -\frac{(1-\alpha)\log\lambda}{2}\left(\frac p 2\right)^{(2p-4)/p} s_1(b)^{(2p-4)/p} \right. \\
		&\qquad \qquad \quad \times \left. \int_{T_1}^t \left( 1 + 2^{(p-2)/p} C_0 s_1(T_1)^{(2p-4)/p}s_1(T_1)^{(2p-4)/p}(b-\tau)\right)^{-1}\,d\tau\right) \\
		&= \chi(T_1) \exp\left( \frac{p(1-\alpha)}{2^{(3p-2)/p}(p-2)}\log\left(\frac{1+2^{(p-2)/p}C_0 s_1(T_1)^{(2p-4)/p}(b-t)}{1+2^{(p-2)/p}C_0 s_1(T_1)^{(2p-4)/p}(b-T_1)}\right)\right) \\
		&= \chi(T_1) \left(\frac{1+2^{(p-2)/p}C_0 s_1(T_1)^{(2p-4)/p}(b-t)}{1+2^{(p-2)/p}C_0 s_1(T_1)^{(2p-4)/p}(b-T_1)}\right)^{\beta p/(p-2)}.
		\end{align*}
		The second estimate now follows. 
		
		To prove the final inequality, (\ref{dkappa-approx-2}) and (\ref{chi-est-T1}) show that $\kappa(a) \geq \kappa(T_1)$ and $\chi(T_1) \geq \chi(b)$ for $0 \leq a \leq T_1 \leq b \leq T$. More explicitly, 
		\[
		\frac{\Delta s_2(T_1)}{s_2(T_1)} \leq \frac{\Delta s_2(a)}{s_2(a)} \quad \textrm{and} \quad \frac{\Delta s_2(b)}{s_1(b)} \leq \frac{\Delta s_2(T_1)}{s_2(T_1)}.
		\]
		Recalling that $s_2(T_1) = s_1(T_1)$, combining the above inequalities gives us:
		\[
		\Delta s_2(b) \leq \frac{s_1(b) \Delta s_2(T_1)}{s_2(T_1)} \leq \frac{s_1(b)\Delta s_2(a)}{s_2(a)}.
		\]
		By the assumption that $|\Delta s_1| \leq \alpha\Delta s_2$, we get 
		\[
		\Delta s_2 \leq \norm{\Delta s} \leq \sqrt{1+\alpha^2} \Delta s_2,
		\]
		and combining this inequality with the preceding one gives us the final inequality in the statement of the lemma. 
	\end{proof}
	
	Our final estimate concerns the size of the angles between tangent vectors in the unstable cones near the singularities. This will be used in examining the distance between the unstable subspaces of nearby points in neighborhoods of the singularities. 
	
	Recall the neighborhood $\tilde\Us_1$ of $S$ is given by $\tilde\Us_1 = \union_{k=1}^m \oldphi_k^{-1}\left(D_{\tilde r_1}\right)$. For $x \in \tilde\Us_1$, define:
	\begin{equation}\label{gamma-def}
	\gamma(x) = \max_{\substack{v,w \in K^+(x) \\ \norm{v} = \norm{w} = 1}} \left\{ \frac{\angle\left(Dg_x v, Dg_x w\right)}{\angle(v,w)}\right\} 
	\end{equation}
	and denote $\gamma_j(x) = \gamma(g^j(x))$ for $j \geq 0$. 
	
	\begin{lemma}\label{angle-product}
		For every $x \in \tilde\Us_1$ with $g^j(x)$ in the same component of $\tilde\Us_1$ for $j = 0, \ldots, k$, we have:
		\[
		\prod_{j=0}^{k-1} \gamma_j(x) \leq \left(1+C_0 s_2(0)^{(2p-4)/p}k\right)^{-p/(p-2)},
		\]
		where $C_0$ is the constant from Lemma \ref{s1 and s2 approx}.
	\end{lemma}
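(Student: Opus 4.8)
The plan is to pass to the slowed-down coordinates, read the definition of $\gamma$ off the variational equation (\ref{tangent-change}), and then telescope the resulting angular contraction rate into the quantity $\log\big(s_2(0)/s_2(k)\big)$, which Lemma \ref{s1 and s2 approx} already controls.

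First I would localize in the component of $\tilde\Us_1$ containing $x$, using the coordinates $(s_1,s_2)$ on $D_{\tilde r_1}$ in which $g=G_p$ is the time-$1$ map of (\ref{slowdown vector field}) and $\Psi_p(u)=(p/2)^{(2p-4)/p}u^{(p-2)/p}$ with $u=s_1^2+s_2^2$. Since each stable prong is flow-invariant, the orbit segment stays inside one stable sector, hence in a fixed quadrant of the associated $\C$-disk; by the $(s_j\mapsto -s_j)$ symmetry of the field we may assume $s_1(\tau),s_2(\tau)\ge 0$, and the case $s_2(0)=0$ is trivial (the asserted bound is then $1$, while $\gamma_j\le 1$ always). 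Because $s_1 s_2$ is a first integral and, along each hyperbola $\{s_1 s_2=c\}$, $u$ attains its minimum where $s_1=s_2$, the hypothesis $s(0),s(k)\in D_{\tilde r_1}$ forces $s(\tau)\in D_{\tilde r_1}$ for all $\tau\in[0,k]$, so the formula for $\Psi_p$ is valid throughout the segment. Finally, in the adapted coordinates (\ref{sector-coords}) the metric $\zeta$ is conformal to the Euclidean metric and a tangent vector with coordinates $(\xi_1,\xi_2)$ has the same slope $\xi_2/\xi_1$ as its image in the $\C$-chart; hence for a point $y$ and unit tangent vectors $v,w\in K^+(y)$ one has $\angle(v,w)=|\theta_v-\theta_w|$, where $\theta=\arctan(\xi_2/\xi_1)\in(-\theta_\alpha,\theta_\alpha)$ with $\theta_\alpha=\arctan\alpha$, and under $Dg$ the direction angle $\theta$ evolves by the time-$1$ flow of the equation induced by (\ref{tangent-change}).

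Next I would compute that flow. Writing $\eta=\tan\theta$ and using $\dot\theta=\cos^2\theta\,\dot\eta$, equation (\ref{tangent-change}) becomes $\dot\theta=g(t,\theta)$ with
\[
g(t,\theta)=-\log\lambda\,\Big(2s_1 s_2\,\dot\Psi_p(u)+\big(\Psi_p(u)+u\dot\Psi_p(u)\big)\sin 2\theta\Big),
\]
so that, using $u\dot\Psi_p(u)=\tfrac{p-2}{p}\Psi_p(u)$ inside $D_{\tilde r_1}$,
\[
\frac{\partial g}{\partial\theta}(t,\theta)=-2\log\lambda\,\big(\Psi_p(u)+u\dot\Psi_p(u)\big)\cos 2\theta=-\frac{4(p-1)}{p}\,\log\lambda\,\Psi_p(u)\,\cos 2\theta.
\]
I would fix the cone parameter $\alpha$ small enough that $\tfrac{2p-2}{p}\cos 2\theta_\alpha\ge 1$, i.e. $\alpha^2\le\tfrac{p-2}{3p-2}$ for every $p=p(k)$; this is compatible with the requirement $\alpha>\alpha_0$ of Lemma \ref{strong-cones} because the $\alpha_0$ produced there can be made arbitrarily small (in fact zero) by taking $r_1/r_0$ large enough. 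With such an $\alpha$, $\frac{\partial g}{\partial\theta}(t,\theta)\le -2\log\lambda\,\Psi_p(u)$ for all $\theta\in(-\theta_\alpha,\theta_\alpha)$. Now given $j$ and unit $v,w\in K^+(g^j(x))$, the two solutions of $\dot\theta=g(\tau,\theta)$ starting at $\tau=j$ from the direction angles of $v$ and $w$ remain in $(-\theta_\alpha,\theta_\alpha)$ on $[j,j+1]$ (this is precisely the forward-invariance established in the proof of Lemma \ref{strong-cones}), and so does any $\theta^*(\tau)$ between them; the mean value theorem applied to the difference of the two solutions then gives
\[
\frac{\angle\big(Dg_{g^j(x)}v,\,Dg_{g^j(x)}w\big)}{\angle(v,w)}=\exp\!\left(\int_j^{j+1}\frac{\partial g}{\partial\theta}(\tau,\theta^*(\tau))\,d\tau\right)\le\exp\!\left(-2\log\lambda\int_j^{j+1}\Psi_p(u(\tau))\,d\tau\right),
\]
and taking the supremum over $v,w$ yields the same bound for $\gamma_j(x)$.

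Finally I would telescope and invoke Lemma \ref{s1 and s2 approx}. Multiplying over $j=0,\dots,k-1$ and using that (\ref{slowdown vector field}) gives $\log\lambda\,\Psi_p(u(\tau))=-\tfrac{d}{d\tau}\log s_2(\tau)$,
\[
\prod_{j=0}^{k-1}\gamma_j(x)\le\exp\!\left(-2\log\lambda\int_0^k\Psi_p(u(\tau))\,d\tau\right)=\left(\frac{s_2(k)}{s_2(0)}\right)^{2}.
\]
Inequality (b) of Lemma \ref{s1 and s2 approx} (with $a=0$) gives $s_2(k)\le s_2(0)\big(1+C_0\,s_2(0)^{(2p-4)/p}\,k\big)^{-p/(2p-4)}$, and since $2\cdot\tfrac{p}{2p-4}=\tfrac{p}{p-2}$ this is exactly the asserted bound. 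The one genuinely delicate point is the constant: the exponent $p/(p-2)$ is just the slack of $\tfrac{4(p-1)}{p}$ over $2$ surviving the loss of the factor $\cos 2\theta_\alpha$, so the whole argument rests on choosing $\alpha$ in the (nonempty, for a suitable choice of $r_0,r_1$) range $(\alpha_0,\sqrt{(p-2)/(3p-2)}\,]$; for a general $\alpha\in(\alpha_0,1)$ one still obtains a bound of the same form but with exponent $\tfrac{2(p-1)}{p-2}\cos 2\theta_\alpha$.
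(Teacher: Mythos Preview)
Your proof is correct and reaches the same key intermediate bound $\prod_{j=0}^{k-1}\gamma_j(x)\le (s_2(k)/s_2(0))^2$ as the paper, but by a genuinely different route. The paper works with the \emph{slope} $\eta=\xi_2/\xi_1$, reparametrises the variational equation by $s_1$ rather than $t$, and finds that a difference of two slopes along the same base trajectory satisfies
\[
\frac{d}{ds_1}(\eta_1-\eta_2)=-\frac{2}{s_1}\Bigl(1+\frac{\dot\Psi_p}{\Psi_p}\bigl(u+s_1s_2(\eta_1+\eta_2)\bigr)\Bigr)(\eta_1-\eta_2);
\]
the single fact $|\eta_i|<1$ (valid for \emph{any} $\alpha<1$) gives $\eta_1+\eta_2>-2$, hence $u+s_1s_2(\eta_1+\eta_2)\ge(s_1-s_2)^2\ge0$, and Gr\"onwall yields the per-step factor $(s_1(j)/s_1(j+1))^2=(s_2(j+1)/s_2(j))^2$ with no constraint on the cone width. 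Your approach keeps the time variable and bounds $\partial_\theta g$ directly; this is arguably cleaner because it avoids the slope-to-angle conversion at the end, but the price is the extra hypothesis $\cos 2\theta_\alpha\ge p/(2(p-1))$, i.e.\ $\alpha^2\le(p-2)/(3p-2)$. Your remedy (push $\alpha_0$ down by taking $r_1/r_0$ close to $1$) is consistent with the proof of Lemma~\ref{strong-cones}, but note that $\gamma(x)$ in the statement is defined using the paper's already-fixed cone, so strictly speaking you are proving the lemma for a possibly narrower $K^+$; this is harmless for the application in Lemma~\ref{final-step} (only summability of $\prod\gamma_n$ is needed, and even your general-$\alpha$ exponent $\tfrac{2(p-1)}{p-2}\cos 2\theta_\alpha$ exceeds $1$ for any $\alpha$ not too close to $1$), but it is a genuine difference between the two arguments.
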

	
	\begin{proof}
		Denote $z = \Phi_{kj}(\oldphi_k(x)) = (s_1(0), s_2(0))$, so that $$\left( \Phi_{kj} \circ \oldphi_k \right) \left(g^j(x)\right)  = (s_1(j), s_2(j)).$$ Consider a tangent vector $v = (\zeta_1, \zeta_2)$ in $\C$ along a trajectory of the vector field (\ref{slowdown vector field}). Reparametrizing $\eta = \zeta_2/\zeta_1$ with respect to $s_1$ instead of $t$ along this curve, equation (\ref{tangent-change}) implies 
		\[
		\frac{d\eta}{ds_1} = \frac{d\eta}{dt}\left(\frac{ds_1}{dt}\right)^{-1} = -2\left(\left(1+\eta^2\right)s_2\frac{\dot{\Psi}_p(u)}{\Psi_p(u)} + \left(\frac{1}{s_1} \dot{\Psi}_p(u) + \frac{s_1^2 + s_2^2}{s_1} \frac{\dot{\Psi}_p(u)}{\Psi_p(u)}\right)\eta\right).
		\]
		For $i=1,2$, let $\eta_i(s_1) = \eta_i(s_1, s_1(j), \eta_i^0)$ be a solution to this differential equation with initial condition $\eta_i(s_1(j)) = \eta_i^0$. Then, 
		\[
		\frac{d}{dt}\left(\eta_1 - \eta_2\right) = -2\frac 1{s_1}\left(1+\frac{\dot{\Psi}_p(u)}{\Psi_p(u)} \left(s_1^2 + s_2^2 + s_1s_2(\eta_1 + \eta_2)\right)\right)\left(\eta_1 - \eta_2\right).
		\]
		If $(\xi_1, \xi_2) = D\left(\Phi_{kj}\circ\oldphi_k\right)^{-1}_z(\zeta_1, \zeta_2) \in K^+(x)$, then $|\eta_i| < \alpha < 1$ for $i=1,2$ (see Lemma \ref{strong-cones}), so $\eta_1 + \eta_2 > -2$. Positivity of $\Psi_p$ and $\dot{\Psi}_p$ now yields:
		\[
		\frac{d}{dt}\left(\eta_1 - \eta_2\right) \leq -2\frac 1{s_1} \left(1+\frac{\dot{\Psi}_p(u)}{\Psi_p(u)} \left(s_1 - s_2\right)^2\right)\left(\eta_1 - \eta_2\right),
		\]
		and so by Gr\"onwall's inequality, 
		\begin{align*}
		|\eta_1\left(s_1(j+1)\right) &- \eta_2\left(s_1(j+1)\right)| \\
		&\leq \left|\eta_1^0 - \eta_2^0\right| \exp\left( -2\int_{s_1(j)}^{s_1(j+1)} \frac 1{s_1} \left(1+\frac{\dot{\Psi}_p(u)}{\Psi_p(u)} \left(s_1 - s_2\right)^2\right)ds_1\right) \\
		&\leq \left|\eta_1^0 - \eta_2^0\right| \exp\left( -2\int_{s_1(j)}^{s_1(j+1)} \frac{ds_1}{s_1}\right) \\
		&= \left|\eta_1^0 - \eta_2^0\right| \left(\frac{s_1(j)}{s_1(j+1)}\right)^2 \\
		&= \left|\eta_1^0 - \eta_2^0\right| \left(\frac{s_2(j+1)}{s_2(j)}\right)^2 ,
		\end{align*}
		where the final equality follows from the fact that the trajectories lie on hyperbolas, and so the product $s_1s_2$ is constant. Observe that if $v = (v_1, v_2)$ and $w = (w_1, w_2)$ are two vectors with $\eta_v = v_2/v_1$ and $\eta_w = w_2/w_1$, then 
		\[
		\angle(v,w) = \left|\arctan\eta_v - \arctan\eta_w\right|, 
		\]
		and so by concavity of $\eta \mapsto \arctan\eta$ and conformality of the coordinate map $\Phi_{kj} \circ \oldphi_k$,
		\begin{align*}
		\gamma_j(x) &\leq \max_{\eta_1, \eta_2} \left\{\frac{ \left| \eta_1(s_1(j+1), s_1(j), \eta_1^0) - \eta_2(s_1(j+1), s_1(j), \eta_2^0)\right|}{\left|\eta_1^0 - \eta_2^0\right|} \right\} \leq \left(\frac{s_2(j+1)}{s_2(j)}\right)^2.
		\end{align*}
		It follows that
		\[
		\prod_{j=0}^{k-1} \gamma_j(x) \leq \left(\frac{s_2(k)}{s_2(0)}\right)^2.
		\]
		The desired result now follows from inequality (b) in Lemma \ref{s1 and s2 approx}, since by hypothesis $g^j(x)$ is in the same component of $\tilde U_1$, hence $G_p^j(z) \in D_{\tilde r_1}$ for $0 \leq j \leq k$. 
	\end{proof}
	
	%
	%
	%
	
	\section{Thermodynamics of Young diffeomorphisms}
	
	Given a $C^{1+\alpha}$ diffeomorphism $f$ on a compact Riemannian manifold $M$, we call an embedded $C^1$ disc $\gamma \subset M$ an \emph{unstable disc} (resp. \emph{stable disc}) if for all $x, y \in \gamma$, we have $d(f^{-n}(x), f^{-n}(y)) \to 0$  (resp. $d(f^n(x), f^n(y)) \to 0$) as $n \to +\infty$. A collection of embedded $C^1$ discs $\Gamma = \{\gamma_i\}_{i \in \mathcal I}$ is a \emph{continuous family of unstable discs} if there is a Borel subset $K^s \subset M$ and a homeomorphism $\Phi : K^s \times D^u \to \union_i \gamma_i$, where $D^u \subset \R^d$ is the closed unit disc for some $d < \dim M$, satisfying: 
	\begin{itemize}
		\item The assignment $x \mapsto \Phi|_{\{x\} \times D^u}$ is a continuous map from $K^s$ to the space of $C^1$ embeddings $D^u \hookrightarrow M$, and this assignment can be extended to the closure $\overline{K^s}$; 
		\item For every $x \in K^s$, $\gamma = \Phi(\{x\} \times D^u)$ is an unstable disc in $\Gamma$.
	\end{itemize}
	Thus the index set $\mathcal I$ may be taken to be $K^s \times \{0\} \subset K^s \times D^u$. We define \emph{continuous families of stable discs} analogously. 
	
	A subset $\Lambda \subset M$ has \emph{hyperbolic product structure} if there is a continuous family $\Gamma^u = \{\gamma^u_i\}_{i \in \mathcal I}$ of unstable discs and a continuous family $\Gamma^s = \{\gamma^s_j\}_{j \in \mathcal J}$ of stable discs such that
	\begin{itemize}
		\item $\dim \gamma^u_i + \dim\gamma^s_j = \dim M$ for all $i,j$; 
		\item the unstable discs are transversal to the stable discs, with an angle uniformly bounded away from 0; 
		\item each unstable disc intersects each stable disc in exactly one point; 
		\item $\Lambda = \big( \union_i \gamma^u_i\big) \cap \big(\union_j \gamma^s_j \big)$. 
	\end{itemize}
	
	A subset $\Lambda_0 \subset \Lambda$ with hyperbolic product structure is an \emph{s-subset} if the continuous family of unstable discs defining $\Lambda_0$ is the same as the continuous family of unstable discs for $\Lambda$, and the continuous family of stable discs defining $\Lambda_0$ is a subfamily $\Gamma_0^s$ of the continuous family of stable discs defining $\Gamma_0$. In other words, if $\Lambda_0 \subset \Lambda$ has hyperbolic product structure generated by the families of stable and unstable discs given by $\Gamma_0^s$ and $\Gamma_0^u$, then $\Lambda_0$ is an $s$-subset if $\Gamma_0^s \subseteq \Gamma^s$ and $\Gamma_0^u = \Gamma^u$. A \emph{u-subset} is defined analogously. 
	
	\begin{definition}\label{Young tower def}
		A $C^{1+\alpha}$ diffeomorphism $f : M \to M$, with $M$ a compact Riemannian manifold, is a \emph{Young's diffeomorphism} if the following conditions are satisfied: 
		\begin{enumerate}[label=(Y\arabic*)]
			\item There exists $\Lambda \subset M$ (called the \emph{base}) with hyperbolic product structure, a countable collection of continuous subfamilies $\Gamma_i^s \subset \Gamma^s$ of stable discs, and positive integers $\tau_i$, $i \in \N$, such that the $s$-subsets
			\[
			\Lambda_i^s := \union_{\gamma \in \Gamma^s_i} \big(\gamma \cap \Lambda \big) \subset \Lambda
			\]
			are pairwise disjoint and satisfy:
			\begin{enumerate}[label=(\alph*)]
				\item \emph{invariance}: for $x \in \Lambda_i^s$, 
				\[
				f^{\tau_i}(\gamma^s(x)) \subset \gamma^s(f^{\tau_i}(x)), \quad \textrm{and} \quad f^{\tau_i}(\gamma^u(x)) \supset \gamma^u(f^{\tau_i}(x)),
				\]
				where $\gamma^{u,s}(x)$ denotes the (un)stable disc containing $x$; and, 
				\item \emph{Markov property}: $\Lambda_i^u := f^{\tau_i}(\Lambda_i^s)$ is a $u$-subset of $\Lambda$ such that for $x \in \Lambda_i^s$, 
				\[
				f^{-\tau_i}(\gamma^s(f^{\tau_i}(x)) \cap \Lambda_i^u) = \gamma^s(x) \cap \Lambda, \quad \textrm{and} \quad f^{\tau_i} (\gamma^u(x) \cap \Lambda_i^s) = \gamma^u(f^{\tau_i}(x)) \cap \Lambda. 
				\]
			\end{enumerate}
			\item For $\gamma^u \in \Gamma^u$, we have
			\[
			\mu_{\gamma^u}(\gamma^u \cap \Lambda) > 0, \quad \textrm{and} \quad \mu_{\gamma^u}\Big( \cl\big( \left(\Lambda \setminus \textstyle\union_i \Lambda_i^s\right) \cap \gamma^u\big)\Big) = 0,
			\]
			where $\mu_{\gamma^u}$ is the induced Riemannian leaf volume on $\gamma^u$ and $\cl(A)$ denotes the closure of $A$ in $M$ for $A \subseteq M$. 
			\item There is $a \in (0,1)$ so that for any $i \in \N$, we have:
			\begin{enumerate}[label=(\alph*)]
				\item For $x \in \Lambda_i^s$ and $y \in \gamma^s(x)$, 
				\[
				d(F(x), F(y)) \leq ad(x,y);
				\]
				\item For $x \in \Lambda_i^s$ and $y \in \gamma^u(x) \cap \Lambda_i^s$, 
				\[
				d(x,y) \leq ad(F(x), F(y)),
				\]
			\end{enumerate}
			where $F : \union_i \Lambda_{i}^s \to \Lambda$ is the \emph{induced map} defined by 
			\[
			F|_{\Lambda^s_i} := f^{\tau_i}|_{\Lambda^s_i}.
			\]
			\item Denote $J^u F(x) = \det\big|DF|_{E^u(x)}\big|$. There exist $c > 0$ and $\kappa \in (0,1)$ such that: 
			\begin{enumerate}[label=(\alph*)]
				\item For all $n \geq 0$, $x \in F^{-n}\left(\union_i \Lambda_i^s\right)$ and $y \in \gamma^s(x)$, we have 
				\[
				\left| \log \frac{J^u F(F^n(x))}{J^u F(F^n(y))}\right| \leq c\kappa^n;
				\]
				\item For any $i_0, \ldots, i_n \in \N$ with $F^k(x), F^k(y) \in \Lambda^s_{i_k}$ for $0 \leq k \leq n$ and $y \in \gamma^u(x)$, we have 
				\[
				\left| \log\frac{J^u F(F^{n-k}(x))}{J^u F(F^{n-k}(y))}\right| \leq c\kappa^k.
				\]
			\end{enumerate}
			\item There is some $\gamma^u \in \Gamma^u$ such that 
			\[
			\sum_{i=1}^\infty \tau_i \mu_{\gamma^u} \left(\Lambda_i^s\right) < \infty. 
			\]
		\end{enumerate}
	\end{definition}
	
	
	We say the tower satisfies the \emph{arithmetic condition} if the greatest common divisor of the integers $\{\tau_i\}$ is 1. 
	
	We use the following result to discuss thermodynamics of Young's diffeomorphisms, which was originally presented as Proposition 4.1 and Remark 4 in \cite{PSZ17}. 
	
	\begin{proposition}\label{Young tower nuke}
		Let $f : M \to M$ be a $C^{1+\alpha}$ diffeomorphism of a compact smooth Riemannian manifold $M$ satisfying conditions (Y1)-(Y5), and assume $\tau$ is the first return time to the base of the tower. Then the following hold: 
		\begin{enumerate}[label=(\arabic*)]
			\item There exists an equilibrium measure $\mu_1$ for the potential $\phi_1$, which is the unique SRB measure. 
			\item Assume that for some constants $C>0$ and $0 < h < h_{\mu_1}(f)$, with $h_{\mu_1}(f)$ the metric entropy, we have
			\[
			S_n := \# \left\{ \Lambda_i^s : \tau_i = n \right\} \leq Ce^{hn}
			\]
			Define 
			\begin{equation}\label{lambda_1 def}
			\log\lambda_1 = \sup_{i \geq 1} \sup_{x \in \Lambda_i^s} \frac 1{\tau_i} \log\left| J^u F(x)\right| \leq \max_{x \in M} \log\left|J^u f(x)\right|,
			\end{equation}
			and
			\begin{equation}\label{t_0 def}
			t_0 = \frac{h-h_{\mu_1}(f)}{\log\lambda_1 - h_{\mu_1}(f)}.
			\end{equation}
			Then for every $t \in (t_0, 1)$, there exists a measure $\mu_t \in \mathcal M(f,Y)$, where $Y = \left\{f^k(x) : x \in \union \Lambda_i^s, \: 0 \leq k \leq \tau(x)-1 \right\}$, which is a unique equilibrium measure for the potential $\phi_t$. 
			\item Assume that the tower satisfies the arithmetic condition, and that there is $K > 0$ such that for every $i \geq 0$, every $x,y \in \Lambda_i^s$, and any $j \in \{0, \ldots, \tau_i\}$, 
			\begin{equation}\label{4.2 bound}
			d\left(f^j(x), f^j(y)\right) \leq K\max\{d(x,y), d(F(x), F(y))\}.
			\end{equation}
			Then for every $t_0 < t < 1$, the measure $\mu_t$ has exponential decay of correlations and satisfies the Central Limit Theorem with respect to a class of functions which contains all H\"older continuous functions on $M$. 
		\end{enumerate}
	\end{proposition}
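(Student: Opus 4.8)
The plan is to deduce all three claims from the general thermodynamic formalism for towers of hyperbolic type in \cite{PSZTowers} and \cite{FZTowers}, of which this proposition is the concrete specialization recorded in \cite{PSZ17}. The first step is to pass to the induced system on the tower: collapsing each stable disc of $\Lambda$ to a point, hypotheses (Y1)--(Y4) identify the induced map $F$ (with $F|_{\Lambda_i^s}=f^{\tau_i}$) with a factor of the full shift on the countable alphabet $\{i\}$, which is topologically mixing precisely when the arithmetic condition holds (without it one gets a finite cyclic decomposition, still enough for parts (1)--(2)). The bounded-distortion estimates (Y4a)--(Y4b) are exactly what make the induced geometric potential $\Phi_t:=\sum_{k=0}^{\tau_i-1}\phi_t\circ f^k\big|_{\Lambda_i^s}=-t\log J^uF$ a locally H\"older function on this shift, so Sarig's Ruelle--Perron--Frobenius theory and the operator-renewal method apply; by the Kac/Abramov correspondence, $P_f(\phi_t)=q$ holds iff the Gurevich pressure satisfies $P_G(\Phi_t-q\tau)=0$.

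For part (1), I would take the $F$-invariant measure on the tower equivalent to the Riemannian leaf volume on $\Gamma^u$; it exists by (Y4) and is finite by (Y5) since $\sum_i\tau_i\mu_{\gamma^u}(\Lambda_i^s)<\infty$, and spreading it over the tower produces an $f$-invariant probability $\mu_1$ with absolutely continuous unstable conditionals, i.e.\ an SRB measure. Pesin's entropy formula on the tower gives $h_{\mu_1}(f)=\int\log|Df|_{E^u}|\,d\mu_1=-\int\phi_1\,d\mu_1$, so $P_f(\phi_1)\ge h_{\mu_1}(f)+\int\phi_1\,d\mu_1=0$, while Ruelle's inequality (the unstable Jacobian being $\ge1$) gives $P_f(\phi_1)\le0$; thus $\mu_1$ is an equilibrium state for $\phi_1$, and uniqueness of the SRB measure follows from uniqueness of the conservative absolutely continuous measure on the mixing tower.

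For part (2), I would solve $P_G(\Phi_t-q\tau)=0$ in $q$: the solution $q^\ast=P_f(\phi_t)$ carries an RPF Gibbs state $m_t$ on the countable shift, and $m_t$ spreads to the desired $\mu_t$ on $Y$ once it is positive recurrent with $\int\tau\,dm_t<\infty$. The crux is a pressure-gap inequality: $P_f(\phi_t)$ must strictly beat the ``pressure at infinity'' $\limsup_n\tfrac1n\log\sum_{\tau_i=n}e^{\sup_{\Lambda_i^s}(\Phi_t-q^\ast\tau_i)}$ of the tower. Inserting $S_n\le Ce^{hn}$ together with $\sup_{\tau_i=n}\log J^uF\le n\log\lambda_1$ (so that $\sup_{\tau_i=n}\Phi_t\le\max\{0,-t\}\,n\log\lambda_1$), one finds this pressure at infinity is at most $h+\max\{0,-t\}\log\lambda_1$ — indeed for $0<t<1$ the escaping orbits accumulate on the singularities, where $\phi_t\equiv0$ and entropy vanishes, so it drops to $0$. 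Comparing with the variational lower bound $P_f(\phi_t)\ge h_{\mu_1}(f)+t\int\phi_1\,d\mu_1=(1-t)h_{\mu_1}(f)$ and the definition \eqref{t_0 def} of $t_0$, a short computation (the case $t\le0$ is binding and produces \eqref{t_0 def}) shows the gap is strictly positive for every $t\in(t_0,1)$; the constraint $t<1$ is also what makes the excursion lengths exponentially integrable against $m_t$, giving positive recurrence and $\int\tau\,dm_t<\infty$. Uniqueness of $\mu_t$ among invariant measures charging $Y$ follows from uniqueness of the Gibbs state, together with the fact that for $t<1$ any equilibrium state for $\phi_t$ has finite entropy and $|\phi_t|$-integral and therefore lifts to the tower.

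For part (3), on the topologically mixing shift $m_t$ is the unique RPF equilibrium state, so it has exponential decay of correlations and satisfies the CLT for locally H\"older observables (Sarig; Aaronson--Denker); the exponential return-time tail coming from $S_n\le Ce^{hn}$ and $t<1$ transports these properties to $\mu_t$ on $M$ via Young's tower estimates, and hypothesis \eqref{4.2 bound} guarantees that every H\"older function on $M$ pulls back to a function of summable variation along the tower, so the conclusions hold for the full class of H\"older observables on $M$. I expect part (2) to be the main obstacle: reconciling the finiteness and positive-recurrence thresholds of the induced pressure with the exact constant $t_0$ forces the simultaneous use of Pesin's entropy formula for $\mu_1$ and of the distortion content of (Y4) on which the countable-shift machinery rests.
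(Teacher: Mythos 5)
The paper does not prove this proposition at all: it imports it verbatim as Proposition 4.1 and Remark 4 of \cite{PSZ17} and treats it as a black box, so there is strictly speaking no proof here to compare against. What you have written is instead an attempt to reconstruct the argument underlying that citation, drawing on \cite{PSZTowers}, \cite{FZTowers}, and the Sarig countable-Markov-shift machinery. The overall architecture you describe --- lift to the tower, induced geometric potential $\Phi_t=-t\log J^uF$ locally H\"older by (Y4), Gurevich pressure and RPF Gibbs states, a pressure gap forcing positive recurrence, projection back via Abramov/Kac, and exponential return-time tails giving decay of correlations and the CLT --- is indeed the route those references take, so the sketch is faithful in spirit to the black-boxed result.

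There is, however, a genuine gap in the pressure-gap step of part (2) for $0<t<1$. You bound the tail pressure by $h+\max\{0,-t\}\log\lambda_1-q^{*}$ and compare with the variational lower bound $P_f(\phi_t)\ge(1-t)h_{\mu_1}(f)$. For $t\le0$ this algebra does reproduce \eqref{t_0 def}. But for $0<t<1$ the requirement becomes $(1-t)h_{\mu_1}(f)>h$, which fails as soon as $t$ is close to $1$ (the left side tends to $0$ while $h>0$ is fixed). Your remark that ``escaping orbits accumulate on the singularities, where $\phi_t\equiv0$ and entropy vanishes, so it drops to $0$'' names the right intuition but does not constitute the estimate: to force the tail pressure strictly below $q^{*}=P_f(\phi_t)$ for $t$ near $1$ one needs a quantitative lower bound on $t\log J^uF$ along long excursions, i.e.\ a comparison of $\log J^uF(x)$ with $\tau(x)$ on the slow part of the tower, and (Y3b) alone only yields a uniform, not $\tau$-proportional, lower bound on $\log J^uF$. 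This is precisely the delicate part of the analysis in \cite{PSZTowers}; without it, positive recurrence of the Gibbs state $m_t$ (and hence existence and uniqueness of $\mu_t$) is not established on all of $(t_0,1)$, which is the substance of part (2).
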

	
	\section{Young towers over pseudo-Anosov diffeomorphisms}
	
	Our argument that smooth pseudo-Anosov diffeomorphisms are Young's diffeomorphisms requires the construction of a hyperbolic tower on pseudo-Anosov homeomoprhisms first. We begin this section by constructing this hyperbolic tower, taking an element of the Markov partition of the pseudo-Anosov homeomorphism as the base of the tower. 
	
	We assume that our pseudo-Anosov homeomorphism $f$ admits only one singularity; the analysis follows similarly with more singularities, but the notation becomes unwieldy due to the different numbers of prongs at each singularity. Therefore we state without proof that the arguments of this section imply that pseudo-Anosov diffeomorphisms admitting multiple singularities are also Young diffeomorphisms. An example of a pseudo-Anosov homeomorphism of the genus-2 torus admitting only one singularity may be found in \cite{Penn88}.
	
	By Proposition \ref{pseudo-Anosov Markov}, a pseudo-Anosov surface homeomorphism $f : M \to M$ admits a Markov partition of arbitrarily small diameter. Let $\tilde\Ps$ be such a Markov partition, and let $\tilde P \in \tilde \Ps$ be an element of the Markov partition contained in a chart $U_1$ not intersecting with the chart $U_0$ of the singularity $x_0$. For $x \in \tilde P$, let $\tilde\gamma^s(x)$ and $\tilde\gamma^u(x)$ respectively be the connected component of the intersection of the stable and unstable leaves with $\tilde P$ containing $x$.
	
	Let $\tilde\tau(x)$ be the first return time of $x$ to $\mathrm{Int}\tilde P$ for $x \in \tilde P$. For $x$ with $\tilde \tau(x) < \infty$, define:
	\[
	\tilde\Lambda^s(x) = \union_{y \in \tilde U^u(x) \setminus \tilde A^u(x)} \tilde\gamma^s(y),
	\]
	where $\tilde U^u(x) \subseteq \tilde\gamma^u(x)$ is an interval containing $x$, open in the induced topology of $\tilde\gamma(x)$, and $\tilde A^u(x) \subset \tilde U^u(x)$ is the set of points that either lie on the boundary of the Markov partition, or never return to $\tilde P$. One can show the leaf volume of $\tilde A^u(x)$ is 0, so that for each $y \in \tilde\Lambda^s(x)$, the leaf volume of $\tilde\gamma(y) \cap \tilde\Lambda^s(x)$ is positive. We further choose our interval $U^u(x)$ so that
	\begin{itemize}
		\item for $y \in \tilde \Lambda^s(x)$, we have $\tilde\tau(y) = \tilde\tau(x)$; and, 
		\item for $y \in \tilde P$ with $\tilde\tau(x) = \tilde\tau(y)$, we have $y \in \tilde\Lambda(z)$ for some $z \in \tilde P$. 
	\end{itemize}
	One can show the image under $\tilde f^{\tilde\tau(x)}$ of $\tilde\Lambda^s(x)$ is a $u$-subset containing $\tilde f^{\tilde \tau(x)}(x)$, and that for $x, y \in \tilde P$ with finite return time, either $\tilde\Lambda^s(x)$ and $\tilde\Lambda^s(y)$ are disjoint or coinciding. As discussed in \cite{PSZ17}, this gives us a countable collection of disjoint sets $\tilde\Lambda^s_i$ and numbers $\tilde\tau_i$ for which the pseudo-Anosov homeomorphism $f : M \to M$ is a Young map, with $s$-sets $\tilde\Lambda_i^s$, inducing times $\tilde\tau_i$, and tower base
	\[
	\tilde\Lambda := \union_{i =1}^\infty \cl\big(\tilde\Lambda_i^s\big).
	\]
	
	In the following theorem, Conditions (Y1$'$) through (Y5$'$) are virtually identical to Conditions (Y1) through (Y5) in Definition \ref{Young tower def}. They are reprinted in the following theorem because pseudo-Anosov homeomorphisms are not true diffeomorphisms, and thus by definition cannot satisfy Conditions (Y1) through (Y5). However, analogous conditions may be established for pseudo-Anosov homeomorphisms, and these conditions will be used to show that globally smooth realizations of pseudo-Anosov diffeomorphisms (which are true diffeomorphisms) are Young's diffeomorphisms. 
	
	\begin{theorem}\label{PAH-hyperbolic-tower}
		The set $\tilde \Lambda$ defined above for the pseudo-Anosov homeomorphism $f : M \to M$ satisfies the following conditions: 
		\begin{enumerate}[label=\emph{(Y\arabic*$'$)}]
			\item $\tilde \Lambda$ has hyperbolic product structure, and the sets $\left\{ \tilde \Lambda_i^s\right\}_{i \in \N}$ are pairwise disjoint $s$-subsets and satisfy:
			\begin{enumerate}[label=(\alph*)]
				\item \textbf{invariance}: for $x \in \tilde\Lambda_i^s$, 
				\[
				f^{\tau_i}(\gamma^s(x)) \subset \gamma^s(f^{\tau_i}(x)), \quad \textrm{and} \quad f^{\tau_i}(\gamma^u(x)) \supset \gamma^u(f^{\tau_i}(x)),
				\]
				where $\gamma^{u,s}(x)$ denotes the (un)stable disc containing $x$; and, 
				\item \textbf{Markov property}: $\tilde \Lambda_i^u := f^{\tau_i}(\Lambda_i^s)$ is a $u$-subset of $\tilde \Lambda$ such that for $x \in \tilde \Lambda_i^s$, 
				\[
				f^{-\tau_i}(\gamma^s(f^{\tau_i}(x)) \cap \tilde\Lambda_i^u) = \gamma^s(x) \cap \tilde\Lambda, \quad \textrm{and} \quad f^{\tau_i} (\gamma^u(x) \cap \tilde\Lambda_i^s) = \gamma^u(f^{\tau_i}(x)) \cap \tilde\Lambda. 
				\]
			\end{enumerate}
			\item For $\gamma^u \in \Gamma^u$, we have
			\[
			\nu^s\left(\gamma^u \cap \tilde \Lambda\right) > 0, \quad \textrm{and} \quad \nu^s\Big( \cl\big( \left(\tilde \Lambda \setminus \textstyle\union_i \tilde \Lambda_i^s\right) \cap \gamma^u\big)\Big) = 0,
			\]
			where $\nu^s$ is the transversal invariant measure with respect to the stable foliation $\Fs^s$ for $f$. 
			\item There is $a \in (0,1)$ so that for any $i \in \N$, we have:
			\begin{enumerate}[label=(\alph*)]
				\item For $x \in \tilde\Lambda_i^s$ and $y \in \gamma^s(x)$, 
				\[
				d^s(F(x), F(y)) \leq ad^s(x,y);
				\]
				\item For $x \in \tilde\Lambda_i^s$ and $y \in \gamma^u(x) \cap \tilde \Lambda_i^s$, 
				\[
				d^u(x,y) \leq ad^u(F(x), F(y)),
				\]
			\end{enumerate}
			where $F : \union_i \tilde\Lambda_{i}^s \to \tilde\Lambda$ is the \emph{induced map} defined by 
			\[
			F|_{\tilde\Lambda^s_i} := f^{\tau_i}|_{\tilde\Lambda^s_i}
			\]
			and $d^s$ and $d^u$ are the distances in the stable and unstable leaves of the foliations $\Fs^s$ and $\Fs^u$ in $\tilde P$, given respectively by $\nu^u$ and $\nu^s$.
			\item Denote $J^u F(x) = \det\big|DF|_{E^u(x)}\big|$. There exist $c > 0$ and $\kappa \in (0,1)$ such that: 
			\begin{enumerate}[label=(\alph*)]
				\item For all $n \geq 0$, $x \in F^{-n}\left(\union_i \tilde \Lambda_i^s\right)$ and $y \in \gamma^s(x)$, we have 
				\[
				\left| \log \frac{J^u F(F^n(x))}{J^u F(F^n(y))}\right| \leq c\kappa^n;
				\]
				\item For any $i_0, \ldots, i_n \in \N$ with $F^k(x), F^k(y) \in \tilde \Lambda^s_{i_k}$ for $0 \leq k \leq n$ and $y \in \gamma^u(x)$, we have 
				\[
				\left| \log\frac{J^u F(F^{n-k}(x))}{J^u F(F^{n-k}(y))}\right| \leq c\kappa^k.
				\]
			\end{enumerate}
			\item There is some $\gamma^u \in \tilde \Gamma^u$ such that 
			\[
			\sum_{i=1}^\infty \tau_i \nu^s \big(\tilde \Lambda_i^s \cap \gamma^u\big) < \infty. 
			\]
		\end{enumerate}
	\end{theorem}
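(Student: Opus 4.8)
The plan is to verify conditions (Y1$'$)–(Y5$'$) one at a time, following the template of \cite{PSZ17} but exploiting that, unlike the Katok map, the pseudo-Anosov homeomorphism $f$ is genuinely linear in leaf coordinates everywhere it is differentiable: by Proposition~\ref{PAH-differential} and Remark~\ref{PAH uniformly hyperbolic}, away from the singular chart $U_0$ the map $f$ acts on the leaf metrics $d^u,d^s$ induced by $\nu^s,\nu^u$ with constant expansion and contraction factors $\lambda$ and $\lambda^{-1}$. For (Y1$'$), the hyperbolic product structure of $\tilde\Lambda$ is inherited from that of the Markov rectangle $\tilde P$: since $\tilde P\subset U_1$ is disjoint from $U_0$, the leaves of $\Fs^s$ and $\Fs^u$ restrict to smooth transverse foliations of $\tilde P$, giving $\tilde P$ the product structure of a stable segment by an unstable segment; the families $\Gamma^s,\Gamma^u$ for $\tilde\Lambda$ are the corresponding leaf components (together with the limiting ones accounting for the closures), and each $\tilde\Lambda_i^s$ is by construction the $s$-subset lying over the unstable subinterval $\tilde U^u_i\setminus\tilde A^u_i$ on which the first-return time equals the constant $\tau_i$. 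The invariance and Markov relations (a)–(b) are then read off from the defining property of the Markov partition $\tilde\Ps$ — that $f$ maps the stable boundary of a partition element into the stable boundary and the unstable boundary onto a union of unstable boundaries — propagated along the orbit segment of length $\tau_i$, in the manner of \cite{FLP79}, Expos\'e~10, or Section~6.4 of \cite{BaPeNUH}.

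Conditions (Y2$'$) and (Y5$'$) are the two places where the invariant measure enters. The set $A\subset M$ of points whose forward orbit never meets $\mathrm{Int}\,\tilde P$ is closed, since its complement $\bigcup_{n\ge1}f^{-n}(\mathrm{Int}\,\tilde P)$ is open, and $\nu(A)=0$ because $\nu$ from Proposition~\ref{PAH measure} is an $f$-invariant probability measure that is ergodic (indeed Bernoulli, Proposition~\ref{pseudo-Anosov Markov}); the Markov boundary is a finite union of leaf segments, hence also closed and $\nu$-null. Using that $\nu$ is locally $\nu^s\times\nu^u$ together with Fubini, for $\nu^u$-a.e. unstable disc the slice of $A\cup\partial\tilde\Ps$ is closed and $\nu^s$-null, and holonomy-invariance of $\nu^s$ upgrades this to every $\gamma^u\in\Gamma^u$; this yields both $\nu^s(\gamma^u\cap\tilde\Lambda)>0$ and, since $(\tilde\Lambda\setminus\bigcup_i\tilde\Lambda_i^s)\cap\gamma^u$ is contained in $(A\cup\partial\tilde\Ps)\cap\gamma^u$ together with the countably many endpoints of the intervals $\tilde U^u_i$, the nullity of its closure. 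For (Y5$'$), Kac's formula applied to the ergodic probability measure $\nu$ gives $\sum_i\tau_i\,\nu(\tilde\Lambda_i^s)=\int_{\tilde P}\tilde\tau\,d\nu=1<\infty$; since $\nu=\nu^s\times\nu^u$ on $\tilde P$ and $\tilde\tau\equiv\tau_i$ on $\tilde\Lambda_i^s$, the mass $\nu(\tilde\Lambda_i^s)$ factors as $\nu^s(\tilde\Lambda_i^s\cap\gamma^u)$ times the (finite, positive) $\nu^u$-length of the stable direction of $\tilde P$, so $\sum_i\tau_i\,\nu^s(\tilde\Lambda_i^s\cap\gamma^u)<\infty$ for every $\gamma^u$.

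Conditions (Y3$'$) and (Y4$'$) then follow from the linear model with essentially no computation. By Remark~\ref{PAH uniformly hyperbolic}, $f$ contracts $d^s$ on stable leaves and expands $d^u$ on unstable leaves by the factor $\lambda^{-1}$ at each iterate; since every return time $\tau_i\ge1$, the induced map $F=f^{\tau_i}$ satisfies (Y3$'$) with $a=\lambda^{-1}$. For (Y4$'$), the unstable Jacobian of the homeomorphism, in the leaf metric coming from $\nu^s$, is the constant $J^uf\equiv\lambda$ on $M\setminus S$, so $J^uF\equiv\lambda^{\tau_i}$ is constant on each $\tilde\Lambda_i^s$; because each $s$-subset contains whole stable discs and the invariance in (Y1$'$) keeps the relevant forward iterates of $\gamma^s$- and $\gamma^u$-related points in a common $s$-subset, every logarithm occurring in (a)–(b) vanishes identically, and the bounds hold with, say, $c=1$ and any $\kappa\in(0,1)$.

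The only genuinely delicate step is (Y1$'$): identifying the hyperbolic product structure of $\tilde\Lambda$ precisely and deducing the invariance and Markov relations from the first-return construction, while tracking how the exceptional set $\tilde A^u$ and the closures interact with $f$; the remaining conditions reduce to the linear hyperbolicity of $f$ plus the recurrence and Kac arguments. One should also record that on the compact rectangle $\tilde P$, which avoids the singularity, the leaf metrics $d^u,d^s$ defined via $\nu^s,\nu^u$ are bi-Lipschitz equivalent to the ambient Riemannian distance, so that (Y3$'$) in the stated form is what is actually needed when these conditions are later transported to the smooth model $g$.
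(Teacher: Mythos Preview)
Your proposal is correct and follows essentially the same approach as the paper's proof, which is a terse one-paragraph sketch invoking Proposition~\ref{PAH-differential} for (Y1$'$), (Y3$'$), (Y4$'$), the null-measure argument for (Y2$'$), and Kac's theorem for (Y5$'$). Your version fills in the details the paper omits---in particular your observation that $J^uF\equiv\lambda^{\tau_i}$ is constant on each $\tilde\Lambda_i^s$, so the log-ratios in (Y4$'$) vanish identically, is more explicit than the paper's bare citation of Proposition~\ref{PAH-differential}---but the underlying arguments are the same.
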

	
	\begin{proof}
		Properties (Y1$'$), (Y3$'$), and (Y4$'$) all follow from Proposition \ref{PAH-differential}. Property (Y2$'$) follows because $x \in  \cl\big( \left(\Lambda \setminus \textstyle\union_i \Lambda_i^s\right) \cap \gamma^u\big)$ implies either that $x \in \partial P$ or $\tau(x) = \infty$, both of which happen on a set of Lebesgue measure 0 (and the smooth measure for pseudo-Anosov homeomorphisms has density with respect to Lebesgue measure). And since $\tau$ is a first return time, (Y5$'$) follows from Kac's theorem. 
	\end{proof}
	
	The next lemma gives a bound on the number $S_n$ of distinct $s$-subsets $\tilde \Lambda^s_i$ with a given inducing time$ \tilde\tau_i = n$. Since the pseudo-Anosov homeomorphism $f$ is topologically conjugate to the smooth realization $g$, this will eventually give us an analogous bound on the number of distinct $s$-subsets for the base of the tower for $g$. (See Condition (2) of Proposition \ref{Young tower nuke}.)
	
	\begin{lemma}\label{number-of-inducing-sets}
		There exists $h < \htop(f)$ such that $S_n \leq e^{hn}$, where $S_n$ is the number of $s$-sets $\tilde\Lambda^s_i$ with inducing time $\tilde\tau_i = n$. 
	\end{lemma}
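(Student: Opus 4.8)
The plan is to count the $s$-subsets $\tilde\Lambda_i^s$ with inducing time exactly $n$ by tracking the itinerary of the Markov partition $\tilde\Ps$ that each such set follows before its first return to $\mathrm{Int}\,\tilde P$. Recall that the inducing time $\tilde\tau_i$ is the first return time of the base rectangle $\tilde P$ to its own interior. Each $s$-subset $\tilde\Lambda_i^s$ corresponds to a maximal ``sub-rectangle'' of $\tilde P$ whose points share a common itinerary $(\tilde P_{j_0}, \tilde P_{j_1}, \ldots, \tilde P_{j_{n-1}})$ of length $n$ in $\tilde\Ps$, with $\tilde P_{j_0} = \tilde P_{j_{n-1}} = \tilde P$ and $f^k(\tilde\Lambda_i^s) \cap \mathrm{Int}\,\tilde P = \emptyset$ for $0 < k < n$; distinct $s$-subsets with the same inducing time $n$ correspond to distinct such admissible words. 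Therefore $S_n$ is bounded above by the number of admissible cylinders of length $n$ in the subshift of finite type associated to the Markov partition that begin and end at the symbol corresponding to $\tilde P$ and avoid that symbol in between — in particular $S_n \le N_n$, where $N_n$ is the total number of admissible words of length $n$ for the full subshift $(\Sigma, \sigma)$ coded by $\tilde\Ps$.

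Next I would invoke the fact that the topological entropy of the subshift of finite type equals the topological entropy $\htop(f)$ of the pseudo-Anosov homeomorphism: since $\tilde\Ps$ is a Markov partition, $f$ is a topological factor of $(\Sigma,\sigma)$ with a boundedly-finite-to-one coding, so $\htop(\sigma) = \htop(f)$. (Alternatively, one cites that for pseudo-Anosov maps $\htop(f) = \log\lambda$, with $\lambda$ the dilatation, and that the Markov coding realizes this entropy.) By the standard characterization of entropy for subshifts, $\htop(\sigma) = \lim_{n\to\infty}\frac1n\log N_n = \inf_n \frac1n\log N_n$, the last equality by submultiplicativity $N_{m+n}\le N_m N_n$. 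Hence for any $h > \htop(f)$ there is a constant $C>0$ with $N_n \le C e^{hn}$ for all $n$; absorbing $C$ into the exponential (enlarging $h$ slightly and discarding finitely many $n$, or noting $S_1 = 0$ and the bound is only needed for large $n$), we obtain $S_n \le e^{hn}$. The key point is that we may take $h$ strictly between $\htop(f)$ and any prescribed larger number, so in particular $h < \htop(f)$ is \emph{not} what we want — rather, reading the statement, we want $h$ with $S_n \le e^{hn}$ and simultaneously $h < \htop(f)$, which is the subtle part: we need the exponential growth rate of $S_n$ itself to be \emph{strictly below} $\htop(f)$.

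To get the strict inequality $\limsup_n \frac1n\log S_n < \htop(f)$, the idea is that the words counted by $S_n$ are \emph{first-return} words: they hit the symbol $\tilde P$ only at the two endpoints. The induced (first-return) subshift on the symbol $\tilde P$ is a countable-state Markov shift, and the number $S_n$ of first-return words of length $n$ has exponential growth rate strictly smaller than $\htop(\sigma)$ whenever $\tilde P$ is a proper subset of the state space with $f$ topologically transitive — this is because the return-time generating function $\sum_n S_n z^n$ has radius of convergence $e^{-\htop(\sigma)}$ but the loop through all of $\tilde\Ps$ forces $\sum_n S_n e^{-n\htop(\sigma)}$ to be finite (indeed equal to $1$, the Kac/first-return normalization at the critical parameter), so $S_n e^{-n\htop(\sigma)} \to 0$ and in fact decays exponentially by the spectral gap of the transfer operator restricted to the complement of $\tilde P$. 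Concretely, one writes the transfer matrix $A$ of the SFT in block form according to whether a state is $\tilde P$ or not, and observes $S_n = (\text{entries of } B^{n-2})$-type expressions where $B$ is the submatrix on states $\ne \tilde P$; since $B$ is a strict principal submatrix of the primitive matrix $A$, Perron–Frobenius gives spectral radius of $B$ strictly less than that of $A$, i.e. $\limsup_n\frac1n\log S_n = \log\rho(B) < \log\rho(A) = \htop(f)$. Choosing $h$ with $\log\rho(B) < h < \htop(f)$ and enlarging slightly to absorb the polynomial/constant factors yields $S_n \le e^{hn}$ for all $n$ (using again $S_1=0$ and adjusting small $n$).

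The main obstacle is the strict inequality in the last paragraph: it is easy to get $S_n$ growing \emph{no faster} than $e^{\htop(f)\,n}$, but the lemma needs the growth rate bounded \emph{away} from $\htop(f)$, which is exactly what is needed downstream to ensure $t_0 < 0$ in Proposition \ref{Young tower nuke} (via formula \eqref{t_0 def}, since $h < h_{\mu_1}(f) \le \htop(f)$ is forced). This requires identifying $S_n$ with first-return words and applying the strict Perron–Frobenius domination of a principal submatrix, together with transitivity (equivalently, primitivity of a power of the transfer matrix, which holds because the pseudo-Anosov map is topologically mixing and the Markov partition can be chosen so the associated SFT is primitive). A secondary technical point is justifying that distinct $s$-subsets $\tilde\Lambda_i^s$ with the same inducing time really do have distinct Markov itineraries — this follows from the construction: the $\tilde\Lambda_i^s$ partition (up to measure zero) the points of $\tilde P$ with the given return time, and two points with the same return time and same full $f$-itinerary through $\tilde\Ps$ lie in the same sub-rectangle, hence in the same $\tilde\Lambda_i^s$ by the defining properties of $\tilde U^u(x)$.
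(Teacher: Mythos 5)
Your proof is correct and matches the approach the paper defers to in \cite{PSZ17}, Lemma 6.1 (the paper's proof is a one-line reference noting that pseudo-Anosov homeomorphisms also admit finite Markov partitions): bound $S_n$ by the number of first-return words of length $n$ in the subshift of finite type coded by the Markov partition, and obtain the strict gap $h < \htop(f)$ from the Perron--Frobenius fact that deleting the row and column of the symbol $\tilde P$ from the irreducible transition matrix strictly lowers its spectral radius. You also correctly identify the key subtlety — that the crude bound $S_n \le N_n$ only gives a rate arbitrarily close to but above $\htop(f)$, so the strict undercut must come from the first-return structure — which is exactly what is needed downstream in \eqref{t_0 def} to make $t_0 < 0$.
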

	\begin{proof} The proof is analogous to \cite{PSZ17}, Lemma 6.1, since pseudo-Anosov homeomorphisms admit finite Markov partitions. 
	\end{proof}
	
	Let $H :M \to M$ be the conjugacy map so that $g \circ H = H \circ f$, and let $\Ps = H(\tilde \Ps)$, $P = H(\tilde P)$. Then $\Ps$ is a Markov partition for the pseudo-Anosov diffeomorphism $(M, g)$, and $P$ is a partition element. By continuity of $H$, we may assume the elements of $\Ps$ have arbitrarily small diameter. Further let $\Lambda = H(\tilde \Lambda)$. Then $\Lambda$ has direct hyperbolic product structure with full length stable and unstable curves $\gamma^s(x) = H(\tilde\gamma^s(x))$ and $\gamma^u(x) = H(\tilde\gamma^u(x))$. Then $\Lambda^s_i = H(\tilde\Lambda_i^s)$ are $s$-sets and $\Lambda^u_i = H(\tilde\Lambda^u_i) = g^{\tau_i}(\Lambda_i^s)$, where $\tau_i = \tilde\tau_i$ for each $i$, and $\tau(x) = \tau_i$ whenever $x \in \Lambda_i^s$. 
	
	Recall $\Us_0 = \union_{k=1}^m \oldphi_k^{-1}\left(D_{r_0}\right)$. If there is only one singularity, $\Us_0 = \oldphi_0^{-1} \left(D_{r_0}\right)$. Given $Q>0$, we can take $r_0$ in the construction of $g$ to be so small and refine the partition $\tilde\Ps$ so that the partition element $\tilde P$ (and hence $P$) may be chosen so that
	\begin{equation}\label{Q-definition}
	g^n(x) \not\in\Us_0 \textrm{ for any } 0 \leq n \leq Q
	\end{equation}
	and any $x$ so that either $x \in P$, or $x \not\in \Us_0$ while $g^{-1}(x) \in \Us_0$. 
	
	We now prove the set $\Lambda = H(\tilde\Lambda)$ constructed above is the base of a Young tower on $M$ for the diffeomorphism $g$. Properties (Y1), (Y2), and (Y5) are straightforward to verify. Our strategy in proving these conditions, along with (Y3), is similar to that used in \cite{PSZ17}, but we restate it here for the reader's convenience. The main difference between the argument used for these pseudo-Anosov diffeomorphisms and the Katok map comes in proving (Y4), where we use a local trivialization of our surface $M$ as opposed to the universal cover of $\T^2$ by $\R^2$. 
	
	\begin{theorem}\label{PAD tower}
		The collection of $s$-subsets $\Lambda_i^s = H(\tilde\Lambda_i^s)$ satisfies conditions (Y1) - (Y5), making the smooth pseudo-Anosov diffeomorphism $g : M \to M$ a Young's diffeomorphism. 
	\end{theorem}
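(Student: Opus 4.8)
The plan is to transfer the combinatorial and measure-theoretic content of Theorem~\ref{PAH-hyperbolic-tower} through the conjugacy $H$ essentially for free, and to concentrate the real work in condition~(Y4), where the degeneration of hyperbolicity at the singularity forces the use of the estimates of Section~5.

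For conditions (Y1), (Y2), and (Y5) I would argue as follows. Since $H$ conjugates $f$ to $g$ and sends the leaves of $\Fs^{s}$ and $\Fs^{u}$ to the (continuous, leafwise-smooth) stable and unstable foliations of $g$, and since $P = H(\tilde P)$ sits in a chart disjoint from the singular chart, the set $\Lambda = H(\tilde\Lambda)$ carries a hyperbolic product structure with full-length $C^{1}$ stable and unstable discs $\gamma^{s,u}(x) = H(\tilde\gamma^{s,u}(x))$; the pairwise disjointness of the $\Lambda_i^s$, the invariance relations, and the Markov property in (Y1) are all invariant under conjugacy, so they follow from (Y1$'$). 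For (Y2) I would work directly with $g$: by Proposition~\ref{PAD measure}, $g$ preserves the smooth measure $\mu_1$, which has positive density off $S$ and absolutely continuous conditional measures on unstable leaves, so by Poincaré recurrence $\mu_1$-a.e.\ point of a generic unstable leaf $\gamma^u$ returns to $\mathrm{Int}\,P$, while $\partial P$ meets $\gamma^u$ in finitely many points; hence $\mu_{\gamma^u}(\gamma^u\cap\Lambda) = \mu_{\gamma^u}(\gamma^u) > 0$ and $\mu_{\gamma^u}\bigl(\cl((\Lambda\setminus\union_i\Lambda_i^s)\cap\gamma^u)\bigr) = 0$. Condition (Y5) is then Kac's theorem: for $\mu_1$-a.e.\ $\gamma^u$ one has $\sum_i\tau_i\mu_{\gamma^u}(\Lambda_i^s\cap\gamma^u) = \int_{\gamma^u}\tau\,d\mu_{\gamma^u}<\infty$ because $\int_\Lambda\tau\,d\mu_1\le\mu_1(M)<\infty$ and $\tau_i=\tilde\tau_i$; one picks a $\gamma^u$ generic for both (Y2) and (Y5).

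For condition (Y3) I would decompose a full return-time orbit segment $x,g(x),\dots,g^{\tau_i}(x)$ of a point $x\in\Lambda_i^s$ into its maximal excursions into $\Us_0$ and the complementary stretches in $M\setminus\Us_0$. On $M\setminus\Us_0$ one has $g=f$, which by Proposition~\ref{PAH-differential} contracts stable leaves by $\lambda^{-1}$ and expands unstable leaves by $\lambda$ in the metric $\zeta$ — which off the slow-down neighborhood is, by construction, exactly the metric for which $\nu^s$ and $\nu^u$ are orthonormal; the buffer condition~(\ref{Q-definition}) guarantees at least $Q$ consecutive steps in $M\setminus\Us_0$ inside each return block, hence a factor $\lambda^{-Q}$ of stable contraction (and $\lambda^{Q}$ of unstable expansion). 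On an excursion, stable distances never grow, and the deviation bound~(\ref{deviation bound}) of Lemma~\ref{spread-in-slowdown lemma} caps the distortion of a stable segment over the whole excursion by a uniform constant (symmetrically for unstable segments via $g^{-1}$). Since both endpoints $x,F(x)$ lie in $P\subset M\setminus\Us_0$, where $\zeta$ is uniformly comparable to the leaf distances $d^{s},d^{u}$, one concludes $d(F(x),F(y))\le a\,d(x,y)$ on $\gamma^s(x)$ with $a:=C\lambda^{-Q}<1$ once $r_0$ (hence $Q$) is large; the unstable estimate follows by the same argument applied to $g^{-1}$. The same excursion bookkeeping also yields the auxiliary inequality~(\ref{4.2 bound}).

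The hard part is condition (Y4), and the enabling observation is that $J^u g\equiv\lambda$ on $M\setminus\Us_0$ (since there $g=f$, Proposition~\ref{PAH-differential} gives $Dg|_{E^u}=\lambda$ along the foliation direction by Proposition~\ref{regular-cones}, and $\zeta$ is the $\nu^s$--$\nu^u$ product metric there), so for $x,y$ on a common stable or unstable disc the telescoped difference $\log\bigl(J^uF(x)/J^uF(y)\bigr)=\sum_j\bigl(\log J^u g(g^j x)-\log J^u g(g^j y)\bigr)$ is supported on the indices $j$ with $g^j(x)\in\Us_0$, i.e.\ on the excursions. On each excursion I would pass to the coordinates $w=\Phi_{kj}\circ\oldphi_k$ in which $g$ is the time-one map $G_p$ of~(\ref{slowdown vector field}), write each term as the flow-integral of the difference of $\tfrac{d}{dt}\log\|\cdot\|$ along the two transported unstable vectors, and, using the variational equations from the proof of Lemma~\ref{strong-cones}, split it into a \emph{position} part, controlled by the deviation bound~(\ref{deviation bound}), and an \emph{angular} part, controlled by the damping of angles between nearby unstable directions in Lemma~\ref{angle-product}. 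The crucial feature — and the reason the naive estimate by the Hölder norm of $\log J^u g$ (whose Hölder constant blows up at $S$) does not work — is that Lemmas~\ref{spread-in-slowdown lemma} and~\ref{angle-product} give bounds whose constants do not depend on how deep into $\Us_0$ the excursion reaches. For (Y4)(a), $d(F^n x,F^n y)\le a^n d(x,y)$ by (Y3) for $y\in\gamma^s(x)$, so the position and angular discrepancies feeding the excursion estimate at the $n$-th return are $O(a^n)$, and summing the per-excursion bounds yields $\bigl|\log(J^uF(F^nx)/J^uF(F^ny))\bigr|\le c\kappa^n$; (Y4)(b) is the symmetric statement along unstable discs, where agreement of the future itinerary over $k$ steps forces the two orbits to shadow through the same excursions and backward contraction along $\gamma^u$ provides the factor $\kappa^k$. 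Finally, where \cite{PSZ17} uses the lift of $\T^2$ to $\R^2$ to compare tangent vectors along orbits, here one notes that every orbit segment between consecutive returns to $P$ either lies in the compact set $M\setminus\Us_0$, where uniform bounds are immediate, or makes an excursion confined to a single singular chart $U_k$, so that all tangent-vector estimates can be carried out in the local trivialization $TU_k\cong U_k\times\R^2$; this is the needed substitute for $\R^2$. Combining (Y1)--(Y5) shows that $g$ is a Young's diffeomorphism.
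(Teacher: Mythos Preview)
Your proposal is correct and follows essentially the same route as the paper: (Y1), (Y2), (Y5) via the conjugacy and Kac, (Y3) via the itinerary/excursion decomposition together with the deviation bound~(\ref{deviation bound}), and (Y4) by splitting the Jacobian comparison into a position part (vanishing outside $\Us_0$ and controlled inside by Lemma~\ref{spread-in-slowdown lemma}) and an angular part (controlled by Lemma~\ref{angle-product}), with the local chart trivialization replacing the $\R^2$-lift of~\cite{PSZ17}. The only differences are organizational: the paper packages your (Y4) argument through the abstract cocycle estimate of Lemma~\ref{general-statement} and then verifies its hypotheses in the dedicated Lemma~\ref{final-step} (which is where the hard work of bounding $\sum_n\delta_n$, $\sum_n\prod_k\gamma_k$, and $\prod_n\gamma_n$ via the Section~5 estimates actually lives), whereas you sketch the same estimate directly; and one small imprecision---stable distances can grow by a bounded factor during an excursion (this is exactly what~(\ref{deviation bound}) gives, with $s_1(b)/s_2(a)$ of order one), they need not be monotone---though you immediately invoke the correct bound, so this does not affect the argument.
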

	
	\begin{proof}
		Condition (Y1) follows from the corresponding properties of the pseudo-Anosov homeomorphism $f$ since $H$ is a topological conjugacy. The fact that $\mu_{\gamma^u}\left(\gamma^u \cap \Lambda\right) > 0$ follows from the corresponding property for the $\tilde\gamma^u$ leaves. Suppose $x \in \cl\big(\left(\Lambda \setminus \union_i \Lambda_i^s\right)\cap \gamma^u\big)$. Then either $x$ lies on the boundary of the Markov partition element $P$, or $\tau(x) = \infty$, and since both the Markov partition boundary and the set of $x \in P$ with $\tau(x) = \infty$ are Lebesgue null, we get condition (Y2). Condition (Y5) follows from Kac's formula, since the inducing times are first return times to the base of the tower. 	
		
		To prove condition (Y3), define the \emph{itinerary} $\mathcal I(x) = \{0 = n_0 < n_1 < \cdots < n_{2L+1} = \tau(x)\} \subset \Z$ of a point $x \in \Lambda$, with $L = L(x)$, so that $g^k(x) \in \Us_0$ if and only if $n_{2j-1} \leq k < n_{2j}$ for $j \geq 1$. Assume $\Lambda$ is small enough so that $\mathcal I(x) = \mathcal I(y)$ whenever $y \in \gamma(x) \subset \Lambda$.
		
		Let $x \in \Lambda_i^s$, $y \in \gamma^s(x) \subset \Lambda_i^s$. Denote $x_n = g^n(x)$ and $y_n = g^n(y)$. Note $\gamma^s(x) \subset \Fs^s(x)$. By invariance of the stable and unstable measured foliations $\Fs^s$ and $\Fs^u$, $y_n$ lies on the stable curve $\Fs^s(x_n)$ through $x_n$ for every $n \geq 1$. For $n_{2j} \leq n < n_{2j+1}$, $T_{x_n}\Fs^s(x_n) = E^s_{x_n}$ lies inside $C^-_x$; in fact one can show that $\Fs^s(x_n)$ is an admissible manifold. Thus the segment of $\Fs^s(x_n)$ joining $x_n$ and $y_n$ expands uniformly under the homeomorphism $f^{-1}$. Due to our choice of the number $Q$, there is a number $\beta \in(0,1)$ such that 
		\begin{equation}\label{outer-lipschitz}
		d\left(x_{n_{2j+1}}, y_{n_{2j+1}}\right) \leq \beta^{n_{2j+1} - n_{2j}} d\left(x_{n_{2j}}, y_{n_{2j}}\right) \leq \beta^Q d\left(x_{n_{2j}}, y_{n_{2j}}\right). 
		\end{equation}
		Now we consider $n_{2j-1} \leq n < n_{2j}$. Let $\left[ m_j^1, m_j^2\right] \subseteq \left[ n_{2j-1}, n_{2j}-1\right]$ be the largest interval (possibly empty) with $x_n$ in the closure of $\tilde\Us_1 = \oldphi_0^{-1}\left(D_{\tilde r_1}(0)\right)$ for every $n \in \left[ m_j^1, m_j^2\right]$. By virtue of Lemma \ref{annular bound in M}, there is a uniform $T > 0$ with $m^1_j - n_{2j-1} \leq T$ and $n_{2j} -m^2_j \leq T$. Thus there is a constant $C>0$ so that 
		\begin{equation}\label{annular-lipschitz}
		d\big(x_{m_j^1}, y_{m_j^1}\big) \leq Cd\big(x_{n_{2j-1}}, y_{n_{2j-1}}\big) \quad \textrm{and} \quad d\big(x_{n_{2j}}, y_{n_{2j}} \big) \leq Cd\big(x_{m^2_j}, y_{m^2_j}\big).
		\end{equation}
		Now, let $s(t)$ and $\tilde s(t)$ be solutions to equation (\ref{slowdown vector field}) with $s(0) = x_{m_j^1}$ and $\tilde s(0) = y_{m_j^1}$. Assumption (1) of Lemma \ref{spread-in-slowdown lemma} is satisfied since $y_n$ lies in the stable cone of $x_n$ for every $n$, and Assumption (2) can be assured if our choice of $r_0$ in the slowdown construction of the pseudo-Anosov diffeomorphism is chosen to be sufficiently small. So by the final inequality of this lemma, letting $a = m_j^1$ and $b = m_j^2$, we get: 
		\[
		\norm{\Delta s\left(m_j^2\right)} \leq \sqrt{1+\alpha^2} \frac{s_1\left(m_j^2\right)}{s_2\left(m_j^1\right)} \norm{\Delta s\left(m_j^1\right)}. 
		\]
		Let $\Delta_{kj} s(t) = \Phi^{-1}_{kj}\left(\tilde s(t)\right) - \Phi^{-1}_{kj}\left(s(t)\right)$. Because $\Phi_{kj}$ is uniformly bounded above and below, there is a constant $K > 0$ such that for every $t$ for which $\tilde s(t)$ and $s(t)$ are defined, 
		\begin{equation}\label{norm-equivalence}
		K^{-1} \norm{\Delta_{kj}s(t)} \leq \norm{\Delta s(t)} \leq K\norm{\Delta_{kj}s(t)},
		\end{equation}
		and since the Riemannian metric in $\Us_0$ is given in coordinates by $dt_1^2 + dt_2^2 = \left(\Phi_{kj}^{-1}\right)^*\left(ds_1^2 + ds_2^2\right)$, we get $\norm{\Delta_{kj} s(n)} = d\left(x_n, y_n\right)$ for $n \in \left[m_j^1, m_j^2\right]$. Therefore, combining this observation with (\ref{norm-equivalence}), (\ref{outer-lipschitz}), (\ref{annular-lipschitz}), and (\ref{deviation bound}), we get: 
		\begin{align*}
		d\left(x_{n_{2j}}, y_{n_{2j}}\right) &\leq CK^2\sqrt{1+\alpha^2} \frac{s_1\left(m_j^2\right)}{s_2\left(m_j^1\right)}d\left(x_{m_j^1}, y_{m_j^1}\right) \\
		&\leq C^2K^2\sqrt{1+\alpha^2}\frac{s_1\left(m_j^2\right)}{s_2\left(m_j^1\right)}d\left(x_{n_{2j-1}}, y_{n_{2j-1}}\right) \\
		&\leq C^2K^2\beta^Q \sqrt{1+\alpha^2}\frac{s_1\left(m_j^2\right)}{s_2\left(m_j^1\right)}d\left(x_{n_{2j-2}}, y_{n_{2j-2}}\right) .
		\end{align*}
		Since $s_1\left(m_j^2\right)$ and $s_2\left(m_j^1\right)$ are each of order $r_0$, their quotient is uniformly bounded, so assuming $Q$ is sufficiently large, there is a $0 < \theta_1 < 1$ for which 
		\begin{equation}\label{inner-slowdown-lipschitz}
		d\left(x_{n_{2j}}, y_{n_{2j}}\right) \leq \theta_1 d\left(x_{n_{2j-2}}, y_{n_{2j-2}}\right) 
		\end{equation}
		and a similar bound holds for odd indices of the itinerary. It follows that
		\[
		d\left(g^{\tau(x)}(x), g^{\tau(x)}(y)\right) \leq \theta_1^L d(x,y),
		\]
		where $L$ is determined by the itinerary $\mathcal I(x)$. Condition (Y3a) follows, and (Y3b) follows by the same argument applied to $g^{-1}$. 
		
		To prove condition (Y4), we prove condition (Y4a) and note that (Y4b) can be proved similarly by considering $g^{-1}$ instead of $g$. We use the following general statement, originally presented as Lemma 6.3 in \cite{PSZ17}:
		
		\begin{lemma}\label{general-statement}
			Let $\{A_n\}$, $\{B_n\}$, $0 \leq n \leq N$, be two collections of linear transformations of $\R^d$. Given a subspace $E \subset \R^d$, let $K = K(E,\theta)$ denote the cone of angle $\theta$ around $E$. Assume the subspace $E$ is such that:
			\begin{enumerate}[label=(\alph*)]
				\item $A_n(K) \subset K$ for all $n$; 
				\item There are $\gamma_n > 0$ such that for each $n$, and for any unit vectors $v, w \in K$,
				\[
				\angle\left(A_n v, A_nw\right) \leq \gamma_n \angle(v,w);
				\]
				\item There are $d > 0$ and $\delta_n > 0$ such that for each $n \geq 0$, and every $v \in K$,
				\[
				\norm{A_n v- B_n v} \leq d\delta_n \norm{A_n v}; 
				\]
				\item There is $c > 0$ independent of $n$ such that for every $v \in K$, 
				\[
				\norm{A_n v} \geq c\norm v.
				\]
			\end{enumerate}
			Then there is a $C > 0$, independent of the choice of linear transformations $\{A_n\}$ and $\{B_n\}$, such that for every $v, w \in K$, 
			\begin{equation}\label{cocycle-decay}
			\left| \log \frac{\norm{\prod_{n=0}^N A_n v}}{\norm{\prod_{n=0}^N B_n w}}\right| \leq C \left( d\sum_{n=0}^N \delta_n + \angle(v,w) \sum_{n=0}^N \prod_{k=0}^n \gamma_k\right).
			\end{equation}
		\end{lemma}
		Let $x \in P$ with $N := \tau(x) - 1 < \infty$, and let $y \in \gamma^s(x) \subset P$. For each $n \geq 0$, once again let $x_n = g^n(x)$ and $y_n = g^n(y)$, and in each tangent space $T_{x_n}M$, let $K^+_n = K^+(x_n) \subset T_{x_n}M$ denote the cone of angle $\arctan\alpha$ around $E^u(x_n)$ described in Lemma \ref{strong-cones}. By this lemma, the sequence of cones $\left\{K_n^+\right\}$ is invariant under $Dg$. For each $n$, denote $\tilde A_n = Dg_{x_n} : T_{x_n}M \to T_{x_{n+1}}M$ and $\hat B_n = Dg_{y_n} : T_{y_n} M \to T_{y_{n+1}}M$. Further, since $y_n$ lies on the stable leaf of $x_n$ for all $n$, let $P_n : T_{y_n}M \to T_{x_n}M$ denote parallel translation along the segment of the stable leaf connecting $y_n$ to $x_n$, and denote $\tilde B_n = P_{n+1} \circ \hat B_n \circ P_n^{-1} : T_{x_n} M \to T_{x_{n+1}}M$. Using the orthonormal coordinates $(\xi_1, \xi_2)$ for $T_{x_n}M$ defined previously, so that $\xi_1$ denotes the unstable direction and $\xi_2$ denotes the stable direction (see the discussion preceding Proposition \ref{regular-cones}), we may isometrically identify each tangent space $T_{x_n}M$ with $\R^2$ with the Euclidean metric. Call this isometry $\Xi_n : T_{x_n}M \to \R^2$, and denote $A_n = \Xi_{n+1} \circ \tilde A_n \circ \Xi_n^{-1} : \R^2 \to \R^2$ and $B_n = \Xi_{n+1} \circ \tilde B_n \circ \Xi_n^{-1} : \R^2 \to \R^2$. Also let $K = \Xi_n(K^+_n) \subset \R^2$. Since $\Xi_n$ is an isometry and $K^+_n$ is a cone of angle $\arctan \alpha$ for each $n$, $K$ is independent of $n$ and is thus well-defined. Finally, define the numbers $d = d(x,y)$, as well as
		\[
		\gamma_n = \max_{\substack{v,w \in K \\ \norm v = \norm w = 1}} \left\{ \frac{\angle\left( A_n v,  A_n w\right)}{\angle(v,w)}\right\} \quad \textrm{and} \quad \delta_n = \frac 1 d \max_{v \in K \setminus\{0\}} \left\{\frac{\norm{A_n v - B_n v}}{\norm{A_n v}}\right\}
		\]
		for each $n \geq 0$. 
		
		
		The final step in proving our pseudo-Anosov diffeomorphism $g$ is a Young's diffeomorphism relies on the following technical lemma. Its proof is somewhat similar to the proof of Lemma 6.4 in \cite{PSZ17}, but requires some modifications related to the subtle differences in the slowdown function used in the Katok map as opposed to our pseudo-Anosov diffeomorphism $g$, as well as to the fact that the universal cover of a surface that is not a torus is not $\R^2$. 
		\begin{lemma}\label{final-step}
			The linear operators $A_n$ and $B_n$, as well as the cone $K$, all satisfy the conditions of Lemma \ref{general-statement} using $\gamma_n$, $\delta_n$, $d$, and $N = \tau(x) - 1$ defined above. Furthermore, there are constants $\tilde C > 0$ and $0 < \theta_2 < 1$, independent of $x \in P$, such that:
			\[
			\sum_{n=0}^{\tau(x)-1} \delta_n < \tilde C, \quad \sum_{n=0}^{\tau(x)-1} \prod_{k=0}^n \gamma_k < \tilde C, \quad \textrm{and} \quad \prod_{n=0}^{\tau(x)-1} \gamma_n < \theta_2. 
			\]
		\end{lemma}
		\begin{proof}[Proof of Lemma \ref{final-step}]
			Condition (a) of Lemma \ref{general-statement} follows from the definition of $A_n$, the invariance of the cone family $K^+_n$ under $\tilde A_n$, and the fact that $\Xi_n : T_{x_n}M \to \R^2$ is an isometry for every $n$. Conditions (b) and (c) of Lemma \ref{general-statement} follow from the definitions of $\gamma_n$ and $\delta_n$. Finally, condition (d) of Lemma \ref{general-statement} follows from the fact that $g$ is a diffeomorphism and $\Xi_n$ is an isometry, so $\norm{A_n} = \norm{\Xi_{n+1} \circ Dg_{x_n} \circ \Xi_n^{-1}}$ is uniformly bounded away from 0. 
			
			We begin by proving summability of $\delta_n$. Assume $\diam P < \rho$, where $\rho$ is the injectivity radius of $M$. Since $y_n \in \gamma^s(x_n)$ and $d(x_n, y_n) < \rho$, the tangent vector $v_n = \left(\exp_{x_n}\right)\big|_{B(\rho,n)}^{-1}(y_n)$ lies in the stable cone $K^-_n \subset T_{x_n}M$, where $B(\rho,n) = \{v \in T_{x_n}M : \norm{v} < \rho\}$. By symmetry of the vector field (\ref{slowdown vector field}), we only need to consider the behavior of the trajectories $\{x_n\}$ and $\{y_n\}$ in the ``upper subsector'' $S^s_{j} \cap S^u_{j}$, corresponding to the first quadrant in coordinates given by $\Phi_{j} \circ \oldphi_0$. (Here we denote $S_j^s$, $S_j^u$, and $\Phi_j$ to be the subsets and functions described earlier as $S_{kj}^s$, $S_{kj}^u$, and $\Phi_{kj}$, where we did not assume we only had one singularity.) Further assume $\tilde s_2 := \im\left(\Phi_{j}(\oldphi_0(y))\right) > s_2 := \im\left(\Phi_{j}(\oldphi_0(x))\right)$, so that $\Delta s_2 := \tilde s_2 - s_2 > 0$. Otherwise, exchange the sequences $\{x_n\}$ and $\{y_n\}$. 
			
			Recall the itinerary $\Is(x) = \left\{0 = n_0 < n_1 < \cdots < n_{2L+1} = \tau(x)\right\} \subset \Z$ of the point $x \in \Lambda$, defined via $x_n \in \Us_0$ if and only if $n_{2j-1} \leq n < n_{2j}$. Consider $n_{2j} \leq n < n_{2j+1}$, so $x_n \not\in \Us_0$. In coordinates, $g(s_1, s_2) = (\lambda s_1, \lambda^{-1}s_2)$, so $A_n = B_n$ are constant matrices, so $\delta_n = 0$. 
			
			Suppose now that $n_{2j+1} \leq n < n_{2j+2}$. Denote by $D(s_1, s_2)$ the coefficient matrix of the variational equations of (\ref{slowdown vector field}), given explicitly by
			\begin{equation}\label{variational}
			D(s_1, s_2) = \log\lambda \left[\begin{array}{cc}
			\Psi_p(u) + 2s_1^2\dot{\Psi}_p(u) & 2s_1 s_2 \dot\Psi_p(u) \\
			-2s_1s_2\dot{\Psi}_p(u) & -\Psi_p(u)-2s_2^2\dot{\Psi}_p(u)
			\end{array}\right].
			\end{equation}
			Let $s(t)$, $\tilde s(t) : \left[n,n+1\right]\to\R^2$ be solutions to (\ref{slowdown vector field}) with initial condition $s(n) = x_n$ and $\tilde s(n) = y_n$, and let $A_n(t)$ and $B_n(t)$ be the $2\times2$ Jacobian matrices 
			\[
			A_n(t) = d(\theta_t)\left(\left(\Phi_{kj}\circ\oldphi_k\right)(x_n)\right) \quad \textrm{and} \quad B_n(t) = d(\theta_t)\left(\left(\Phi_{kj}\circ\oldphi_k\right)(y_n)\right),
			\]
			where $\theta_t : \R^2 \to \R^2$ is the time-$t$ map of the flow of \ref{slowdown vector field} on $\R^2$, for $n \leq t \leq n+1$. Then $A_n(1) = A_n$ and $B_n(1) = B_n$ from before, and $A_n(t)$ and $B_n(t)$ are the unique solutions to the systems of differential equations
			\[
			\frac{dA_n(t)}{dt} = D(s(n+t))A_n(t) \quad \textrm{and} \quad \frac{dB_n(t)}{dt} = D(\tilde s(n+t)) B_n(t)
			\]
			with initial conditions $A_n(0) = B_n(0) = \mathrm{Id}$. It follows that $A_n(t) - B_n(t)$ satisfies the differential equation
			\[
			\frac{dA_n(t)}{dt} - \frac{dB_n(t)}{dt} = \big( D(s(n+t)) - D(\tilde s(n+t))\big)A_n(t) + D(\tilde s(n+t))(A_n(t) - B_n(t)).
			\]
			Using the integrating factor $\exp \int_0^t D(\tilde s(n+\tau)) \, d\tau = B_n(t)$, this implies
			\begin{equation}\label{matrix-integrating-factor}
			A_n(t) - B_n(t) = B_n(t) \int_0^t B_n(t)^{-1}\big( D(s(n+t)) - D(\tilde s(n+t))\big) A_n(t) \, d\tau.
			\end{equation}
			Note $\norm{D(s) - D(\tilde s)} \leq \norm{\del D(\xi)}\norm{\Delta s}$, where $\del D(s)$ denotes the total derivative of the matrix $D(s_1, s_2)$ and $\xi = (\xi_1, \xi_2)$, with $\min\{s_i, \tilde s_i \}\leq \xi_i \leq \max\{s_i, \tilde s_i\}$. This, in conjunction with (\ref{matrix-integrating-factor}) and Lemma \ref{dij-estimate}, gives us: 
			\begin{align*}
			\norm{A_n - B_n} &\leq \norm{B_n(1)} \sup_{0 \leq \tau \leq 1} \norm{B_n(\tau)^{-1}} \norm{A_n(\tau)} \norm{D(s(n+\tau)) - D(\tilde s(n+\tau))} \\
			&\leq \norm{B_n(1)} \sup_{0 \leq \tau \leq 1} \norm{B_n(\tau)^{-1}} \norm{A_n(\tau)} \norm{\del D(\xi(n+\tau))} \norm{\Delta s(n+\tau)} \\
			&\leq C_p \sup_{0 \leq \tau \leq 1} \left(\xi_1^2 + \xi_2^2\right)^{(p-4)/2p}(n+\tau) \norm{\Delta s(n+\tau)}, \numberthis\label{A_n - B_n norm bound}
			\end{align*}
			where $C_p$ is a constant that depends on $p$, but not on $n$ (as the matrices $B_n(t)$ and $A_n(t)$ are uniformly bounded above and below in $n$ and in $t$). 
			
			By condition (4) of Lemma \ref{general-statement} and the definition of $\delta_n$, 
			\begin{align*}
			\delta_n &\leq \frac 1{cd(x,y)} \norm{A_n - B_n} = \frac 1 c \frac{d\left(x_{n_{2j+1}}, y_{n_{2j+1}}\right)}{d(x,y)} \frac{\norm{A_n - B_n}}{d\left( x_{n_{2j+1}}, y_{n_{2j+1}}\right)}.
			\end{align*}
			We now claim that
			\begin{equation}\label{Ds_j estimate}
			\Ds_j := \sum_{n=n_{2j+1}}^{n_{2j+2}-1} \frac{\norm{A_n - B_n}}{d\left(x_{n_{2j+1}}, y_{n_{2j+1}}\right)} \leq C, 
			\end{equation}
			where $C$ is a constant independent of $j$. If this is true, then because $\delta_n = 0$ for $n_{2j} \leq n < n_{2j+1}$, by (\ref{inner-slowdown-lipschitz}),
			\begin{align*}
			\sum_{n=0}^{\tau(x)-1} \delta_n &= \sum_{j=1}^L \sum_{n=n_{2j+1}}^{n_{2j+2}-1} \delta_n = \sum_{j=1}^L \frac 1 c \frac{d\left(x_{n_{2j+1}}, y_{n_{2j+1}}\right)}{d(x,y)} \sum_{n=n_{2j+1}}^{n_{2j+2}-1} \frac{\norm{A_n - B_n}}{d\left(x_{n_{2j+1}}, y_{n_{2j+1}}\right)} \\
			&= \frac C c \sum_{j=1}^L \theta_1^j \leq \tilde C,
			\end{align*}
			and because $\theta_1$ is independent of $x, y \in P$, and $c$ and $C$ are both of order $\sup_{n} \norm{A_n}$, $\tilde C$ is also independent of our choice of $x$ and $y$. 
			
			Recall that $\left[m_j^1, m_j^2\right] \subseteq \left[n{2j+1}, n_{2j+2}-1\right]$ is the largest (possibly empty) interval of integers with $x_m \in D_{\tilde r_1}$ for each $n \in \left[m_j^1, m_j^2\right]$, and $\left[m_j^1, T_j\right]$ is the largest time interval for which $s_1(t) \leq s_2(t)$ for all $m_j^1 \leq t \leq T_j$. If $\left[m_j^1, m_j^2\right]$ is empty, then $s(t) \in \left(\Phi_{kj}\circ\oldphi_k\right)\left(D_{\tilde r_0} \setminus D_{\tilde r_1}\right)$ for all $t \in \left[n_{2j+1}, n_{2j+2}-1\right]$. In this instance, by Lemma \ref{annular bound in M}, $n_{2j+2} - n_{2j+1} \leq T$ is uniformly bounded, and hence (\ref{Ds_j estimate}) is a sum of uniformly boundedly many terms that are uniformly bounded, by (\ref{A_n - B_n norm bound}). 
			
			Now suppose $\left[m_j^1, m_j^2\right]$ is nonempty. The sum in (\ref{Ds_j estimate}) splits into four different sums: 
			\begin{equation}\label{we-four-sums}
			\Ds_j = \left( \sum_{n=n_{2j+1}}^{m_j^1 - 1} + \sum_{n = m_j^1}^{T_j-1} + \sum_{n=T_j}^{m_j^2} + \sum_{n=m_j^2+1}^{n_{2j+2}-1}\right)\frac{\norm{A_n - B_n}}{d\left(x_{n_{2j+1}}, y_{n_{2j+1}}\right)} .
			\end{equation}
			We show that each of these sums is themselves uniformly bounded. This is true for the first and fourth sum, because in these instances, $s(t)$ is in the annular region $\left(\Phi_{kj}\circ\oldphi_k\right)\left( D_{\tilde r_0} \setminus D_{\tilde r_1}\right)$, and so the number of summands is uniformly bounded by Lemma (\ref{annular bound in C}). 
			
			To show this for the middle two sums, note that since $\tilde s(t) \in \R^2$ is in the stable cone of $s(t)$ for all $t$ in the domain, we have 
			\begin{equation}\label{Delta s_1 and Delta s_2}
			|\Delta s_1| \leq \alpha \Delta s_2 \leq \Delta s_2. 
			\end{equation}
			First, suppose $m_j^1 \leq n \leq T_j-1$, so that $s_1(t) \leq s_2(t)$. We would like to apply Lemma (\ref{spread-in-slowdown lemma}) in the interval $\left[m_j^1, n\right]$, so we require $\frac{\Delta s_2(m_j^1)}{s_2(m_j^1)} \leq \frac{1-\alpha}{72}$. This is attainable by choosing $r_0$ to be sufficiently small and $Q$ in (\ref{Q-definition}) to be sufficiently large. Applying Lemma (\ref{spread-in-slowdown lemma}) for $n \leq T_j-1$, and $0 \leq \tau \leq 1$, we get:
			\begin{align*}
			|\Delta s(n+\tau)| &\leq 2\Delta s_2(n+\tau) \\
			&\leq 2\frac{\Delta s_2(m_j^1)}{s_2(m_j^1)} s_2(n+\tau) \left(1 + 2^{\frac{p-2}p} C_0 s_2(m_j^1)^{\frac{2p-4}p} (n+\tau - m_j^1)\right)^{-\beta} \\
			&\leq 2\frac{\Delta s_2(m_j^1)}{s_2(m_j^1)} s_2(n+\tau) \left(1 + C_0 s_2(m_j^1)^{\frac{2p-4}p} (n+\tau - m_j^1)\right)^{-\beta}\numberthis \label{spread-lemma-revisited}
			\end{align*}
			since $\beta = 2^{-(3p-2)/p}(1-\alpha) > 0$. Recalling $\xi(t) = (\xi_1(t), \xi_2(t))$ is such that $\min\{s_i, \tilde s_i \}\leq \xi_i \leq \max\{s_i, \tilde s_i\}$ for $i=1,2$, (\ref{xi-est-preT}) gives us
			\[
			s_2^2(t) \leq \left(\xi_1^2 + \xi_2^2\right)(t) \leq 2(1+\kappa)^2s_2^2(t) \leq Cs_2^2(t)
			\]
			as $\kappa = \frac{\Delta s_2}{s_2} \leq \frac{1-\alpha}{72}$. Estimates (\ref{A_n - B_n norm bound}) and (\ref{spread-lemma-revisited}) give us:
			\begin{align*}
			&\norm{A_n - B_n} \\
			&\leq C\frac{\norm{\Delta s(m_j^1)}}{s_2(m_j^1)}\sup_{0 \leq \tau \leq 1} s_2(n+\tau)^{\frac{2p-4}p} \left(1+C_0 s_2(m_j^1)^{\frac{2p-4}p}(n+\tau-m_j^1)\right)^{-\beta},
			\end{align*}
			where we are using the fact that $|\Delta s_2| \leq \norm{\Delta s}$. Applying Lemma \ref{s1 and s2 approx}(b) on the interval $\left[m_j^1, n+1\right]$ gives us 
			\begin{align*}
			&\norm{A_n - B_n} \\
			&\leq C\frac{\norm{\Delta s(m_j^1)}}{s_2(m_j^1)}\sup_{0 \leq \tau \leq 1} s_2(m_j^1)^{\frac{2p-4}p} \left(1+C_0 s_2(m_j^1)^{\frac{2p-4}p}(n+\tau-m_j^1)\right)^{-1-\beta} \\ 
			&=  C\norm{\Delta s(m_j^1)} s_2(m_j^1)^{\frac{p-4}p} \left(1+C_0 s_2(m_j^1)^{\frac{2p-4}p}(n-m_j^1)\right)^{-1-\beta}.
			\end{align*}
			We make three observations. First, recalling that $n=m_j^1$ is the first time that $s(n)$ is within $\tilde r_1$ of the origin, we observe that $s_2(m_j^1)$ is bounded above and below by a constant multiple of $\tilde r_1$, independent of $x \in \Lambda$ or $j = 1, \ldots, L$. Second, $\norm{\Delta s(m_j^1)} = d\left(x_{m_j^1}, y_{m_j^1}\right)$, by definition of our Riemannian metric in $\Us_0$. Third, since Lemma \ref{annular bound in C} implies $m_j^1 - n_{2j+1}$ is bounded by a value independent of $x$ or $j$, the value $\frac{d\big(x_{m_j^1}, y_{m_j^1}\big)}{d\big(x_{2j+1}, y_{2j+1}\big)}$ is uniformly bounded independently of $x, y \in \Lambda$ or $j \geq 1$. These three observations imply: 
			\begin{align*}
			\frac{\norm{A_n - B_n}}{d\left(x_{2n+1}, y_{2n+1}\right)} \leq C \left(1+C_0 s_2(m_j^1)^{\frac{2p-4}p}(n-m_j^1)\right)^{-1-\beta}.
			\end{align*}
			Therefore, 
			\[
			\sum_{n=m_j^1}^{T_j-1} \frac{\norm{A_n - B_n}}{d\left(x_{2n+1}, y_{2n+1}\right)} \leq \sum_{n=m_j^1}^{\infty}C \left(1+C_0 s_2(m_j^1)^{\frac{2p-4}p}(n-m_j^1)\right)^{-1-\beta},
			\]
			which is uniformly bounded in $j$. Therefore the second term in (\ref{we-four-sums}) is uniformly bounded in $j$. 
			
			Finally, we turn our attention to the case where $T_j \leq n \leq m_j^2$, where we have $s_1 \geq s_2$. By symmetry, we have that $T_j \geq \left(m_j^2 + m_j^1 - 2\right)/2$. By (\ref{Delta s_1 and Delta s_2}) and the second inequality in Lemma \ref{spread-in-slowdown lemma}, we have: 
			\begin{align*}
			\lVert \Delta s(n&+\tau)\rVert\leq 2\Delta s_2(n+\tau) \\
			&\leq 2\frac{\Delta s_2(T_j)}{s_1(T_j)} s_1(n+\tau) \left( \frac{1+2^{(p-2)/p}C_0 s_1(m_2^j)^{(2p-4)/p}(m^2_j-n-\tau)}{1+2^{(p-2)/p}C_0 s_1(m_2^j)^{(2p-4)/p}(m^2_j-T_j)}\right)^{\beta}.
			\end{align*}
			Since $\min\{s_i, \tilde s_i\} \leq \xi_i \leq \max\{s_i, \tilde s_i\}$ for $i=1,2$, we have $s_i - |\Delta s_i| \leq \xi_i \leq s_i + |\Delta s_i|$. In particular, 
			\begin{align*}
			\xi_1^2 + \xi_2^2 &\geq \xi_1^2 \geq (s_1 - |\Delta s_1|)^2 = s_1^2 \left( 1-\frac{|\Delta s_1|}{s_1}\right)^2 \geq s_1^2\left(1-\frac{\Delta s_2}{s_1}\right)^2 \geq C^{-1}s_1^2,
			\end{align*}
			and
			\[
			\xi_1^2 + \xi_2^2 \leq \left(s_1 + |\Delta s_1|\right)^2 + \left(s_2 + |\Delta s_2|\right)^2 \leq 2\left(s_1 + \Delta s_2\right)^2 = 2s_1\left(1+\frac{\Delta s_2}{s_1}\right)^2 \leq Cs_1^2,
			\]
			which both follow because $\frac{\Delta s_2}{s_1}$ is monotonically decreasing by (\ref{chi-est-T1}). Together, these two estimates imply 
			\[
			\left(\xi_1(n+\tau)^2 + \xi_2(n+\tau)^2\right)^{(p-4)/2p} \leq Cs_1(n+\tau)^{(p-4)/p}.
			\]
			Applying (\ref{A_n - B_n norm bound}) and inequality (a) in Lemma \ref{s1 and s2 approx} to these inequalities gives us:
			\begin{align*}
			&\norm{A_n - B_n} \leq C \sup_{0 \leq \tau \leq 1} \bigg[s_1(n+\tau)^{(p-4)/p} \norm{\Delta s(n+\tau)}\bigg]\\
			&\leq 2C\frac{\Delta s_2(T_j)}{s_1(T_j)} \sup_{0 \leq \tau \leq 1} \left[s_1(n+\tau)^{\frac{2p-4}p}  \left( \frac{1+2^{\frac{p-2}p}C_0 s_1(m^2_j)^{\frac{2p-4}p}(m^2_j-n-\tau)}{1+2^{\frac{p-2}p}C_0 s_1(m^2_j)^{\frac{2p-4}p}(m^2_j-T_j)}\right)^{\beta}\right] \\
			&\leq 2C\frac{\Delta s_2(T_j)}{s_1(T_j)} s_1(m_j^2)^{\frac{2p-4}p}  \sup_{0 \leq \tau \leq 1} \left[\frac{\left( 1+2^{\frac{p-2}p}C_0 s_1(m^2_j)^{\frac{2p-4}p}(m^2_j-n-\tau)\right)^{\beta-1}}{\left(1+2^{\frac{p-2}p}C_0 s_1(m^2_j)^{\frac{2p-4}p}(m^2_j-T_j)\right)^{\beta}}\right].
			\end{align*}
			By (\ref{dkappa-approx-2}), since $s_1(m_j^2)$ and $s_2(m_j^1)$ are uniformly bounded,
			\begin{align*}
			\frac{|\Delta s_2(T_j)|}{s_1(T_j)}s_1(m_j^2)^{(2p-4)/p} &= \frac{|\Delta s_2(T_j)|}{s_2(T_j)}s_1(m_j^2)^{(2p-4)/p} \\ &\leq \frac{|\Delta s_2(m_j^1)|}{s_2(m_j^1)}s_1(m_j^2)^{(2p-4)/p}
			\leq C|\Delta s_2(m_j^1)|.
			\end{align*}
			Furthermore, since $\frac{|\Delta s_2(m_j^1)|}{d\left(x_{n_{2j+1}}, y_{n_{2j+1}}\right)}$ is uniformly bounded, we finally obtain: 
			\[
			\frac{\norm{A_n - B_n}}{d\left(x_{n_{2j+1}}, y_{n_{2j+1}}\right)} \leq C\frac{\left( 1+2^{\frac{p-2}p}C_0 s_1(m^2_j)^{\frac{2p-4}p}(m^2_j-n)\right)^{\beta-1}}{\left(1+2^{\frac{p-2}p}C_0 s_1(m^2_j)^{\frac{2p-4}p}(m^2_j-T_j)\right)^{\beta}}.
			\]
			Therefore, 
			\begin{align*}
			\sum_{n=T_j}^{m_j^2} \frac{\norm{A_n - B_n}}{d\left(x_{n_{2j+1}}, y_{n_{2j+1}}\right)} &\leq C \left(1+2^{\frac{p-2}p}C_0 s_1(m^2_j)^{\frac{2p-4}p}(m^2_j-T_j)\right)^{-\beta} \\
			&\qquad \qquad \times \sum_{n=T_j}^{m_j^2} \left( 1+2^{\frac{p-2}p}C_0 s_1(m^2_j)^{\frac{2p-4}p}(m^2_j-n)\right)^{\beta-1} \\
			&\leq C \left(1+2^{\frac{p-2}p}C_0 s_1(m^2_j)^{\frac{2p-4}p}(m^2_j-T_j)\right)^{-\beta} \\
			&\qquad \qquad \times \left( 1 + \int_{0}^{m_j^2-T_j} \left( 1+2^{\frac{p-2}p}C_0 s_1(m^2_j)^{\frac{2p-4}p}\tau\right)^{\beta-1} \, d\tau \right) \\
			&\leq C \left(1+2^{\frac{p-2}p}C_0 s_1(m^2_j)^{\frac{2p-4}p}(m^2_j-T_j)\right)^{-\beta} \\
			&\qquad \qquad \times \left( 1 + \frac{\left(1+2^{\frac{p-2}p}C_0 s_1(m_j^2)^{\frac{2p-4}p}(m_j^2-T_j)^{\frac{p-2}p}\right)^\beta}{2^{\frac{p-2}p}C_0 s_1(m_j^2)^{\frac{2p-4}p}\beta}\right) \\
			&\leq C\left(1+\left(2^{\frac{p-2}p}\tilde r_1^{\frac{2p-4}p}C_0\beta\right)^{-1}\right),
			\end{align*}
			where the second inequality follows from the fact that the integrand is a decreasing function of $\tau$, and the final inequality follows from the fact that $\tilde r_1 \leq s_1(m_j^2)$ by definition of $m_j^2$. Therefore the third sum of (\ref{we-four-sums}) is uniformly bounded. This completes the proof that $\delta_n$ is a summable sequence. 
			
			We now prove the estimates involving $\gamma_k$. For $n \in \left[n_{2j}, n_{2j+1}-1\right]$, we have $x_n, y_n \not\in \Us_0$, where $Dg_{x_n}$ and $Dg_{y_n}$ are constant hyperbolic linear transformations. For these values for $n$, the maps contract angles uniformly, so there is a $\gamma > 0$ for which $\gamma_n < \gamma < 1$ for all $n$. For $n \in \left[m_j^1, m_j^2\right]$, we have $x_n \in \Us_1$, so applying Lemma \ref{angle-product}, 
			\begin{align*}
			\prod_{n=m_j^1}^{m_j^2-1} \gamma_n &\leq \left(1+C_0 s_2(m_j^1)^{(2p-4)/p}\left(m_j^2 - m_j^1\right)\right)^{-p/(p-2)}\\
			&\leq \left(1+C\left(m_j^2 - m_j^1\right)\right)^{-p/(p-2)},
			\end{align*}
			since $s_2(m_j^1)$ is uniformly bounded. Because the interval of integers $\left[m_j^1, m_j^2\right]$ differs from $\left[n_{2j+1}, n_{2j+2}-1\right]$ by a finite set, and the cardinality of this finite set is uniformly bounded in $j$ by Lemma \ref{annular bound in C}, there is a uniform constant $C' > 0$ for which 
			\[
			\prod_{j=n_{2j+1}}^{n_{2j+2}-1} \gamma_n \leq C'\left( 1+C\left(m_j^2 - m_j^1\right)\right)^{-p/(p-2)} \leq C'.
			\]
			In particular, 
			\begin{equation}\label{theta3}
			\prod_{n=n_{2j}}^{n_{2j+2}-1} \gamma_n \leq C'\gamma^{n_{2j+1}-n_{2j}} < \theta_3,
			\end{equation}
			for some constant $\theta_3 > 0$. The third estimate of the lemma follows.
			
			To prove the second and final estimate of the lemma, we observe that a similar estimate to (\ref{theta3}) may be made with the upper limit replaced with $n_{2j+1}-1$. In particular, for $n_{2j+1} \leq n \leq n_{2j+2}-1$, 
			\[
			\prod_{k={n_{2j+1}}}^n \gamma_j \leq C'\left(1+C(n - n_{2j+1})\right)^{-p/(p-2)}
			\]
			and
			\[
			\prod_{n=n_{2j}}^{n_{2j+1}-1} \gamma_n < \theta_3'
			\]
			for some $\theta_3' > 0$ that is uniformly bounded. Therefore, 
			\begin{align*}
			\sum_{n=0}^{\tau(x)} &\prod_{k=0}^n \gamma_k = \sum_{j=0}^{L(x)} \sum_{n=n_{2j}}^{n_{2j+2}-1} \prod_{k=0}^n \gamma_k = \sum_{j=0}^{L(x)} \left( \prod_{k=0}^{n_{2j}-1} \gamma_k \sum_{n=n_{2j}}^{n_{2j+2}-1} \prod_{k=n_{2j}}^n \gamma_k\right) \\
			&\leq \sum_{j=0}^{L(x)} \left( \theta_3^j\left(\sum_{n=n_{2j}}^{n_{2j+1}-1} \prod_{k=n_{2j}}^{n} \gamma_k + \prod_{k=n_{2j}}^{n_{2j+1}-1} \gamma_k \sum_{n=n_{2j+1}}^{n_{2j+2-1}} \prod_{k=n_{2j+1}}^{n} \gamma_k\right)\right) \\
			&\leq \sum_{j=0}^{L(x)} \left(\theta_3^j\left(\sum_{n=n_{2j}}^{n_{2j+1}-1} \gamma^{n-n_{2j}} + \theta_3' \sum_{n_{2j+1}}^{n_{2j+2}-1} \left(1+C(n-n_{2j+1})\right)^{-p/(p-2)}\right)\right).
			\end{align*}
			Because the two sums in the inner parentheses above are both uniformly bounded, there is a $C'' > 0$ for which
			\[
			\sum_{n=0}^{\tau(x)} \prod_{k=0}^n \gamma_k \leq C''\sum_{j=0}^{L(x)} \theta_3^j,
			\]
			which gives us the second estimate in the lemma. 
		\end{proof}	
		We continue with the proof of the theorem. Observe that $$\left(\Xi_{\tau(x)}^{-1} \circ \prod_{n=0}^{\tau(x)-1} A_n \circ \Xi_0 \right)(v) = D\big(g^{\tau(x)}\big)_x v \quad \forall \: v \in T_x M,$$
		and
		$$\left(P_{\tau(x)}^{-1} \circ \Xi_{\tau(x)}^{-1} \circ \prod_{n=0}^{\tau(x)-1} B_n \circ \Xi_0 \circ P_0\right)(v) = D\big(g^{\tau(x)}\big)_y v \quad \forall\:v \in T_yM.$$
		In particular, since both $\Xi_n$ and $P_n$ are linear isometries for all $n \geq 0$, we have $$\norm{\prod_{n=0}^{\tau(x)-1} A_n \overline v} = \norm{D\left(g^{\tau(x)}\right)_x v} \quad \forall \: v \in T_x M,$$ and $$\norm{\prod_{n=0}^{\tau(x)-1} B_n \overline w} = \norm{D\left(g^{\tau(x)}\right)_y w} \quad \forall\:w \in T_y M,$$ where $\overline v = \Xi_0 v \in \R^2$ and $\overline w = (\Xi_0 \circ P_0) w \in \R^2$. Additionally, for $v \in T_{x_n}M$ and $w \in T_{y_n}M$, 
		\[
		\angle \left(Dg_{x_n}v, \left( P_{n+1} \circ Dg_{y_n}\right)w\right) = \angle \left(A_n \overline v, B_n \overline w\right),
		\]
		where here $\overline v = \Xi_n v$ and $\overline w = (\Xi_n \circ P_n)w$. 
		
		Now, suppose $v \in K^+(x)$ and $w \in K^+(y)$, and once again denote $\overline v = \Xi_0 v$ and $\overline w = (\Xi_0 \circ P_0)w$. Since $P_0 w \in K^+(x)$, Lemmas \ref{general-statement} and \ref{final-step} yield:
		\begin{equation}\label{first-distortion}
		\left| \log\frac{\norm{D\left(g^{\tau(x)}\right)_xv}}{\norm{D\left(g^{\tau(x)}\right)_yw}}\right| = \left| \log\frac{\norm{\prod_{n=0}^{\tau(x)-1}A_n \overline v}}{\norm{\prod_{n=0}^{\tau(x)-1}B_n \overline w}}\right| \leq C \tilde C\big(d(x,y) + \angle\left( v, P_0 w\right)\big) 
		\end{equation}
		where we are using the fact that $ \angle\left( v, P_0 w\right) = \angle\left(\overline v, \overline w\right)$. Furthermore, for $v \in T_xM$ and $w \in T_yM$, the definition of $\gamma_n$ and Lemma \ref{final-step} give us:
		\begin{align*}
		&\frac{\angle\left(D\left(g^{\tau(x)}\right)_{x}v, \left(P_{\tau(x)} \circ D\left(g^{\tau(x)}\right)_{y}\right)w\right)}{\angle(v, P_0 w)} \\
		&\qquad \qquad = \prod_{n=0}^{\tau(x)-1} \frac{\angle\left(Dg_{x_n}\left(Dg_x^nv\right),\left(P_{n+1} \circ Dg_{y_n} \right)\left(Dg_y^n w\right)\right)}{\angle\left(Dg_x^n v, P_n\left(Dg_n^n w\right) \right) } \\
		&\qquad \qquad = \prod_{n=0}^{\tau(x)-1} \frac{\angle\left(A_n \left(\Xi_n\left(Dg_x^n v\right)\right), B_n\left(\left(\Xi_n \circ P_n\right)\left(Dg_y^n w\right)\right)\right)}{\angle\left(\Xi_n\left(Dg_x^nv\right), \left(\Xi_n \circ P_n\right)\left(Dg_y^n w\right)\right)} \\ \\
		&\qquad \qquad \leq \prod_{n=0}^{\tau(x)-1} \gamma_n \leq \theta_2. \numberthis\label{theta-2}
		\end{align*}
		Denote $\hat G : \Lambda \to \Lambda$ by $\hat G(x) = g^{\tau(x)}(x)$. If $v^n \in E^u\left(\hat G^n(x)\right)$ and $w^n \in E^u\left(\hat G^n(y)\right)$, then there are $v \in E^u(x)$ and $w \in E^u(y)$ such that $v^n = D\hat G^n_x v$ and $w^n = D\hat G^n_y w$. By (\ref{first-distortion}), (\ref{theta-2}), and condition (Y3), 
		\begin{align*}
		\left| \log\frac{\norm{D\hat G_{\hat G^n(x)}v^n}}{\norm{D\hat G_{\hat G^n(y)}w^n}}\right| &\leq C\tilde C\Bigg(d\bigg(\left(g^{\tau(x)}\right)^n(x), \left(g^{\tau(x)}\right)^n(y)\bigg) \\
		&\qquad\qquad+ \angle\left(D\left(g^{\tau(x)}\right)^n_x v, P_{\tau(x)}D\left(g^{\tau(x)}\right)^n_y w\right)\Bigg) 
		\\
		&\leq C\tilde C\big(a^n d(x,y) + \theta_2^n \angle\left(v, P_0 w\right)\big).
		\end{align*}
		Since $0 < a, \theta_2 < 1$, this proves (Y4)(a). 
	\end{proof}
	
	\section{Proof of Theorem \ref{main-theorem}}
	
	We now drop our assumption that the pseudo-Anosov diffeomorphism $g$ admits only one singularity. By Proposition \ref{Young tower nuke} and Theorem \ref{PAD tower}, since $g : M \to M$ is a Young's diffeomorphism, the geometric potential $\phi_1(x) = -\log\left|Dg|_{E^u(x)}\right|$ admits an equilibrium measure, which is the unique $g$-invariant SRB measure. This is the same measure as $\mu_1$ introduced in Proposition \ref{PAD measure}, as $\mu_1$ is absolutely continuous along the unstable foliations and thus an SRB measure. (This justifies our use of the notation $\mu_1$ to describe this measure). 
	
	By Proposition \ref{f-g conjugacy}, the pseudo-Anosov homeomorphism $f$ and the pseudo-Anosov diffeomorphism $g$ possess the same topological and combinatorial data, including topological entropy. Thus the number $S_n$ of $s$-sets $\Lambda_i^s \subset \Lambda$ with inducing time $\tau_i = n$ for $g$ is the same for both $f$ and $g$. Therefore by Lemma \ref{number-of-inducing-sets}, there is an $h < \htop(g) = \htop(f)$ such that $S_n \leq e^{hn}$. 
	
	Recall that $\nu$ is the measure on $M$ given locally by the product of lengths of local stable and unstable leaves described in Theorem \ref{PAH measure}, and $\mu_1$ is the measure given by the Riemannian metric $\zeta$ described in Proposition \ref{PAD measure}. By Theorem \ref{PAH measure}, $\nu$ has a density with respect to $\mu_1$, which vanishes at the singularities. By Proposition 10.13 and Lemma 10.22 of \cite{FLP79}, $h_\nu(f) = \htop(f) = \log\lambda$, so in fact $h < h_\nu(f)$. Since $\nu = \mu_1$ on $M \setminus \Us_0$, and $\mu_1(\Us_0)$ may be made arbitrarily small by shrinking $r_0$ if necessary, the Pesin entropy formula implies 
	\begin{align*}
	h_{\mu_1}(g) &= \int_M \log \left| Dg|_{E^u(x)}\right| \, d\mu_1(x) \\
	&= \int_{M \setminus \Us_0} \log\lambda \: d\nu + \int_{\Us_0}  \log \left| Dg|_{E^u(x)}\right| \, d\mu_1(x) < h_\nu(f) + \epsilon, \numberthis \label{entropy-difference}
	\end{align*}
	where $\epsilon>0$ is as small as we need. From this we conclude that $h < h_{\mu_1}(g)$. Hence by Proposition \ref{Young tower nuke}, there is a $t_0 < 0$ for which for all $t \in (t_0, 1)$, there is a measure $\mu_t$ on $P$ that is an equilibrium state for the geometric $t$-potential $\phi_t$. 
	
	Since $f$ is Bernoulli, every power of $f$ is ergodic, so $f$ satisfies the arithmetic condition. Since $f$ and $g$ are topologically conjugate, this is also true for $g$. 
	
	We now prove (\ref{4.2 bound}). If $x, y \in \Lambda_i^s$ and $y \in \gamma^s(x)$, the distance $d\left(f^j(x) f^j(y)\right)$ decreases with $j$. On the other hand, if $y \in \gamma^u(x)$, then $d\left(f^j(x), f^j(y)\right)$ increases with $j$, but is bounded by $\diam\,P$ when $j = \tau(x)$. An application of the triangle inequality and hyperbolic product structure of $\Lambda$ now yields (\ref{4.2 bound}). It now follows that $\mu_t$ has exponential decay of correlations and satisfies the Central Limit Theorem, by Proposition \ref{Young tower nuke}. Since $(M, g, \mu_t)$ has exponential decay of correlations, this dynamical system is mixing. By Theorem 2.3 in \cite{FZTowers}, $(M, g, \mu_t)$ is Bernoulli. 
	
	To show $r_0$ may be chosen to accommodate any $t_0$, we show that as $r_0 \to 0$, we may take $t_0 \to -\infty$. Fix $\epsilon > 0$, and choose $x \in \Lambda_i^s$. Recall $g = f$ outside of $\tilde\Us_0$; in particular, the local stable and unstable leaves are unchanged outside of $\tilde \Us_0$. Assume $x$ is a generic point for the SRB measure $\mu_1$. Let $\tilde \Us_2 = \union_{k=1}^m \oldphi_k^{-1}\left(D_{\tilde r_1/4}\right)$, and write $\tau_i$ as
	\[
	\tau_i = \sum_{j=1}^s n_j,
	\]
	where the integers $n_j$ are chosen like so: 
	\begin{itemize}
		\item The integer $n_1$ is the first time when $g^{n_1}(x) \in \tilde \Us_0 \setminus \tilde\Us_2$; 
		\item The integer $n_2$ is the first time after $n_1$ when $g^{n_1 + n_2}(x) \in \tilde\Us_2$; 
		\item the number $n_3$ is the first time after $n_1 + n_2$ when $g^{n_1 + n_2 + n_3}(x) \in \tilde\Us_0 \setminus \tilde\Us_2$;
		\item the number $n_4$ is the first time after $n_1 + n_2 + n_3$ when $g^{n_1 + n_2 + n_3 + n_4}(x) \not\in \tilde\Us_0$;
	\end{itemize}
	and so on. It is possible that some $n_j$ may be equal to 0, but this does not change our calculations. Observe $Q \leq n_1$, where $Q$ is the number from (\ref{Q-definition}). If $r_0$ is sufficiently small, $Q$ is large enough to ensure that 
	\begin{equation}\label{n_1 est}
	\log\left|J^u g^{n_1}(x)\right| \leq n_1(\log\lambda + \epsilon).
	\end{equation}
	By (\ref{variational}), for $x \in \tilde\Us_0 \setminus \tilde\Us_2$, we have $\log \left|J^u g(x)\right| \leq \log N$ for some constant $N$ independent of $r_0$ or of the number of prongs $p$. Therefore, 
	\begin{equation}\label{n_2 est}
	\log\left|J^u g^{n_2}(x)\right| \leq n_2 \log N \quad \textrm{and} \quad \log\left|J^u g^{n_4}(x)\right| \leq n_4 \log N.
	\end{equation}
	For $x \in \tilde\Us_2$, if $x$ is in a neighborhood of a singularity with $p$ prongs, $\Psi_p(u) = \left( \frac p 2\right)^{(2p-4)/p} u^{(p-2)/p}$ and $\dot{\Psi}_p(u) = \frac{p-2}p \left(\frac p 2\right)^{(2p-4)/p} u^{-2/p}$. By (\ref{variational}), for such points $x$, $\log \left| J^u g(x)\right|\leq \log\lambda$. Therefore, 
	\begin{equation}\label{n_3 est}
	\log\left| J^u g^{n_3}(x)\right| \leq n_3 \log\lambda.
	\end{equation}
	Similar estimates hold for the other $n_j$. Observe that 
	\begin{equation}\label{J^uF sum}
	\log\left|J^u \hat G(x)\right| \leq \sum_{j=1}^s \log\left|J^u g^{n_1 + \cdots + n_j}\left(g^{n_1 + \cdots + n_{j-1}}(x)\right)\right|.
	\end{equation}
	Similarly to Lemma \ref{annular bound in M}, the number of iterates the orbit of $x$ spends in $\hat\Us_0 \setminus \hat\Us_2$ is bounded above by a constant $T_0'$ independent of both $r_0$ and $p$. It follows from (\ref{n_1 est})-(\ref{J^uF sum}) and the definition of $\lambda_1$ in (\ref{lambda_1 def}) that
	\[
	\log\lambda_1 \leq \log\lambda + \epsilon + \frac{2T_0' \log N}{Q} \leq \log\lambda + 2\epsilon.
	\]
	Meanwhile, (\ref{entropy-difference}) implies that for sufficiently small $r_0$, 
	\begin{equation}\label{2.1 statement 4}
	\left| \int_M \log\left|Dg|_{E^u(x)}\right|\,d\mu_1(x) - \log\lambda\right| < \epsilon,
	\end{equation}
	or equivalently, 
	\[
	\log\lambda - \epsilon \leq h_{\mu_1}(g) \leq \log\lambda+\epsilon.
	\]
	Furthermore, one can show $\log\lambda_1 \geq h_{\mu_1}(g)$ (see Remark 3 in \cite{PSZ17}, which is a general statement about Young diffeomorphisms). Therefore, 
	\[
	\log\lambda - \epsilon \leq h_{\mu_1}(g) \leq \log\lambda_1 \leq \log\lambda+2\epsilon.
	\]
	It follows that the difference $\log\lambda_1 - h_{\mu_1}(g)$ can be made arbitrarily small if $r_0$ is chosen to be sufficiently small. By (\ref{t_0 def}), this shows that $t_0 \to -\infty$ as $r_0 \to 0$. 
	
	We now show how $\mu_t$ may be extended to a measure on $M$, as opposed to a measure only on images of the base of the tower. Suppose we have another element $\tilde P$ of the Markov partition satisfying (\ref{Q-definition}). As above, there is a $\tilde t_0 = t_0(\tilde P) < 0$ for which for every $t \in (\tilde t_0, 1)$, there is a unique equilibrium state $\tilde\mu_t$ for the geometric $t$-potential among all measures $\mu$ for which $\mu(\tilde P) > 0$, and $\tilde\mu_t(U) > 0$ for all open sets $\tilde U \subset P$. Since $g$ is topologically conjugate to a Bernoulli shift, $g$ is topologically transitive. Therefore for any open sets $\tilde U \subset \tilde P$ and $U \subset P$, there is an integer $k \geq 0$ for which $g^k(\tilde U) \cap U \neq \emptyset$. By invariance of $\tilde\mu_t$ and $\mu_t$ under $g$, it follows that $\mu_t = \tilde \mu_t$. 
	
	Consider now an element of the Markov partition that does not satisfy (\ref{Q-definition}). If $r_0$ is sufficiently small, the union of all partition elements satisfying (\ref{Q-definition}) form a closed set $Z \subset M$, whose complement is a neighborhood of the singular set $S$ with each component containing a single singularity. If $\omega$ is a $g$-invariant probability measure that does not give weight to partition elements in $Z$, then $\omega$ is a convex combination of the $\delta$-measures concentrated at the singularities. If $P$ is our partition element in the proof of Theorem \ref{PAD tower}, we observe $\omega(P) = 0$, so $\omega$ is clearly out of consideration as an equilibrium measure for $\phi_t$. So any equilibrium measure for $(M,g)$ must charge partition elements in $Z$. Therefore, set 
	\[
	t_0 = \max_{P \in \Ps, \, P \cap Z \neq \emptyset} t_0(P).
	\]
	Since $t_0 \to -\infty$ as $r_0 \to 0$ and $\mu_t(P) > 0$ for $t_0 < t < 1$, this $t_0$ suffices for the first statement of Theorem \ref{main-theorem}. 
	
	To prove Statement 2 of Theorem \ref{main-theorem}, suppose $\omega$ is an invariant ergodic Borel probability measure. By the Margulis-Ruelle inequality, 
	\[
	h_\omega(g) \leq \int_{M} \log\left| Dg|_{E^u(x)}\right|\,d\omega(x) = -\int_M \phi_1\,d\omega.
	\]
	Hence $h_\omega(f) + \int \phi_1\,d\omega \leq 0$. If $\omega$ has only 0 as a nonnegative Lyapunov exponent almost everywhere, then $\log\left|Dg|_{E^u(x)}\right| = 0$ $\omega$-a.e. The only point at which $\log\left|Dg|_{E^u(x)}\right|=0$ is at the singularities of $g$, so $\omega$ is a convex combination of the $\delta$-measures at the singularities. In this instance, we have $h_{\omega}(g) + \int \phi_1\,d\omega = 0$, so $P(\phi_1) = 0$, and $\omega$ is an equilibrium state for $\phi_1$. 
	
	
	On the other hand, part 1 of Proposition \ref{Young tower nuke} guarantees the existence of an SRB measure $\mu_1$ for $g$. In particular, $\mu_1$ is a smooth measure, so by the Pesin entropy formula, $h_\mu(f) + \int \phi_1 \,d\mu = 0$, so $\mu$ is also an equilibrium measure. Any other equilibrium measure with positive Lyapunov exponents also satisfies the entropy formula. By \cite{LY84}, such a measure is also an SRB measure, and by \cite{RH2TU}, this SRB measure is unique. This proves Statement 2. 
	
	
	Finally, to prove Statement 3 of Theorem \ref{main-theorem}, fix $t > 1$, and let $\omega$ be an ergodic measure for $g$. Again, by the Margulis-Ruelle inequality, $$h_\omega(g) \leq t\int \log\left|Dg|_{E^u(x)} \right|\,d\omega,$$ with equality if and only if $\int \log\left|Dg|_{E^u(x)} \right|\,d\omega = 0$. In particular, we have equality if and only if $\omega$ has zero Lyapunov exponents $\omega$-a.e. As we saw, the only measures satisfying this are convex combinations of $\delta$-measures at singularities, so $h_\omega(g) + \int \phi_t \,d\omega \leq 0$, with equality only for $\omega=\sum \lambda_i \delta_{x_i}$, with $\sum \lambda_i = 1$. Hence the only equilibrium states for $\phi_t$ with $t > 1$ are convex combinations of $\delta$-measures at singularities.

		\begin{Backmatter}
			
			\begin{ack}
				 I would like to thank Penn State University and the Anatole Katok Center for Dynamical Systems and Geometry where this work was done. I also thank my advisor, Y. Pesin, for introducing me to this problem and for valuable input over the course of my investigation into pseudo-Anosov systems. 
			\end{ack}

		\end{Backmatter}
		
	\end{document}